\numberwithin{equation}{section}
\theoremstyle{plain}
\newtheorem{Thm}{Theorem}[section]
\newtheorem{Lem}[Thm]{Lemma}
\newtheorem{Prop}[Thm]{Proposition}
\newtheorem{Question}{Open problem}
\newtheorem*{Conj}{Conjecture}
\newtheorem{Claim}[Thm]{Claim}
\theoremstyle{definition}
\newtheorem{Def}[Thm]{Definition}
\newtheorem{Rem}[Thm]{Remark}
\newtheorem*{Acknowledgements}{Acknowledgements}
\newcommand\n{\mathbf{n}}
\newcommand\m{\mathbf{m}}
\newcommand\sn{\partial\mathbf{n}}
\newcommand\bfc{\mathbf{c}}
\newcommand\bfC{\mathbf{C}}
\newcommand{\connect}{\xleftrightarrow}
\title{The incipient infinite cluster of the FK-Ising model in dimensions $d\geq 3$ and the susceptibility of the high-dimensional Ising model}
\begin{document}

\author{Romain Panis\footnotemark[1]\footnote{Institut Camille Jordan, \url{panis@math.univ-lyon1.fr}} }
\maketitle

\begin{abstract} We consider the critical FK-Ising measure $\phi_{\beta_c}$ on $\mathbb Z^d$ with $d\geq 3$. We construct the measure $\phi^\infty:=\lim_{|x|\rightarrow \infty}\phi_{\beta_c}[\:\cdot\: |\: 0\connect{}x]$ and prove it satisfies $\phi^\infty[0\connect{}\infty]=1$. This corresponds to the natural candidate for the \emph{incipient infinite cluster} measure of the FK-Ising model. Our proof uses a result of Lupu and Werner (Electron. Commun. Probab., 2016)
that relates the FK-Ising model to the random current representation of the Ising model, together with a \emph{mixing property} of random currents recently established by Aizenman and Duminil-Copin (Ann. Math., 2021).

We then study the susceptibility $\chi(\beta)$ of the nearest-neighbour Ising model on $\mathbb Z^d$. When $d>4$, we improve a previous result of Aizenman (Comm. Math. Phys., 1982) to obtain the existence of $A>0$ such that, for $\beta<\beta_c$,
\begin{equation*}
	\chi(\beta)= \frac{A}{1-\beta/\beta_c}(1+o(1)),
\end{equation*}
where $o(1)$ tends to $0$ as $\beta$ tends to $\beta_c$.
Additionally, we relate the constant $A$ to the incipient infinite cluster of the double random current. 
\end{abstract}

\section{Introduction}

In the context of independent bond percolation on the hypercubic lattice $\mathbb Z^d$ in dimensions $d\geq 2$, it is believed\footnote{The absence of infinite cluster at the critical point was successfully proved when $d=2$ \cite{Kesten1980criticalproba}, and when $d>10$ \cite{HaraSlade1990Perco,HaraDecayOfCorrelationsInVariousModels2008,FitznervdHofstad2017Perco-d>10}. Proving the corresponding result when $3\leq d \leq 10$ is one of the main open problems in percolation theory.} that there is no infinite cluster at the critical threshold $p_c$. However, the expected size of the cluster of the origin (or \emph{susceptibility}) is infinite (see \cite{AizenmanNewmanTreeGraphInequalities1984}). This implies that, from the perspective of an observer at the origin, increasingly large (but finite) clusters appear as larger length scales are considered.

The \emph{incipient infinite cluster} (IIC) was introduced in the physics literature \cite{AlexanderOrbach1982density,LeyvrazStanley1983class,StanleyConoglio1983fractal} as a way to analyse how this extensive structure ``emerges'' at the critical point. It can be viewed as a description of the not fully materialised infinite cluster. As explained in \cite{AizenmanNumberIncipient1997}, a rigorous definition of this object is not clear, and one may obtain different results depending on the chosen approach. 

Various methods have been suggested to construct the IIC. In a seminal paper, Kesten \cite{Kesten1986incipient} proposed two ways of constructing the IIC in the case of planar Bernoulli percolation. One consists in conditioning the critical measure $\mathbb P_{p_c}$ on the one-arm event $\{0\connect{}\Lambda_n^c\}$, where $\Lambda_n={\color{black}[-n,n]^d}\cap \mathbb Z^d$, and to send $n$ to infinity. The other one consists in conditioning $0$ to lie in an infinite cluster in the supercritical measure $\mathbb P_p$ with $p>p_c$, and to send the parameter $p$ to $p_c$. Both approaches were shown to converge (weakly) to the same percolation measure, which additionally satisfies almost surely the property that $0$ lies in an infinite cluster. The cluster of the origin under this measure is a natural candidate for the IIC. Chayes, Chayes, and Durett \cite{ChayesChayesDurrett1987inhomogeneouspercoIIC} proposed a different construction based on the introduction of a well-chosen \emph{non-homogeneous} bond parameter $p$. Subsequent works by Járai \cite{Jarai2003incipient,Jarai2003invasion} provided alternative but equivalent constructions of Kesten's infinite incipient cluster, thus comforting the robustness of the two dimensional IIC for independent bond percolation.

Building on the development of the \emph{lace expansion} \cite{BrydgesSpencerSAW,HaraSlade1990Perco,HaraSlade1990LatticeTrees,HaraSlade1992SAW}, van der Hofstad and Járai \cite{vdHofstadJarai2004IIC} proposed a construction of the IIC in the high-dimensional setup (i.e.~${d>10}$, see \cite{HaraSlade1990Perco,HaraDecayOfCorrelationsInVariousModels2008,FitznervdHofstad2017Perco-d>10}). In their approach, they constructed the limit of the sequence of measures $\mathbb P_{p_c}[\:\cdot\: |\: 0\connect{}x]$ as $|x|\rightarrow \infty$. They also proposed a \emph{subcritical} construction of the measure which complements Kesten's supercritical definition. These formulations are equivalent to Kesten's when $d=2$. An additional construction, involving conditioning on the one-arm event, was successfully achieved in \cite{HeydenreichvdHofstadHulshof2014IICrevisited} using the computation of the one-arm exponent performed by Kozma and Nachmias \cite{KozmaNachmias2011OneArm}. {\color{black}Recently, a construction of the IIC in dimensions $d>6$ was performed in \cite{chatterjee2025robust} using as main assumption an up-to-constant estimate on the critical two-point function.} Moreover, the lace expansion methods were used to construct the IIC in the setup of oriented percolation \cite{vdHofstadHollanderSlade2002IICorientedPerco}.
 
Fine properties of the IIC were derived using the aforementioned techniques: sub-diffusivity of the random-walk on the IIC has been studied in \cite{Kesten1986subdiffusiveonIIC,BarlowJaraoKumagaiSlade2008random,KozmaNachmias2009AlexanderOrbach}, the connection to invasion percolation has been explored in \cite{Jarai2003invasion,vdHofstadJarai2004IIC}, and the relation of the scaling limit of the high-dimensional IIC to the super-Brownian excursion has been investigated in \cite{HaraSlade2000scalingofIIC-I,HaraSlade2000scalingofIIC-II}.

Removing independency makes the problem more intricate. In the setup of the planar \emph{random cluster model} (or \emph{Fortuin--Kasteleyn (FK) percolation}) with cluster weight ${1< q \leq 4}$ ($q=1$ corresponds to Bernoulli percolation), the Russo--Seymour--Welsh (RSW) theory \cite{Russo1978note,SeymourWelsh1978percolation} is sufficient to extend Kesten's argument. See \cite{Kesten1986incipient,Jarai2003invasion,BasuSapo2017KestenIIC}. In higher dimensions, the lace expansion--- which is very useful to build the IIC for Bernoulli percolation--- has only been developed for $q\in\{1,2\}$. Setting $q=2$ corresponds to the so-called \emph{FK-Ising model}. However, the lace expansion of the Ising model is developed at the level of the spin model and very little information is available on the percolation side. In particular, it is not clear how to extend the approach of \cite{vdHofstadJarai2004IIC}. Nevertheless, other tools are available for the analysis of the case $q=2$.

{\color{black}
Finally, let us mention that the existence of the IIC was studied in other ``correlated'' settings including the percolation of the Gaussian free field on the cable graph, see \cite{cai2024incipient,werner2025switching}.
}
%
%Our attention is in the particular case $q=2$, which corresponds to the \emph{FK-Ising model}, and for which additional tools help the analysis.

%Apart from the independent setup, the IIC has also been constructed in the setup of the planar \emph{random cluster model} with cluster weight $1\leq q\leq 4$. In this setup, Russo--Seymour--Welsh (RSW) theory \cite{Russo1978note,SeymourWelsh1978percolation} is sufficient to extend Kesten's argument. See \cite{Kesten1986incipient,Jarai2003invasion,BasuSapo2017KestenIIC}.

The goal of this paper is twofold. First, we study the existence of the IIC measure of the FK-Ising model in dimensions $d\geq 3$. Using the relation of the FK-Ising model to the \emph{random current representation} of the Ising model established in \cite{lupu2016note} (see also \cite{AizenmanDuminilTassionWarzelEmergentPlanarity2019}), together with the mixing property recently obtained in \cite{AizenmanDuminilTriviality2021}, we construct in two different ways a natural candidate for the IIC measure of the FK-Ising model in dimensions $d\geq 3$. Our definitions are similar to the ones suggested in \cite{vdHofstadJarai2004IIC}. Along the way, we also construct the corresponding object for the percolation measure induced by random currents. 

Second, we study the high-dimensional susceptibility of the Ising model and obtain its exact asymptotics as $\beta\nearrow \beta_c$, i.e. we prove that the limit  $\lim_{\beta\nearrow \beta_c}\chi(\beta)(1-\beta/\beta_c)$ exists. This improves a result of Aizenman \cite{AizenmanGeometricAnalysis1982}, where up-to-constants estimates were derived.
 We then relate this limit to the IIC measure of the \emph{double} random current.
 
 We stress that our proofs do not rely on the lace expansion. 
\subsection{Definition of the model} The Ising model is one of the most studied models in statistical mechanics (see \cite{duminilreviewIsing} for a review). It is formally defined as follows: given $\Lambda\subset \mathbb Z^d$ finite, $\tau\in\{-1,0,+1\}^{\mathbb Z^d}$, and an inverse temperature $\beta\geq 0$, construct a probability measure on $\{-1,+1\}^\Lambda$ according to the formula
\begin{equation}
    \langle F(\sigma)\rangle_{\Lambda,\beta}^\tau:=\frac{1}{\mathbf{Z}_{\Lambda,\beta}^\tau}\sum_{\sigma\in \{-1,+1\}^\Lambda}F(\sigma)\exp\Big(\beta\sum_{\substack{x,y\in \Lambda\\ x\sim y}}\sigma_x\sigma_y+\beta\sum_{\substack{x\in \Lambda\\y\notin \Lambda\\x\sim y}}\sigma_x\tau_y\Big),
\end{equation}
where ${F:\{-1,+1\}^\Lambda\rightarrow \mathbb R}$, $\mathbf{Z}_{\Lambda,\beta}^\tau$ is the \emph{partition function} of the model which guarantees that $\langle 1\rangle_{\Lambda,\beta}^\tau=1$, and ${x\sim y}$ means that ${|x-y|_2=1}$ (where $|\cdot|_2$ is the $\ell^2$ norm on $\mathbb R^d$). This definition corresponds to the \emph{nearest-neighbour ferromagnetic} Ising model on $\Lambda$ with \emph{boundary condition} $\tau$. When $\tau\equiv 1$ (resp $\tau\equiv 0$), we write $\langle \cdot \rangle^\tau_{\Lambda,\beta}=\langle \cdot \rangle^+_{\Lambda,\beta}$ (resp. $\langle \cdot\rangle_{\Lambda,\beta}$).

It is a well-known fact (see \cite{GriffithsCorrelationsIsing1-1967,GriffithsCorrelationsIsing2-1967}) that the measures $\langle \cdot\rangle_{\Lambda,\beta}^+$ and $\langle \cdot\rangle_{\Lambda,\beta}$ admit weak limits when $\Lambda\nearrow\mathbb Z^d$. We respectively denote them by $\langle \cdot \rangle_{\beta}^+$ and $\langle \cdot\rangle_{\beta}$. When $d\geq 2$, the model undergoes a \emph{phase transition} for the vanishing of the \emph{magnetisation} at a critical parameter $\beta_c\in(0,\infty)$ defined by
\begin{equation}
	\beta_c:=\inf\Big\{\beta\geq 0, \: m^*(\beta):=\langle \sigma_0\rangle_\beta^+>0\Big\}.
\end{equation}
The phase transition is \emph{continuous} in the sense that $m^*(\beta_c)=0$, see \cite{AizenmanFernandezCriticalBehaviorMagnetization1986,WernerPercolationEtModeledIsing2009,AizenmanDuminilSidoraviciusContinuityIsing2015,DuminilLecturesOnIsingandPottsModels2019}.

The Ising model is classically related to a dependent percolation model: the \emph{FK-Ising model}, which we now define. For more information we refer to the monograph \cite{Grimmett2006RCM} or the lecture notes \cite{DuminilLecturesOnIsingandPottsModels2019}.

Define $\mathbb E:=\{\{x,y\}: x\sim y, \: x,y\in \mathbb Z^d\}$. Let $G=(V,E)$ be a finite subgraph of $\mathbb Z^d$. Fix $\xi\in \{0,1\}^{\mathbb E\setminus E}$. For a fixed percolation configuration $\omega \in \{0,1\}^E$, we let $k^\xi(\omega)$ be the number of connected components of the graph with vertex set $\mathbb Z^d$ and edge set $\omega \vee \xi$ that intersect $G$. The FK-Ising model at inverse temperature $\beta\geq 0$ with boundary condition $\xi$ is the measure on $\{0,1\}^E$ defined by
\begin{equation}
	\phi^\xi_{G,\beta}[\omega]:=\frac{2^{k^\xi(\omega)}\prod_{\{x,y\}\in E}(e^{2\beta}-1)^{\omega(\{x,y\})}}{\mathbf{Z}_{G,\beta,\xi}^{{\rm FK}}},
\end{equation}
where $\mathbf{Z}_{G,\beta,\xi}^{{\rm FK}}$ is the partition function of the model, which ensures that $\phi^\xi_{G,\beta}$ is a probability measure. We let $\phi^1_{G,\beta}$ and $\phi_{G,\beta}^0$ respectively denote the cases $\xi\equiv 1$ and $\xi\equiv 0$. It is classical that the Ising model and the FK-Ising model can be coupled together. This leads to the following formulas: for $x,y\in G$, 
\begin{equation}\label{eq: consequences of ESC}
	\langle \sigma_x\sigma_y\rangle_{G,\beta}=\phi_{G,\beta}^0[x\connect{}y], \quad \langle \sigma_x\sigma_y\rangle_{G,\beta}^+=\phi_{G,\beta}^1[x\connect{}y], \quad \langle \sigma_x\rangle_{G,\beta}^+=\phi^1_{G,\beta}[0\connect{}\partial G],
\end{equation}
where $\partial G$ is the vertex boundary of $G$ defined by $\partial G:=\{x\in G: \exists y \notin G, \: x\sim y\}$.
Once again, it is possible to construct (weak) limits of these measures when $G\nearrow\mathbb Z^d$. We denote them by $\phi^1_\beta$ and $\phi^0_\beta$. In particular, $m^*(\beta)=\phi^1_{\beta}[0\connect{}\infty]$. The FK-Ising model undergoes a phase transition for the existence of an infinite cluster at the parameter $\beta_c$ introduced above. That is, an alternative definition of $\beta_c$ is
\begin{equation}
	\beta_c=\inf\Big\{\beta \geq 0 : \phi^1_\beta[0\connect{}\infty]>0\Big\}.
\end{equation}
It is known (see \cite{BodineauTranslationGibbsIsing2006,AizenmanDuminilSidoraviciusContinuityIsing2015,RaoufiGibbsMeasures}) that $\phi_{\beta}^1=\phi_{\beta}^0$ for all $\beta>0$. We denote the common measure $\phi_\beta$. The relation described above ensures that there is no infinite cluster at criticality: $\phi_{\beta_c}[0\connect{}\infty]=0$. As for Bernoulli percolation, defining a notion of critical measure conditioned on the event $\{0\connect{}\infty\}$ is therefore not clear. 

\subsection{Construction of the incipient infinite cluster}

The construction of the IIC for the FK-Ising model when $d=2$ is a classical consequence of RSW theory, see \cite{DuminilKKMO2020rotationalinv} and references therein. Our first result provides two equivalent constructions of the IIC measure in dimensions $d\geq 3$. We begin by introducing some useful terminology. Let $\mathcal F_0$ be the set of cylinder events on $\{0,1\}^\mathbb E$, i.e. the set of \emph{local} events that are measurable with respect to the state of finitely many edges. Let $\mathcal F$ be the $\sigma$-algebra generated by $\mathcal F_0$.
\begin{Thm}\label{thm: first construction IIC fk} Let $d\geq 3$. For all $E\in \mathcal F_0$, the limit
\begin{equation}
	\lim_{|x|\rightarrow \infty}\phi_{\beta_c}[E\: |\: 0\connect{}x]
\end{equation}
exists independently of the manner in which $x$ goes to infinity. We denote it by $\phi^\infty[E]$. Moreover, $\phi^\infty$ extends to a probability measure on $\mathcal F$, that we still denote $\phi^\infty$, and which satisfies
\begin{equation}
	\phi^\infty[0\connect{}\infty]=1.
\end{equation}
\end{Thm}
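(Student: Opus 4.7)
The plan is to reduce the problem to a statement about random currents with sources $\{0,x\}$, for which the Aizenman--Duminil-Copin mixing estimate provides the required decorrelation, and then to transport the resulting convergence back to the FK-Ising side via the Lupu--Werner coupling.

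First, I would invoke the Lupu--Werner coupling, which relates (at $\beta_c$) the FK-Ising cluster structure to that of a random current. In particular, the switching lemma allows to rewrite the two-point function $\phi_{\beta_c}[0\connect{}x]$ as a normalised partition function of the random current with sources $\{0,x\}$, and Lupu--Werner makes the correspondence between events in the FK-Ising model and in the current explicit, up to an inert independent Bernoulli layer on the edges outside the support of the current. Consequently, computing $\phi_{\beta_c}[E\mid 0\connect{}x]$ for any $E\in\mathcal F_0$ reduces to studying local functionals of a random current whose sources include $\{0,x\}$.

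Next, I would apply the mixing property of random currents. In the form relevant here it states that for any cylinder event $A$ supported in a fixed finite region, uniformly in the direction of escape of $x$,
\begin{equation*}
\lim_{|x|\rightarrow\infty}\mathbf{P}^{\{0,x\}}_{\beta_c}\bigl[A\bigr]=\mathbf{P}^\infty[A],
\end{equation*}
where $\mathbf{P}^\infty$ is a well-defined limiting measure on current configurations with $\sn=\{0\}$ (the source $x$ having been pushed to infinity). Pulling this back through the Lupu--Werner coupling produces the pointwise limit $\phi^\infty[E]:=\lim_{|x|\rightarrow\infty}\phi_{\beta_c}[E\mid 0\connect{}x]$ for every $E\in\mathcal F_0$, independently of how $x\to\infty$. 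A standard application of Kolmogorov's extension theorem then upgrades $\phi^\infty$ into a probability measure on $\mathcal F$.

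Finally, to prove $\phi^\infty[0\connect{}\infty]=1$, I would approximate $\{0\connect{}\Lambda_n^c\}$ by the increasing family of cylinder events $A_{n,M}:=\{0\connect{}\Lambda_n^c\text{ through edges of }\Lambda_M\}$ for $M\geq n$. Whenever $|x|>M$, the conditioning $\{0\connect{}x\}$ forces $\{0\connect{}\Lambda_n^c\}$, so $\phi_{\beta_c}[A_{n,M}\mid 0\connect{}x]\to 1$ as $M$ and then $|x|$ tend to infinity; passing to the limit yields $\phi^\infty[0\connect{}\Lambda_n^c]=1$ for every $n$, and intersecting over $n$ gives the conclusion. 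The main obstacle I anticipate is to propagate the direction-independence of the mixing limit through the Lupu--Werner coupling: the extra Bernoulli layer can alter local connections in a way that is not a priori invariant under the direction of escape, and controlling this requires the mixing estimate to be quantitative and uniform enough in all directions.
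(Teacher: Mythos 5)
Your proposal is correct and follows essentially the same route as the paper: the Lupu--Werner coupling of Proposition \ref{prop: coupling fk rcr} combined with the mixing-based construction of the random current IIC (Theorem \ref{thm: IIC rcr}), plus a standard local approximation of $\{0\connect{}\infty\}$. The obstacle you anticipate at the end is not a genuine one: since the Bernoulli sprinkling is independent of both the current and $x$, one simply conditions on the states of the finitely many sprinkled edges in the support of $E$ and applies the (qualitative, direction-independent) convergence of the current measures for each fixed such configuration, so no quantitative or direction-uniform mixing estimate is needed beyond what Theorem \ref{thm: mixing for currents} already provides.
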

Note that the above measure is not translation invariant. The cluster of the origin under the measure $\phi^\infty$ is a natural candidate for the IIC of the FK-Ising model in dimensions $d\geq 3$. We now provide a \emph{subcritical} definition of $\phi^\infty$ that is inspired by \cite{vdHofstadJarai2004IIC}. 

For $\beta<\beta_c$, we define a probability measure $\mathbb Q_\beta$ on $\{0,1\}^{\mathbb E}$ by setting for $E\in \mathcal F$, 
\begin{equation}
	\mathbb Q_\beta[E]:=\frac{1}{\chi(\beta)}\sum_{x\in \mathbb Z^d}\phi_\beta[E, \:0\connect{}x],
\end{equation}
where $\chi(\beta):=\sum_{x\in \mathbb Z^d}\langle \sigma_0\sigma_x\rangle_\beta=\sum_{x\in \mathbb Z^d}\phi_\beta[0\connect{}x]$ is the \emph{susceptibility}.

\begin{Thm}\label{thm: subcritical approach to IIC FK Ising} Let $d\geq 3$. For all $E\in \mathcal F_0$, the limit
\begin{equation}
	\lim_{\beta\nearrow\beta_c}\mathbb Q_\beta[E]
\end{equation}
exists. We denote it by $\mathbb Q_{\beta_c}[E]$. Moreover, one has $\mathbb Q_{\beta_c}[E]=\phi^\infty[E]$.
\end{Thm}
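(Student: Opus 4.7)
The plan is to split the sum defining $\mathbb Q_\beta[E]$ at a scale $R$:
\begin{equation*}
\mathbb Q_\beta[E] = \underbrace{\frac{1}{\chi(\beta)}\sum_{|x|\leq R}\phi_\beta[E, 0\connect{}x]}_{=:A_R(\beta)} + \underbrace{\frac{1}{\chi(\beta)}\sum_{|x|>R}\phi_\beta[0\connect{}x]\,\phi_\beta[E\mid 0\connect{}x]}_{=:B_R(\beta)}.
\end{equation*}
Since $\phi_\beta[E, 0\connect{}x]\leq 1$, one has $A_R(\beta) \leq |\Lambda_R|/\chi(\beta)$, which vanishes as $\beta\nearrow\beta_c$ for every fixed $R$ because $\chi(\beta)\to\infty$. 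Applied to the increasing event $\{0\connect{}x\}$, the same bound gives $\tfrac{1}{\chi(\beta)}\sum_{|x|\leq R}\phi_\beta[0\connect{}x]\to 0$, so that $\tfrac{1}{\chi(\beta)}\sum_{|x|>R}\phi_\beta[0\connect{}x]\to 1$. Hence, if $\phi_\beta[E\mid 0\connect{}x]$ can be made uniformly close to $\phi^\infty[E]$ for $|x|$ large and $\beta$ close to $\beta_c$, then $B_R(\beta)\to \phi^\infty[E]$, and letting $\beta\nearrow\beta_c$ followed by $R\to\infty$ concludes.

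The key technical input is therefore the uniform mixing estimate
\begin{equation*}
	\sup_{\beta\in[\beta_*, \beta_c]}\sup_{|x|\geq R}\bigl|\phi_\beta[E\mid 0\connect{}x]-\phi^\infty[E]\bigr|\xrightarrow[R\to\infty]{}0 \quad \text{for some } \beta_*<\beta_c.
\end{equation*}
My approach is to revisit the proof of Theorem~\ref{thm: first construction IIC fk} and track the dependence on $\beta$. The Lupu--Werner coupling \cite{lupu2016note} is pointwise in $\beta$, and the random-current mixing property from \cite{AizenmanDuminilTriviality2021} driving Theorem~\ref{thm: first construction IIC fk} is controlled by diagrammatic quantities (two-point functions, bubble diagrams) which are monotone in $\beta$; since the subcritical source-free random current is stochastically dominated by its critical counterpart, these bounds should transfer uniformly to $\beta\in[\beta_*, \beta_c]$. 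A softer alternative is to combine the pointwise convergence $\phi_\beta[E\mid 0\connect{}x]\to\phi_{\beta_c}[E\mid 0\connect{}x]$ as $\beta\nearrow\beta_c$ (from the weak convergence $\phi_\beta\to\phi_{\beta_c}$ and continuity of the two-point function at criticality) with a diagonal-subsequence argument that identifies every weak subsequential limit of $\mathbb Q_\beta$ with $\phi^\infty$ via Theorem~\ref{thm: first construction IIC fk} applied to well-chosen pairs $(\beta_n, x_n)$ with $|x_n|\to\infty$.

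The main obstacle is precisely the uniformity of the mixing estimate: the mixing statements used to prove Theorem~\ref{thm: first construction IIC fk} are naturally phrased at a single inverse temperature, and some care is required to promote them to a one-sided neighbourhood of $\beta_c$. I expect that either the direct monotonicity route (exploiting that the ingredients of the switching-and-sprinkling argument bound the error by $\beta$-monotone quantities evaluated at $\beta_c$) or the subsequential route goes through with only routine additional work.
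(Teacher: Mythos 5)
Your outline coincides with the paper's: split the average over source positions, kill the contribution of $|x|\leq R$ using $\chi(\beta)\to\infty$ (the paper phrases this as $\chi_N(\beta)/\chi(\beta)\to 0$), and control the far part by showing that $\phi_\beta[E\mid 0\connect{}x]$ is close to $\phi^\infty[E]$ uniformly in large $|x|$ and in $\beta$ near $\beta_c$. The problem is that this uniform estimate, which you explicitly defer as "routine additional work", is precisely the substantive content, and neither of your two routes to it holds up. The paper does not "promote" a fixed-$\beta$ mixing statement: its Theorem \ref{thm: mixing for currents} is already a near-critical statement, valid uniformly for $\beta\in[\beta(\varepsilon),\beta_c]$, and proving it (Section \ref{section: mixing d=3}) is nontrivial -- in particular for $d=3$ the quantitative diagrammatic bounds of \cite{AizenmanDuminilTriviality2021} fail and the argument must be rebuilt around the qualitative input $m^*(\beta_c)=0$. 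So the "monotone diagrammatic quantities / stochastic domination of sourceless currents" heuristic neither reflects how the uniformity is actually obtained nor covers the full range $d\geq 3$ of the theorem. Moreover, mixing alone only says that $\phi_\beta[E\mid 0\connect{}x]$ barely depends on the far source $x$; to identify the common value with $\phi^\infty[E]$ (an object defined at $\beta_c$) one still needs continuity in $\beta$ at a \emph{fixed} far source, which the paper obtains at the random-current level from \cite[Theorem~2.3]{AizenmanDuminilSidoraviciusContinuityIsing2015} through the Lupu--Werner coupling; your claim that it follows from weak convergence of $\phi_\beta$ plus continuity of the two-point function glosses over the fact that $E\cap\{0\connect{}x\}$ is not a local event.

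The "softer" diagonal-subsequence alternative does not circumvent this either: $\mathbb Q_\beta[E]$ is an average of $\phi_\beta[E\mid 0\connect{}x]$ over $x$ with weights $\langle\sigma_0\sigma_x\rangle_\beta/\chi(\beta)$, so a subsequential limit of $\mathbb Q_{\beta_n}[E]$ is not captured by Theorem \ref{thm: first construction IIC fk} applied to a single well-chosen pair $(\beta_n,x_n)$; identifying every subsequential limit with $\phi^\infty[E]$ again requires exactly the uniform-in-$(x,\beta)$ control, i.e.\ the interchange of limits you are trying to avoid. The correct repair, which is what the paper does, is to invoke the near-critical statement \eqref{eq: IIC near critical} of Theorem \ref{thm: IIC rcr} (whose proof combines the uniform mixing of Theorem \ref{thm: mixing for currents}, the critical limit \eqref{eq: IIC critical}, and continuity of $\beta\mapsto\mathbf P^{0z}_\beta[\mathcal A]$ at $\beta_c$), and transfer it to the FK side by the sprinkling coupling of Proposition \ref{prop: coupling fk rcr}. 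As written, your proposal has a genuine gap at its key step.
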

It is not clear how to construct the IIC with Kesten's approach. We discuss this issue in Section \ref{section: open questions}.

Our strategy to prove these results is very similar to the one employed in \cite{vdHofstadJarai2004IIC}: we use a \emph{mixing property} to argue that for a local event $E$, the quantity $\phi_{\beta_c}[E\: |\: 0\connect{}x]$ ``does not depend much'' on $x$ when $x$ is sufficiently far away from the support of $E$. We are not able to directly prove such a mixing property at the level of the FK-Ising model, but we circumvent this difficulty by using an intermediate model related to the Ising model: the \emph{random current model}. We introduce this classical geometric representation of the Ising model in Section \ref{section: rcr}. In the recent breakthrough work of Aizenman and Duminil-Copin \cite{AizenmanDuminilTriviality2021}, a mixing property of the random current measure is derived and plays a pivotal role. In the same paper, the authors suggest that their result may be used to construct the IIC for the FK-Ising model in all dimensions $d\geq 3$. Although their mixing property is only proved in dimensions $d\geq 4$, they suggest a possible route (which relies on \cite{AizenmanDuminilSidoraviciusContinuityIsing2015}) to extend it to the three-dimensional case. We provide a full argument in Section \ref{section: mixing d=3}, see also Theorem \ref{thm: mixing for currents} for a statement. With this result, one may construct the IIC measure of the random current measure, see Theorem \ref{thm: IIC rcr}. 

The construction of the IIC for the FK-Ising measure then follows from an observation of Lupu and Werner \cite{lupu2016note} (see also \cite[Theorem~3.2]{AizenmanDuminilTassionWarzelEmergentPlanarity2019}) that identifies the law $\phi_{\beta_c}[\:\cdot\:|\:0\connect{}x]$ with that of a percolation configuration induced by a single current configuration on $\mathbb Z^d$ with sources $\{0,x\}$ \emph{sprinkled} by an independent (well-chosen) Bernoulli percolation. We refer to Proposition \ref{prop: coupling fk rcr} for a precise statement.

\subsection{Susceptibility of the high-dimensional Ising model}
We recall that the susceptibility is defined for $\beta<\beta_c$ by
\begin{equation}
	\chi(\beta)=\sum_{x\in \mathbb Z^d}{\color{black}\langle \sigma_0\sigma_x\rangle_{\beta}}.
\end{equation}
It is well-known that the nearest-neighbor Ising model introduced above is \emph{reflection positive}, see \cite{FrohlichSimonSpencerIRBounds1976,FrohlichIsraelLiebSimon1978,BiskupReflectionPositivity2009}. This property provides two tools that are fundamental in the study of the Ising model: the \emph{infrared bound} \cite{FrohlichSimonSpencerIRBounds1976,FrohlichIsraelLiebSimon1978} and the Messager--Miracle-Solé (MMS) inequalities \cite{MessagerMiracleSoleInequalityIsing}. These tools combined together imply the existence of $C=C(d)>0$ such that for all $\beta\leq \beta_c$, and all $x\in \mathbb Z^d\setminus\{0\}$,
\begin{equation}\label{eq: IRB}
	\langle \sigma_0\sigma_x\rangle_\beta\leq \frac{C}{|x|^{d-2}},
\end{equation}
see \cite{PanisTriviality2023} for more details. When $d>4$, a matching lower bound (with a different constant) was recently derived \cite{duminil2024new}.

In his seminal paper, Aizenman \cite{AizenmanGeometricAnalysis1982} used \eqref{eq: IRB} to obtain the existence of $C>0$ such that for all $\beta<\beta_c$,
\begin{equation}\label{eq: bounds on the susceptibility}
	\frac{(2d\beta_c)^{-1}}{1-\beta/\beta_c}\leq \chi(\beta)\leq \frac{C}{1-\beta/\beta_c}.
\end{equation}
In fact, for every $\varepsilon>0$, provided that $\beta$ is sufficiently close to $\beta_c$, $C$ can be chosen\footnote{It is possible to extract from their methods a slightly better bound, where the bubble is replaced by an \emph{open} bubble diagram. See the discussion below Theorem \ref{thm: value of the constant} and Remark \ref{rem: lower bound proba}.} \cite{AizenmanGrahamRenormalizedCouplingSusceptibilityd=4-1983} as
\begin{equation}
	C=\frac{1+(2d\beta_c)B(\beta_c)}{2d\beta_c}(1+\varepsilon),
\end{equation}
where
\begin{equation}
	B(\beta):=\sum_{x\in \mathbb Z^d}\langle \sigma_0\sigma_x\rangle_\beta^2
\end{equation}
is the so-called \emph{bubble diagram}. Note that it is finite when $d>4$ by \eqref{eq: IRB}. Our next result is a strengthening of \eqref{eq: bounds on the susceptibility}.

\begin{Thm}\label{thm: exact behaviour} Let $d>4$. There exists $A=A(d)>0$ such that, for $\beta<\beta_c$, 
\begin{equation}
	\chi(\beta)=\frac{A}{1-\beta/\beta_c}(1+o(1)),
\end{equation}
where $o(1)$ tends to $0$ as $\beta$ tends to $\beta_c$.
\end{Thm}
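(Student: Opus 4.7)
The plan is to show that $\chi'(\beta)/\chi(\beta)^2$ admits a finite positive limit as $\beta \nearrow \beta_c$, and then to integrate the identity $(1/\chi)' = -\chi'/\chi^2$. Differentiating the susceptibility and applying the Lebowitz decomposition
\begin{equation*}
\langle\sigma_0\sigma_x\sigma_u\sigma_v\rangle_\beta = \langle\sigma_0\sigma_x\rangle\langle\sigma_u\sigma_v\rangle + \langle\sigma_0\sigma_u\rangle\langle\sigma_x\sigma_v\rangle + \langle\sigma_0\sigma_v\rangle\langle\sigma_x\sigma_u\rangle + u_4(0,x,u,v),
\end{equation*}
with $u_4\leq 0$, the three ``Gaussian'' terms sum to $2d\chi(\beta)^2$ (using $\sum_{\{u,v\}\in\mathbb E}\langle\sigma_0\sigma_u\rangle_\beta=d\chi(\beta)$), yielding
\begin{equation*}
\frac{\chi'(\beta)}{\chi(\beta)^2}=2d-R(\beta),\qquad R(\beta):=-\frac{1}{\chi(\beta)^2}\sum_{\{u,v\}\in\mathbb E}\sum_{x\in \mathbb Z^d}u_4(0,x,u,v)\geq 0.
\end{equation*}

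The next step is to express $R(\beta)$ in terms of double random currents. Via the switching lemma, $-u_4(0,x,u,v)$ is a (weighted sum of) probabilities of the form $G_\beta(0,u)G_\beta(x,v)\,\mathbf P_\beta^{\{0,u\},\{x,v\}}[0\connect{}x\text{ in }\mathbf n_1+\mathbf n_2]$. Since for each nearest-neighbour unit vector $e$ the kernel $(u,x)\mapsto G_\beta(0,u)G_\beta(x,u+e)/\chi(\beta)^2$ is a probability density on $\mathbb Z^d\times\mathbb Z^d$, one may write
\begin{equation*}
R(\beta)=\sum_{e\,:\,|e|=1}\mathbb Q^{(2),e}_\beta\big[0\connect{}X\text{ in }\mathbf n_1+\mathbf n_2\big],
\end{equation*}
where $\mathbb Q^{(2),e}_\beta$ is the joint law of $((U,X),\mathbf n_1,\mathbf n_2)$ obtained by first sampling $(U,X)$ from the above density and then, conditionally, a double random current with sources $\{0,U\},\{X,U+e\}$.

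The heart of the proof is the convergence of $\mathbb Q^{(2),e}_\beta$ to a limiting measure $\mathbb Q^{(2),e}_{\beta_c}$ as $\beta\nearrow\beta_c$. Under $\mathbb Q^{(2),e}_\beta$, the marginals of both $U$ and $X-(U+e)$ are $G_\beta(0,\cdot)/\chi(\beta)$ and therefore diverge at criticality. After translating so that one source of each current is placed at the origin, the limit should describe a pair of random currents each carrying an IIC-type source at infinity. I would construct it by combining the single-source IIC of random currents (Theorem \ref{thm: IIC rcr}) with the mixing property (Theorem \ref{thm: mixing for currents}), applied first to one current and then, conditionally, to the other. The infrared bound \eqref{eq: IRB} and the finiteness of the bubble $B(\beta_c)$ (which holds for $d>4$) provide tree-diagram uniform dominations, in the spirit of Aizenman \cite{AizenmanGeometricAnalysis1982,AizenmanGrahamRenormalizedCouplingSusceptibilityd=4-1983}, allowing one to exchange the limit $\beta\to\beta_c$ with the summation $\sum_{u,x}$. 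This yields $R(\beta_c)\in(0,2d)$, hence $L:=\lim_{\beta\nearrow\beta_c}\chi'(\beta)/\chi(\beta)^2=2d-R(\beta_c)>0$.

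Integrating $(1/\chi)'=-\chi'/\chi^2$ from $\beta$ to $\beta_c$ and using $\chi(\beta_c)^{-1}=0$ then gives $\chi(\beta)^{-1}=L(\beta_c-\beta)(1+o(1))$, i.e.~$\chi(\beta)=A/(1-\beta/\beta_c)(1+o(1))$ with $A=1/(L\beta_c)=\beta_c^{-1}(2d-R(\beta_c))^{-1}$. This formula directly identifies $A$ with an IIC probability of the double random current. The main obstacle is the joint convergence of $\mathbb Q^{(2),e}_\beta$: one must extend the mixing estimate of \cite{AizenmanDuminilTriviality2021} from a single- to a genuinely double-source setting, localising both currents simultaneously at their distant sources, while maintaining uniform summability via the infrared bound to permit the interchange of sum and limit. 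Once these ingredients are in hand, the remainder of the argument is a dominated-convergence routine followed by integration.
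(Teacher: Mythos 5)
Your skeleton is the same as the paper's: differentiate $\chi^{-1}$, use Lebowitz plus the switching-lemma representation of $U_4$ to write $-\frac{\mathrm{d}\chi^{-1}}{\mathrm{d}\beta}$ as $2d$ times a source-averaged double-random-current probability, pass to a double-current IIC limit, and integrate. Two of the obstacles you flag are in fact already available: the ``genuinely double-source'' mixing and IIC convergence are exactly Theorem \ref{thm: mixing for currents} and Theorem \ref{thm: IIC rcr} with $t=2$ (including the near-critical averaged form \eqref{eq: IIC near critical}, which is precisely your exchange of the sum over sources with the limit $\beta\nearrow\beta_c$, for \emph{local} events). So that part of your plan goes through as stated.

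The genuine gap is the step you defer to a ``dominated-convergence routine'': the event $\{0\connect{}X \text{ in }\n_1+\n_2\}$ (equivalently $\{\mathbf C_{\n_1+\n_2}(0)\cap\mathbf C_{\n_1+\n_2}(\mathbf e_1)\neq\emptyset\}$) is \emph{not local}, while the IIC/mixing convergence only applies to cylinder events. Tree-diagram and infrared-bound dominations control the size of $R(\beta)$ (this is how Aizenman and Aizenman--Graham get up-to-constant bounds, and how one sees $R(\beta_c)<2d$), but they do not by themselves produce existence of the limit $\lim_{\beta\nearrow\beta_c}R(\beta)$: one must show that, uniformly in $\beta\le\beta_c$, the source-averaged probability of the non-local intersection event differs from that of its truncation to $\Lambda_k$ by an error $o_k(1)$. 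This is the paper's Lemma \ref{lem: localization step}, and it is the main technical content of the proof of Theorem \ref{thm: exact behaviour}: it is proved by exploring the backbone $\Gamma(\n_1)$, using the chain rule (Proposition \ref{prop: chain rule}), the switching principle to bound crossings of $\n_1+\n_2$ off the explored backbones by $\langle\sigma_s\sigma_t\rangle_\beta^2$, and the infrared bound \eqref{eq: IRB}; the resulting estimate $Ck^{-\eta}$ is where $d>4$ enters quantitatively. Without this localization your argument only yields upper and lower bounds of the Aizenman type, not the existence of $A$, so the proposal as written does not prove the theorem; the remaining assembly (positivity of the limit via the Aizenman--Graham bound, integration of $(1/\chi)'$, and the identification $A^{-1}=\beta_c L$) matches the paper.
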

{\color{black}This result is new when considering all dimensions $d>4$. In large enough dimensions (meaning $d\gg 4$), Theorem \ref{thm: exact behaviour} can be recovered from the lace expansion analysis of \cite{Sakai2007LaceExpIsing,Sakai2022correctboundsIsing}. However, the geometric interpretation of the constant $A$ presented below is novel, even in the setting where lace expansion applies\footnote{The existence of $A$ has also been established through the lace expansion for the self-avoiding walk model and Bernoulli percolation in sufficiently high dimensions, see \cite{HaraSlade1990Perco,slade2006lace,clisby2007self}. It would be interesting to see whether it is related the IIC of the corresponding models as in the case of the Ising model.}.}

We write the asymptotic in terms of $\beta/\beta_c$ rather than $\beta_c-\beta$ as it makes appear a constant $A(d)$ which should tend to $1$ as $d$ tends to $\infty$. This can be justified by looking at the Curie--Weiss model (which corresponds to $d=\infty$), for which it is known that $A=1$ (see for instance \cite[Chapter~2]{FriedliVelenikIntroStatMech2017}). Below, we provide a proof of this convergence result.

 In fact, it is possible to relate the constant $A$ obtained in Theorem \ref{thm: exact behaviour} to the IIC of a \emph{double} random current. Let $\mathbf{e}_1$ be the unit vector with first coordinate equal to $1$. Let $\mathbf{P}^{0\infty,\mathbf{e}_1\infty}$ be the IIC measure defined in Theorem \ref{thm: IIC rcr}. Under this measure, $0$ lies in an infinite cluster made of an infinite path emerging from the origin and a collection of finite loops attached to this path. Using the terminology introduced in Section \ref{section: rcr}, the \emph{sources} of $\n_1$ can be viewed as ``$\sn_1=\{0,\infty\}$''. A similar observation can be made for $\mathbf{e}_1$. Hence, under this measure, two infinite structures emerge from two neighbouring points of $\mathbb Z^d$. The next result relates $A$ to the probability that these clusters avoid each other. If $(\n_1,\n_2)\sim \mathbf P^{0\infty,\mathbf{e}_1\infty}$, we let $\mathbf C_{\n_1+\n_2}(0)$ (resp. $\mathbf C_{\n_1+\n_2}(\mathbf{e}_1)$) be the (infinite) cluster of $0$ (resp. $\mathbf{e}_1$) in the percolation configuration induced by $\n_1+\n_2$.
\begin{Thm}\label{thm: value of the constant} Let $d>4$. Let $A$ be the constant of Theorem \textup{\ref{thm: exact behaviour}}. Then,
\begin{equation}
	A^{-1}=(2d\beta_c)\cdot\mathbf P^{0\infty,\mathbf{e}_1\infty}[\mathbf C_{\n_1+\n_2}(0)\cap \mathbf C_{\n_1+\n_2}(\mathbf{e}_1)=\emptyset].
\end{equation}
\end{Thm}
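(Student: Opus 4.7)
The plan is to derive an exact asymptotic for $\chi'(\beta)/\chi(\beta)^2$ via the switching lemma, identify the limit as $(2d)$ times the disjoint-clusters probability under the double random current IIC, and then integrate; the constant $A^{-1}$ will emerge as $(2d\beta_c)$ times that probability.

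\textbf{Step 1: Differential identity.} Starting from
\begin{equation*}
\chi'(\beta) = \sum_{x\in \mathbb Z^d}\sum_{\{y,z\}: y\sim z}\bigl[\langle\sigma_0\sigma_x\sigma_y\sigma_z\rangle_\beta - \langle\sigma_0\sigma_x\rangle_\beta\langle\sigma_y\sigma_z\rangle_\beta\bigr],
\end{equation*}
I would first use translation invariance to pull out a factor $d$ and reduce the edge sum to the fixed edge $\{0,\mathbf{e}_1\}$. I then apply the switching lemma to the four-source double random current $\mathbf P^{\{0,\mathbf{e}_1,u,v\},\emptyset}_\beta$, whose partition of $\{0,\mathbf{e}_1,u,v\}$ in $\n_1+\n_2$ must fall into one of four patterns (all together, or one of three pairings). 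Consolidating the three pair-partition contributions yields for the four-point Ursell function the identity
\begin{equation*}
U_4(0,\mathbf{e}_1,u,v) = -2\langle\sigma_0\sigma_u\rangle_\beta\langle\sigma_{\mathbf{e}_1}\sigma_v\rangle_\beta\,\mathbf P^{\{0,u\},\{\mathbf{e}_1,v\}}_\beta[0\leftrightarrow \mathbf{e}_1 \text{ in }\n_1+\n_2].
\end{equation*}
Combined with the Griffiths II expansion of the truncated four-point function, and using that under $\mathbf P^{\{0,u\},\{\mathbf{e}_1,v\}}_\beta$ the connections $0\leftrightarrow u$ and $\mathbf{e}_1\leftrightarrow v$ are automatic --- so $\{0\centernot\leftrightarrow \mathbf{e}_1\} = \{\mathbf C(0)\cap \mathbf C(\mathbf{e}_1) = \emptyset\}$ --- one obtains the key formula
\begin{equation*}
\chi'(\beta) = 2d\chi(\beta)^2\sum_{u,v\in \mathbb Z^d}\frac{\langle\sigma_0\sigma_u\rangle_\beta}{\chi(\beta)}\frac{\langle\sigma_{\mathbf{e}_1}\sigma_v\rangle_\beta}{\chi(\beta)}\,\mathbf P^{\{0,u\},\{\mathbf{e}_1,v\}}_\beta\bigl[\mathbf C(0)\cap \mathbf C(\mathbf{e}_1) = \emptyset\bigr].
\end{equation*}

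\textbf{Step 2: Convergence to the IIC.} The right-hand side is the expectation of the disjoint-clusters probability under a size-biased law on the sources: independently, $U\sim \langle\sigma_0\sigma_\cdot\rangle_\beta/\chi(\beta)$ and $V\sim \langle\sigma_{\mathbf{e}_1}\sigma_\cdot\rangle_\beta/\chi(\beta)$. Since $\chi(\beta)\to \infty$ as $\beta\nearrow\beta_c$, this law concentrates on large $(|U|,|V|)$. Adapting the proof of Theorem \ref{thm: subcritical approach to IIC FK Ising} to the double random current setting (using the mixing property of Theorem \ref{thm: mixing for currents} and the IIC construction of Theorem \ref{thm: IIC rcr}), the size-biased measures on $(\n_1,\n_2)$ converge, as $\beta\nearrow \beta_c$, to $\mathbf P^{0\infty,\mathbf{e}_1\infty}$ on local events. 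Writing $\{\mathbf C(0)\cap \mathbf C(\mathbf{e}_1)=\emptyset\}=\bigcap_n\{0\centernot\leftrightarrow \mathbf{e}_1 \text{ inside }\Lambda_n\}$ as a decreasing intersection of cylinder events, and controlling the tail $\{0\leftrightarrow \mathbf{e}_1 \text{ via }\mathbb Z^d\setminus\Lambda_n\}$ via the infrared bound \eqref{eq: IRB} uniformly in $(\beta,u,v)$, one concludes
\begin{equation*}
\lim_{\beta\nearrow\beta_c}\frac{\chi'(\beta)}{2d\chi(\beta)^2} = \mathbf P^{0\infty,\mathbf{e}_1\infty}\bigl[\mathbf C(0)\cap \mathbf C(\mathbf{e}_1) = \emptyset\bigr].
\end{equation*}

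\textbf{Step 3: Integration and the main obstacle.} Since $\chi(\beta_c)=\infty$, one has $1/\chi(\beta) = \int_\beta^{\beta_c}\chi'(s)/\chi(s)^2\,ds$; inserting the limit of Step 2 gives $1/\chi(\beta) \sim (2d\beta_c)\,\mathbf P^{0\infty,\mathbf{e}_1\infty}[\mathbf C(0)\cap \mathbf C(\mathbf{e}_1)=\emptyset]\,(1-\beta/\beta_c)$ as $\beta\nearrow\beta_c$, from which $A^{-1}=(2d\beta_c)\,\mathbf P^{0\infty,\mathbf{e}_1\infty}[\mathbf C(0)\cap \mathbf C(\mathbf{e}_1)=\emptyset]$ follows. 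The hard part will be justifying the interchange of limits in Step 2: convergence of the size-biased double-current measures to $\mathbf P^{0\infty,\mathbf{e}_1\infty}$ is straightforward for cylinder events, but the disjoint-clusters event is tail, so one needs a quantitative upper bound on $\mathbf P^{\{0,u\},\{\mathbf{e}_1,v\}}_\beta[0\leftrightarrow \mathbf{e}_1 \text{ through }\mathbb Z^d\setminus\Lambda_n]$ that is uniform in $u,v$ and $\beta\leq \beta_c$. It is natural to obtain this via a Simon--Lieb-type finite-range splitting together with the infrared bound \eqref{eq: IRB}, which is precisely where the restriction $d>4$ enters.
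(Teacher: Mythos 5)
Your overall route is the same as the paper's: the derivative identity for $-\frac{\mathrm{d}\chi^{-1}}{\mathrm{d}\beta}$ via the switching-lemma representation of $U_4^\beta$ (the paper's \eqref{eq: proof thm 1 (3)}), convergence of the source-averaged double-current measures to $\mathbf P^{0\infty,\mathbf{e}_1\infty}$ on local events (the near-critical statement \eqref{eq: IIC near critical} of Theorem \ref{thm: IIC rcr}), and then integration of the derivative up to $\beta_c$. Steps 1 and 3 are fine. The problem is that the step you yourself flag as "the hard part" is precisely the main content of the paper's proof, namely Lemma \ref{lem: localization step}: a bound, uniform in $\beta\le\beta_c$ and in the (far-away) sources, of the form
\begin{equation*}
	\left|\overline{\mathbf P}_{\beta}^{(\ell)}[\mathbf{C}_{\n_1+\n_2}(0)\cap \mathbf{C}_{\n_1+\n_2}(\mathbf{e}_1)=\emptyset]-\overline{\mathbf P}_{\beta}^{(\ell)}[\mathcal A_k]\right|\leq Ck^{-\eta},
\end{equation*}
and your proposed substitute (a Simon--Lieb-type splitting plus the infrared bound applied to the tail event $\{0\leftrightarrow \mathbf{e}_1 \text{ through }\mathbb Z^d\setminus\Lambda_k\}$) does not work as stated. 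The difficulty is that under $\mathbf P_\beta^{0x,\mathbf{e}_1 y}$ with $|x|,|y|\ge \ell\ge 2k$, both clusters \emph{automatically} cross $\partial\Lambda_k$ (they contain the backbones to $x$ and $y$), so any estimate that reduces the bad event to "each cluster reaches far away" or to a one-arm/finite-range decoupling gives nothing; the issue is not that the clusters are large but that two macroscopic clusters must be shown to merge only with small probability once they have avoided each other in $\Lambda_k$.

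The paper handles this by a genuinely multi-scale argument that your sketch does not supply: decompose according to the vertex $u\in\Gamma(\n_1)$ where a connection to $\mathbf C(\mathbf{e}_1)$ attaches; for $u\notin\Lambda_m$ (with $m=k^\delta$) use the backbone exploration, the switching lemma, Griffiths' inequality and the chain rule to get the diagrammatic bound $\sum_{u\notin\Lambda_m}\langle\sigma_0\sigma_u\rangle_\beta\langle\sigma_{\mathbf{e}_1}\sigma_u\rangle_\beta\lesssim m^{-(d-4)}$ (this is exactly where $d>4$ enters, not through a Simon--Lieb splitting); for $u\in\Lambda_m$ introduce a second intermediate scale $M=m^A$ and split according to whether $\Gamma(\n_2)$ recrosses $\textup{Ann}(M,k)$ (bounded by the chain rule and \eqref{eq: IRB}) or there is a crossing of $\textup{Ann}(m,M)$ in $\n_1+\n_2\setminus(\overline\Gamma(\n_1)\cup\overline\Gamma(\n_2))$, which is bounded by $\sum_{s\in\partial\Lambda_M,\,t\in\partial\Lambda_m}\langle\sigma_s\sigma_t\rangle_\beta^2$ via the switching principle applied after conditioning on both backbones. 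None of these ingredients (backbone/chain rule, the conditional second-moment bound $\mathbf P[C(s,t)]\le\langle\sigma_s\sigma_t\rangle_\beta^2$, the three-scale case analysis) appears in your proposal, and without them the interchange of limits in your Step 2 is unjustified. So the skeleton is right, but the proof as written has a genuine gap exactly at the theorem's key technical point.
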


We obtain Theorems \ref{thm: exact behaviour} and \ref{thm: value of the constant} using the probabilistic interpretation of the derivative of the susceptibility obtained in \cite{AizenmanGeometricAnalysis1982}, see \eqref{eq: proof thm 1 (3)}. This result already makes appear the probability of occurrence of the event $\{\mathbf C_{\n_1+\n_2}(0)\cap \mathbf C_{\n_1+\n_2}(\mathbf{e}_1)=\emptyset\}$ under the following measure:
\begin{equation}
	\mathbf Q_\beta[\cdot]:=\frac{1}{\chi(\beta)^2}\sum_{x,y\in \mathbb Z^d}\langle \sigma_0\sigma_x\rangle_\beta\langle \sigma_{\mathbf{e}_1}\sigma_y\rangle_\beta\mathbf P_\beta^{0x,\mathbf{e}_1y}[\cdot],
\end{equation}
where $\mathbf P^{0x,\mathbf{e}_1y}_\beta$ samples independent currents $\n_1$ and $\n_2$ with respective source sets $\sn_1=\{0,x\}$ and $\sn_2=\{\mathbf{e}_1,y\}$. In Section \ref{section: iic}, we prove that the measure $\mathbf Q_\beta$ converges weakly as $\beta\nearrow \beta_c$ to the natural candidate for the IIC measure of the double random current. We also prove that convergence holds for \emph{local} events. However, the event of interest is not local. Our main contribution is a \emph{localization} of the event $\{\mathbf C_{\n_1+\n_2}(0)\cap \mathbf C_{\n_1+\n_2}(\mathbf{e}_1)=\emptyset\}$ which relies on the assumption $d>4$. See Lemma \ref{lem: localization step} for a precise statement.

\paragraph{Limit of $A(d)$ as $d$ tends to infinity.} Together with the results of \cite{AizenmanGrahamRenormalizedCouplingSusceptibilityd=4-1983}, we can prove (see Remark \ref{rem: lower bound proba}) that
\begin{equation}\label{eq: lower bound on P intro}
	\mathbf P^{0\infty,\mathbf{e}_1\infty}[\mathbf C_{\n_1+\n_2}(0)\cap \mathbf C_{\n_1+\n_2}(\mathbf{e}_1){\color{black}=\emptyset}]\geq \frac{1}{1+(2d\beta_c)B^{(\mathbf{e}_1)}(\beta_c)},
\end{equation}
where
\begin{equation}\label{eq: def open bubble}
	B^{(\mathbf{e}_1)}(\beta):=\sum_{x\in \mathbb Z^d}\langle \sigma_0\sigma_x\rangle_\beta \langle \sigma_{\mathbf{e}_1}\sigma_x\rangle_\beta
\end{equation}
is an \emph{open} bubble diagram. The lace expansion analysis of the Ising model (see \cite{Sakai2007LaceExpIsing,Sakai2022correctboundsIsing}) implies that
\begin{equation}\label{eq: lim beta_c d inf}
	\lim_{d\rightarrow \infty}2d\beta_c=1, \qquad \lim_{d\rightarrow \infty}B^{(\mathbf{e}_1)}(\beta_c)=0.
\end{equation}
Together with \eqref{eq: lower bound on P intro}, this gives
\begin{equation}\label{eq: lim P d inf}
	\lim_{d\rightarrow \infty}\mathbf P^{0\infty,\mathbf{e}_1\infty}[\mathbf C_{\n_1+\n_2}(0)\cap \mathbf C_{\n_1+\n_2}(\mathbf{e}_1)=\emptyset]=1.
\end{equation}
Plugging \eqref{eq: lim beta_c d inf} and \eqref{eq: lim P d inf} in Theorem \ref{thm: value of the constant} yields
\begin{equation}
	\lim_{d\rightarrow \infty}A(d)=1.
\end{equation}

{\color{black}
In Theorem \ref{thm: IIC rcr}, we will show the following weak convergence result:
\begin{equation}\label{eq:weakcvintro}
 \lim_{|x|,|y|\rightarrow \infty}\mathbf P_{\beta_c}^{0x,\mathbf{e}_1 y}=\mathbf P^{0\infty,\mathbf{e}_1\infty},
 \end{equation}
where $\mathbf P^{0x,\mathbf{e}_1y}_{\beta_c}$ is the double random current measure defined in Section \ref{section: rcr}. Since $\{\mathbf C_{\n_1+\n_2}(0)\cap \mathbf C_{\n_1+\n_2}(\mathbf{e}_1)=\emptyset\}$ is not a local event, the combination of \eqref{eq: lower bound on P intro} and \eqref{eq:weakcvintro} is not enough to get any lower bound on $\mathbf P^{0x,\mathbf{e}_1y}_{\beta_c}[\mathbf C_{\n_1+\n_2}(0)\cap \mathbf C_{\n_1+\n_2}(\mathbf{e}_1)=\emptyset]$ that is uniform in $x\in \mathbb Z^d\setminus\{\mathbf{e}_1\}$ and $y\in \mathbb Z^d\setminus\{0,x\}$. The following result provides a rather weak lower bound on the latter quantity (at least when $x$ and $y$ are not too small and not too close to each other).
\begin{Thm}\label{prop:uniform lower bound} Let $d>4$. There exist $c_0,N_0>0$ such that, for every $x,y\in \mathbb Z^d$ satisfying $|x|\wedge |y|\wedge |x-y|\geq N_0$, one has
\begin{equation}
\mathbf P^{0x,\mathbf{e}_1y}_{\beta_c}[\mathbf C_{\n_1+\n_2}(0)\cap \mathbf C_{\n_1+\n_2}(\mathbf{e}_1)=\emptyset]\geq c_0.
\end{equation}
\end{Thm}
} 
\subsection{Open problems}\label{section: open questions}

To conclude this introduction, we list a few questions which naturally arise from the above results.

\subsubsection{Other definitions of the IIC}

In view of the above, it is natural to ask whether Kesten's approach can be used to construct the IIC of the FK-Ising model. This leads to the following questions.

\begin{Question} Let $d\geq 3$. Show that one may define the measure $\phi^\infty$ by conditioning on the one-arm event:
\begin{equation}
	\phi^\infty=\lim_{n\rightarrow \infty}\phi_{\beta_c}[\:\cdot \: |\: 0\connect{} \partial \Lambda_n]
\end{equation}
\end{Question}
In \cite{HeydenreichvdHofstadHulshof2014IICrevisited} this construction was successfully achieved in the context of high-dimensional Bernoulli percolation using the computation of the one-arm exponent\footnote{The one-arm exponent $\rho$ of Bernoulli percolation is defined by $\mathbb P_{p_c}[0\connect{} \partial \Lambda_n]\asymp n^{-1/\rho}$. It is known to be equal to $1/2$ when $d>10$.} performed in \cite{KozmaNachmias2011OneArm} (to be more precise, they only need a lower bound on this exponent). To the best of our knowledge, a lower bound on this exponent, even in the mean-field regime, is not known (see \cite{SakaiHandaHeydenreich2019oneArmIsing} for an upper bound).
\begin{Question} Let $d\geq 3$. Show that one may define the measure $\phi^\infty$ from the supercritical regime:
\begin{equation}
	\phi^\infty=\lim_{\beta\searrow \beta_c}\phi_{\beta}[\:\cdot \: |\: 0\connect{} \infty]
\end{equation}
\end{Question}
The proof of Theorem \ref{thm: subcritical approach to IIC FK Ising} relies on the near-critical version of the mixing property of \cite{AizenmanDuminilTriviality2021}. Such a near-critical tool is not available on the supercritical side and this seems to be the main obstacle to follow this approach.

\subsubsection{Behaviour of the susceptibility when $d=4$}

In view of Theorem \ref{thm: exact behaviour}, it is natural to ask what is the behaviour of the susceptibility when $d=4$, where mean-field behaviour still holds. Renormalization group analysis and universality hypothesis (see for instance \cite{BauerschmidtBrydgesSlade2014Phi4fourdim} for the corresponding result in the setup of the weakly-coupled $\varphi^4$ model) suggest the following behaviour.
\begin{Conj} Let $d=4$. There exists $A>0$ such that
\begin{equation}
	\chi(\beta)=\frac{A|\log (1-\beta/\beta_c)|^{1/3}}{1-\beta/\beta_c}(1+o(1)),
\end{equation}
where $o(1)$ tends to $0$ as $\beta$ tends to $\beta_c$.
\end{Conj}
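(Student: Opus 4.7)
The plan is to extend the strategy developed for Theorem \ref{thm: value of the constant} and simultaneously track the logarithmic corrections expected at the upper critical dimension. As in the proof of Theorem \ref{thm: exact behaviour}, the starting point is the differential identity
\begin{equation*}
\frac{d\chi}{d\beta}(\beta) = 2d\,\chi(\beta)^2\,\mathbf Q_\beta\!\left[\mathbf{C}(0)\cap \mathbf{C}(\mathbf{e}_1)=\emptyset\right].
\end{equation*}
For $d>4$ the disintersection probability has a non-trivial limit $A^{-1}/(2d\beta_c)$ and integration immediately yields $\chi(\beta)\sim A/(1-\beta/\beta_c)$. When $d=4$ the open bubble $B^{(\mathbf{e}_1)}(\beta)$ diverges logarithmically by \eqref{eq: IRB}, so \eqref{eq: lower bound on P intro} only guarantees that $\mathbf Q_\beta[\mathbf{C}(0)\cap \mathbf{C}(\mathbf{e}_1)=\emptyset]$ tends to $0$ as $\beta\nearrow \beta_c$, and a more refined analysis is required.

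I would next set up a coupled system of estimates linking $\chi(\beta)$ to an effective coupling
\begin{equation*}
g(\beta):=(2d\beta_c)\,\mathbf Q_\beta\!\left[\mathbf{C}(0)\cap \mathbf{C}(\mathbf{e}_1)\neq \emptyset\right],
\end{equation*}
which should play the role of the dimensionless renormalised four-point coupling in the $\varphi^4$ picture. The first step is to derive a flow equation of the form $dg/d\beta \asymp -c\,g^2\,\chi(\beta)$, the Ising analogue of the one-loop $\beta$-function $\dot g = -c g^2$ of the $\varphi^4_4$ theory. Combined with the first identity, this yields a closed autonomous system whose leading-order solution is $g(\beta)\asymp |\log(1-\beta/\beta_c)|^{-1}$ and $\chi(\beta)\asymp |\log(1-\beta/\beta_c)|^{1/3}/(1-\beta/\beta_c)$; the exponent $1/3$ arises precisely from integrating this coupled flow.

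The hard part is the rigorous derivation of the $\beta$-function for $g$ with matching upper and lower bounds. The lower bound on the intersection probability in \eqref{eq: lower bound on P intro} is essentially a consequence of the tree-graph inequality, but producing a matching upper bound requires controlling non-trivial intersection events beyond the two-loop diagrams. A natural route is to iterate the random-current switching lemma together with the mixing property of \cite{AizenmanDuminilTriviality2021} to extract a renormalisation-group-like recursion, in the spirit of what Bauerschmidt--Brydges--Slade achieved for the weakly-coupled $\varphi^4$ model. The absence of a small parameter in the Ising setting is the principal obstacle: existing rigorous RG methods rely crucially on perturbative control of the running coupling, whereas here one would need to develop a non-perturbative scheme, perhaps exploiting the near-mean-field information conveyed by the IIC measures constructed above as an effective substitute for weak coupling.
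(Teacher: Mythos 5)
The statement you are addressing is a \emph{Conjecture} in the paper, not a theorem: no proof is given there, and the strongest rigorous information quoted is the Aizenman--Graham bound $\chi(\beta)\leq C|\log(1-\beta/\beta_c)|/(1-\beta/\beta_c)$, which is off from the conjectured exponent $1/3$ by a full power of the logarithm. Your proposal is likewise not a proof but a programme, and you say so yourself: the ``hard part'' (a rigorous flow equation with matching upper and lower bounds, i.e.\ a non-perturbative RG scheme with no small parameter) is exactly the open problem, and nothing in the paper --- the identity \eqref{eq: proof thm 1 (3)}, the localization Lemma \ref{lem: localization step} (which uses $d>4$ essentially, since the error $k^{-\eta}$ comes from $d-3>1$ and $d-4>0$), or the IIC construction --- supplies it. So there is a genuine gap: the central estimate is assumed, not proved.

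Beyond that, the heuristic as written is internally inconsistent. Your running coupling $g(\beta):=(2d\beta_c)\,\mathbf Q_\beta[\mathbf C(0)\cap\mathbf C(\mathbf e_1)\neq\emptyset]$ does not tend to $0$: the conjecture itself forces the \emph{dis}intersection probability to behave like $(2d\beta_c)^{-1}A^{-1}|\log(1-\beta/\beta_c)|^{-1/3}$ via \eqref{eq: proof thm 1 (3)}, so $g\to 2d\beta_c>0$, and the one-loop flow $dg/d\beta\asymp -c\,g^2\chi(\beta)$ is then impossible, since $\chi$ is not integrable near $\beta_c$ and the right-hand side would drive $g$ to $-\infty$. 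Replacing $g$ by the disintersection probability $p(\beta)$ does not repair it: if $p\asymp|\log\tau|^{-a}$ with $\tau=1-\beta/\beta_c$, then $dp/d\beta\asymp \tau^{-1}|\log\tau|^{-a-1}$ while $p^2\chi\asymp\tau^{-1}|\log\tau|^{-a}$, so the two sides of your flow equation can never balance for any $a$; in particular the claim that ``the exponent $1/3$ arises precisely from integrating this coupled flow'' is wrong. In the $\varphi^4$ picture the $1/3=(n+2)/(n+8)$ comes from the \emph{ratio} of the coefficient in the recursion for $\chi$ to the coefficient of the one-loop $\beta$-function, for a flow in the logarithm of the length scale (cut off at $\xi(\beta)$), not from the two $\beta$-derivative equations you wrote; obtaining that ratio rigorously for the nearest-neighbour Ising model is precisely what is missing, and is why the paper records this statement as a conjecture together with the suggestion (last open problem of Section \ref{section: open questions}) to study avoidance of the two IICs up to scale $\xi(\beta)^C$.
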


The best result regarding this conjecture was obtained in \cite{AizenmanGrahamRenormalizedCouplingSusceptibilityd=4-1983} where the authors obtained the existence of $C>0$ such that for all $\beta<\beta_c$,
\begin{equation}
	\frac{(2d\beta_c)^{-1}}{1-\beta/\beta_c}\leq \chi(\beta)\leq \frac{C|\log (1-\beta/\beta_c)|}{1-\beta/\beta_c}.
\end{equation}

In the four-dimensional case, Theorem \ref{thm: value of the constant} suggests that the divergence of $\chi(\beta)(1-\beta/\beta_c)$ as $\beta$ tends to $\beta_c$ is related to the fact that $\mathbf P^{0\infty,\mathbf{e}_1\infty}[\mathbf C(0)\cap \mathbf C(\mathbf{e}_1)=\emptyset]=0$. This hints that $d=4$ is (just like for random walks) the \emph{critical dimension} for the intersection of two neighbouring random current IICs. As we will see in the proof of Theorem \ref{thm: value of the constant}, when $d>4$ it is possible to relate the susceptibility to $\mathbf P^{0\infty,\mathbf{e}_1\infty}[\mathbf C(0)\cap \mathbf C(\mathbf{e}_1)\cap \Lambda_k=\emptyset]+o(1)$ where $o(1)$ is uniform in $\beta$ and tends to $0$ as $k$ tends to infinity. When $d=4$, this ``$o(1)$'' is not uniform in $\beta$ anymore. One potential idea to tackle this problem is to choose $k=k(\beta)$ as a function of $\beta$. It seems natural to choose $k(\beta)\geq\xi(\beta)$ where we recall that $\xi(\beta)$ is the so-called \emph{correlation length}, defined by
\begin{equation}\label{eq:correlation length}
	\xi(\beta)^{-1}:=\lim_{n\rightarrow \infty} -\frac{1}{n}\log \langle \sigma_0\sigma_{n\mathbf{e}_1}\rangle_\beta.
\end{equation}

\begin{Question} When $d=4$, relate the asymptotic behaviour of the susceptibility to the probability of avoidance of two neighbouring IICs up to distance $\xi(\beta)^C$, where  $C\geq 1$.
\end{Question}

\section{The random current representation}\label{section: rcr}
We begin by introducing the random current representation of the Ising model. For more information we refer to \cite{DuminilLecturesOnIsingandPottsModels2019} or \cite[Chapter~2]{panis2024applications}. \subsection{Definition and main properties}
Let $\Lambda$ be a finite subset of $\mathbb Z^d$.
\begin{Def} A \textit{current} $\n$ on $\Lambda$ is a function defined on the edge-set $E(\Lambda):=\lbrace \lbrace x,y\rbrace: \: x,y \in \Lambda \textup{ and }x\sim y \rbrace$ and taking its values in $\mathbb N=\lbrace 0,1,\ldots\rbrace$. We let $\Omega_\Lambda$ be the set of currents on $\Lambda$. The set of \emph{sources} of $\n$, denoted by $\sn$, is defined as
\begin{equation}
\sn:=\Big\{ x \in \Lambda \: : \: \sum_{y\sim x}\n_{x,y}\textup{ is odd}\Big\}.
\end{equation}
We also define 
\begin{equation}
w_\beta(\n):=\prod_{\substack{\lbrace x,y\rbrace\in E(\Lambda)}}\dfrac{\beta^{\n_{x,y}}}{\n_{x,y}!}.
\end{equation}
\end{Def}
It is classical \cite{AizenmanGeometricAnalysis1982,DuminilLecturesOnIsingandPottsModels2019} that the correlation functions of the Ising model are related to currents: for
$S\subset \Lambda$, if $\sigma_S:=\prod_{x\in S}\sigma_x$,

\begin{equation}\label{equation correlation rcr}
\left\langle \sigma_S\right\rangle_{\Lambda,\beta}=\dfrac{\sum_{\sn=S}w_\beta(\n)}{\sum_{\sn=\emptyset}w_\beta(\n)}.
\end{equation}

The trace of a current $\n$ naturally induces a percolation configuration $(\mathds{1}_{\n_{x,y}>0})_{\{x,y\}\in E(\Lambda)}$ on $\Lambda$. As it turns out, the connectivity properties of this percolation model play a crucial role in the analysis of the Ising model. This motivates the following terminology.

\begin{Def} Let $\n\in \Omega_{\Lambda}$ and $x,y\in \Lambda$.
\begin{enumerate}
    \item[$(i)$] We say that $x$ is \emph{connected} to $y$ in $\n$ and write $x\overset{\n}{\longleftrightarrow} y$, if there is a sequence of points $x_0=x,x_1,\ldots, x_m=y$ such that $\n_{x_i,x_{i+1}}>0$ for $0\leq i \leq m-1$.
    \item[$(ii)$] The \emph{cluster} of $x$, denoted by $\mathbf{C}_\n(x)$, is the set of points connected to $x$ in $\n$.
\end{enumerate}
\end{Def}

The main interest of the random current representation lies in the following combinatorial tool, called the \emph{switching lemma}, which was first introduced in \cite{GriffithsHurstShermanConcavity1970}. The probabilistic picture attached to it was developed in \cite{AizenmanGeometricAnalysis1982}.
\begin{Lem}[Switching Lemma]\label{lem: switching lemma}
For every $S_1,S_2\subset \Lambda$ and every function $F$ from $\Omega_\Lambda$ into $\mathbb R$,
\begin{multline}\label{eq: switching lemma}
\sum_{\substack{\n_1\in \Omega_\Lambda : \:\sn_1=S_1\\ \n_2\in \Omega_\Lambda :\: \sn_2=S_2}}F(\n_1+\n_2)w_\beta(\n_1)w_\beta(\n_2)\\=\sum_{\substack{\n_1\in \Omega_\Lambda: \:\sn_1=S_1\Delta S_2\\ \n_2\in \Omega_\Lambda :\: \sn_2=\emptyset}}F(\n_1+\n_2)w_\beta(\n_1)w_\beta(\n_2)\mathds{1}_{(\n_1+\n_2)\in \mathcal{F}_{S_2}},
\end{multline}
where $S_1\Delta S_2=(S_1\cup S_2)\setminus (S_1\cap S_2)$ is the symmetric difference of sets and $\mathcal{F}_{S_2}$ is given by
\begin{equation}\label{eq: event F_B}
\mathcal{F}_{S_2}=\lbrace \n \in \Omega_\Lambda : \: \exists \mathbf{m}\leq \n \:, \: \partial \mathbf{m}=S_2\rbrace.
\end{equation}
\end{Lem}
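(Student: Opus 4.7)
The plan is to reduce the switching lemma to a counting identity on a ``blown up'' multigraph and then to resolve that identity by a linear-algebraic involution. First I observe that, since $\partial$ is additive modulo $2$, every splitting $\n = \n_1 + \n_2$ satisfies $\sn = \sn_1 \Delta \sn_2$; only currents $\n$ with $\sn = S_1 \Delta S_2$ can contribute to either side of \eqref{eq: switching lemma}. Next, the elementary factorisation
\begin{equation*}
w_\beta(\n_1)\, w_\beta(\n_2) \;=\; w_\beta(\n) \prod_{e \in E(\Lambda)} \binom{\n_e}{(\n_1)_e},
\end{equation*}
which is an immediate consequence of $(\n_1)_e + (\n_2)_e = \n_e$, lets me regroup both sums in \eqref{eq: switching lemma} by $\n := \n_1+\n_2$ and cancel the common factor $F(\n)\, w_\beta(\n)$. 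The problem reduces to showing that for every $\n$ with $\sn = S_1\Delta S_2$,
\begin{equation*}
\sum_{\substack{\n_1+\n_2=\n \\ \sn_1=S_1,\, \sn_2=S_2}}\prod_{e}\binom{\n_e}{(\n_1)_e} \;=\; \mathds{1}_{\n\in\mathcal{F}_{S_2}}\sum_{\substack{\n_1+\n_2=\n \\ \sn_1=S_1\Delta S_2,\, \sn_2=\emptyset}}\prod_{e}\binom{\n_e}{(\n_1)_e}.
\end{equation*}

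The second step is to interpret each side as a cardinality. I would unfold $\n$ into a labelled multigraph $\hat{\n}$ containing $\n_e$ distinguishable copies of each edge $e \in E(\Lambda)$. A splitting $(\n_1,\n_2)$ of $\n$ is then in weight-preserving bijection with the choice of a subset $R \subset E(\hat{\n})$ (the edges belonging to $\n_1$), and the multinomial factor $\prod_e\binom{\n_e}{(\n_1)_e}$ counts exactly the number of such $R$ realising the prescribed edge-multiplicities $(\n_1)_e$. The constraint $\sn_1 = S_1$ translates into ``the set $\partial R$ of odd-degree vertices of the submultigraph $R$ equals $S_1$'', and because $\sn = S_1\Delta S_2$ is fixed, $\sn_2 = S_2$ is then automatic. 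The identity to prove therefore becomes the purely combinatorial statement
\begin{equation*}
\#\{R\subset E(\hat{\n}) : \partial R=S_1\} \;=\; \mathds{1}_{\n\in\mathcal{F}_{S_2}}\cdot\#\{R\subset E(\hat{\n}) : \partial R=S_1\Delta S_2\}.
\end{equation*}

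Finally, I would settle this by an involution. If $\n\in\mathcal{F}_{S_2}$, fix any $\m\leq\n$ with $\sm=S_2$ and promote it to a subset $\hat{\m}\subset E(\hat{\n})$ containing exactly $\m_e$ copies of each $e$; the map $R\mapsto R\Delta\hat{\m}$ is an involution on $\mathcal{P}(E(\hat{\n}))$ which, by $\mathbb F_2$-linearity of $\partial$, sends $\{R:\partial R=S_1\}$ bijectively onto $\{R:\partial R=S_1\Delta S_2\}$. If instead $\n\notin\mathcal{F}_{S_2}$, no splitting $(\n_1,\n_2)$ of $\n$ with $\sn_2=S_2$ can exist (the current $\n_2$ would itself witness $S_2$), so the left-hand side of \eqref{eq: switching lemma} also vanishes, matching the indicator on the right. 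The only genuinely delicate step is the multinomial bookkeeping of the first paragraph; once the factorisation of $w_\beta(\n_1)w_\beta(\n_2)$ is in place, the remainder is a standard $\mathbb F_2$-linear algebra argument on the labelled multigraph $\hat{\n}$, and I do not anticipate any serious obstacle.
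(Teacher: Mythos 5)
Your proof is correct, and it is exactly the classical argument behind this lemma: the paper does not reprove it but cites \cite{GriffithsHurstShermanConcavity1970,AizenmanGeometricAnalysis1982}, where the proof proceeds precisely as you do — factor $w_\beta(\n_1)w_\beta(\n_2)=w_\beta(\n)\prod_e\binom{\n_e}{(\n_1)_e}$, regroup by $\n=\n_1+\n_2$ (noting $\sn=\sn_1\Delta\sn_2$), reinterpret the multinomial weights as counting subsets $R$ of the labelled multigraph $\hat\n$ with $\partial R=S_1$, and conclude via the involution $R\mapsto R\Delta\hat\m$ for a fixed $\hat\m$ with $\partial\hat\m=S_2$, the indicator $\mathds{1}_{\n\in\mathcal F_{S_2}}$ accounting for the case where no such $\m$ exists. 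No gaps.
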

{\color{black}
\begin{Rem}\label{rem: fs is a perco event} Observe that the event $\mathcal F_{S_2}$ introduced in \eqref{eq: event F_B} is in fact a percolation event. Indeed, it is ex{black}actly the event that each connected component of the percolation configuration induced by $\n$ intersects $S_2$ an even number of times.
\end{Rem}
}
If $S\subset \Lambda$, define a probability measure $\mathbf{P}_{\Lambda,\beta}^S$ on $\Omega_\Lambda$ as follows: for every $\n\in \Omega_\Lambda$, 
\begin{equation}
    \mathbf{P}_{\Lambda,\beta}^S[\n]:=\mathds{1}_{\sn=S}\frac{w_\beta(\n)}{Z_{\Lambda,\beta}^S},
\end{equation}
where $Z_{\Lambda,\beta}^S:=\sum_{\sn=S}w_\beta(\n)$ is a normalisation constant. Moreover, for $S_1,\ldots,S_k\subset \Lambda$, define
\begin{equation}
    \mathbf{P}_{\Lambda,\beta}^{S_1,\ldots,S_k}:=\mathbf{P}_{\Lambda,\beta}^{S_1}\otimes\ldots\otimes \mathbf{P}_{\Lambda,\beta}^{S_k}.
\end{equation}
When $S=\lbrace x,y\rbrace$, we write $\mathbf{P}^{xy}_{\Lambda,\beta}$ instead of $\mathbf{P}^{\{x,y\}}_{\Lambda,\beta}$. If $\mathcal{E}\subset \Omega_\Lambda$, we also write $Z_{\Lambda,\beta}^S[\mathcal{E}]:=\sum_{\sn=S}w_\beta(\n)\mathds{1}_{\n\in \mathcal{E}}$. As proved in \cite{AizenmanDuminilSidoraviciusContinuityIsing2015}, if $S$ is a finite even (i.e. $|S|$ is even) subset of $\mathbb Z^d$, the sequence of probability measures $(\mathbf{P}_{\Lambda,\beta}^S)_{\Lambda\subset \mathbb Z^d}$ admits a weak limit as $\Lambda\nearrow \mathbb Z^d$ that we denote by $\mathbf{P}_{\beta}^S$.

The following mixing statement will be the main tool to construct the incipient infinite cluster.
\begin{Thm}[Mixing property of currents]\label{thm: mixing for currents} Let $d\geq 3$ and $s\geq 1$. Let $\varepsilon>0$. For every $n\geq 1$, there exist $N\geq n$ and $\beta(\varepsilon)<\beta_c$ such that, for every $1\leq t \leq s$, every $\beta\in [\beta(\varepsilon),\beta_c]$, every $x_i\in \Lambda_n$ and $y_i\notin \Lambda_N$ $(i\leq t)$, and every events $E$ and $F$ depending on the restriction of $(\n_1,\ldots,\n_s)$ to edges with endpoints within $\Lambda_n$ and outside $\Lambda_N$ respectively,
\begin{equation}\label{eq 1 mixing}
    \left|\mathbf{P}_\beta^{x_1 y_1,\ldots, x_t y_t,\emptyset,\ldots,\emptyset}[E\cap F]-\mathbf{P}_\beta^{x_1 y_1,\ldots, x_t y_t,\emptyset,\ldots,\emptyset}[E]\mathbf{P}_\beta^{x_1 y_1,\ldots, x_t y_t,\emptyset,\ldots,\emptyset}[F]\right|\leq \varepsilon.
\end{equation}
Furthermore, for every $x_1',\ldots, x_t'\in \Lambda_n$ and $y_1',\ldots,y'_t\notin \Lambda_N$, we have that
\begin{equation}\label{eq 2 mixing}
    \left|\mathbf{P}_\beta^{x_1 y_1,\ldots, x_t y_t,\emptyset,\ldots,\emptyset}[E]-\mathbf{P}_\beta^{x_1 y'_1,\ldots, x_t y'_t,\emptyset,\ldots,\emptyset}[E]\right|\leq \varepsilon,
\end{equation}
\begin{equation}\label{eq 3 mixing}
    \left|\mathbf{P}_\beta^{x_1 y_1,\ldots, x_t y_t,\emptyset,\ldots,\emptyset}[F]-\mathbf{P}_\beta^{x'_1 y_1,\ldots, x'_t y_t,\emptyset,\ldots,\emptyset}[F]\right|\leq \varepsilon.
\end{equation}
\end{Thm}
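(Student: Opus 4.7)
The plan is to treat the cases $d\geq 4$ and $d=3$ separately. For $d\geq 4$, the statement is essentially contained in \cite{AizenmanDuminilTriviality2021}: their mixing lemma is stated for currents with a single pair of sources, and the extension to $t$ pairs of sources distributed among $s$ currents (the remaining ones being sourceless) is a routine iteration using independence across the $s$ currents. One only has to verify that the near-critical uniformity in $\beta$ is preserved through the proof, which is the case because the quantitative bounds there are controlled by the bubble diagram, which is uniformly finite on $[0,\beta_c]$ in dimensions $d\geq 4$ by the infrared bound \eqref{eq: IRB}.

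The substantive task is the case $d=3$, flagged in \cite{AizenmanDuminilTriviality2021} as accessible via \cite{AizenmanDuminilSidoraviciusContinuityIsing2015}. The infrared bound $\langle\sigma_0\sigma_x\rangle_\beta\leq C|x|^{-1}$ still holds uniformly in $\beta\leq \beta_c$, but $B(\beta_c)=+\infty$, so the bubble-type estimates must be replaced by continuity-of-the-phase-transition inputs. The concrete input I would use is
\begin{equation*}
	\phi_{\beta_c}[0\connect{}\partial\Lambda_N]\xrightarrow[N\to\infty]{}0,
\end{equation*}
together with the analogous statement for sourceless random currents, transferred via the Lupu--Werner coupling of Proposition~\ref{prop: coupling fk rcr}.

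The mixing bound \eqref{eq 1 mixing} would then follow from a three-step decomposition. Step (i): use the switching lemma to trade each sourced current $\n_i$ with $\sn_i=\{x_i,y_i\}$ for a sourceless current conditioned on the connection event $\{x_i\overset{\n_i}{\longleftrightarrow}y_i\}$, expressing $E$ and $F$ as events on traces of sourceless currents. Step (ii): split each sourceless current into its restrictions to $\Lambda_n$, the annulus $\Lambda_N\setminus\Lambda_n$, and $\Lambda_N^c$, and use the above continuity input to show that, with $\mathbf{P}_\beta$-probability at least $1-\varepsilon$, no cluster in the annulus connects $\partial\Lambda_n$ to $\partial\Lambda_N$. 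Step (iii): on this favourable event, the restrictions of each sourceless current to $\Lambda_n$ and to $\Lambda_N^c$ are measurable with respect to disjoint edge sets, so $E$ and $F$ decorrelate. The source-invariance bounds \eqref{eq 2 mixing} and \eqref{eq 3 mixing} would follow from the same decomposition: step (ii) forces the backbone from $x_i$ to $y_i$ to cross the annulus cleanly, so replacing $y_i$ by $y_i'\in\Lambda_N^c$ affects only events supported in $\Lambda_N^c$, and symmetrically for $x_i\to x_i'$.

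The main obstacle is quantifying step (ii) uniformly for $\beta\in[\beta(\varepsilon),\beta_c]$. The continuity result of \cite{AizenmanDuminilSidoraviciusContinuityIsing2015} is a priori only a qualitative statement at $\beta=\beta_c$, and one needs it along a sequence approaching $\beta_c$ and for the random current rather than for the FK-Ising measure. The plan is to combine monotonicity in $\beta$ (to transfer control from $\beta_c$ to nearby subcritical parameters), the coupling of \cite{lupu2016note} (to pass from FK-Ising to sourceless currents), and the switching lemma (to handle the sourced currents with two sources), thereby upgrading the qualitative continuity input into the required near-critical quantitative mixing bound.
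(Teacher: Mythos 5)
There is a genuine gap: the decoupling mechanism in your steps (ii)--(iii) cannot work as stated. For each sourced current $\n_i$ with $\sn_i=\{x_i,y_i\}$, $x_i\in\Lambda_n$, $y_i\notin\Lambda_N$, the cluster of $x_i$ \emph{always} crosses the annulus $\Lambda_N\setminus\Lambda_n$ (the backbone joins $x_i$ to $y_i$), so the favourable event ``no cluster in the annulus connects $\partial\Lambda_n$ to $\partial\Lambda_N$'' has probability zero under the relevant measure, not $1-\varepsilon$. Moreover, even conditionally on a ``clean'' crossing, the restriction of $\n_i$ to $\Lambda_n$ and to $\Lambda_N^c$ are not independent: the global parity (source) constraint ties the two sides together through the crossing path, and the claim that replacing $y_i$ by $y_i'$ ``affects only events supported in $\Lambda_N^c$'' is essentially the statement \eqref{eq 2 mixing} you are trying to prove, so the argument is circular at that point. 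Your step (i) is also not what the switching lemma gives: a single current with $\sn=\{x,y\}$ is \emph{not} a sourceless current conditioned on $\{x\connect{}y\}$; that identification requires either adding an independent sourceless current or the Bernoulli sprinkling of Proposition \ref{prop: coupling fk rcr}, and neither version lets you dispose of the sources before decoupling.

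What the paper actually does (following \cite{AizenmanDuminilTriviality2021}) is to cut each sourced current at an intermediate point $u_i$ in a middle annulus rather than to decouple across an empty annulus: one introduces regular scales (Theorem \ref{thm: existence reg scales}), the weighted count $\mathbf U$ of points $u_i\in\mathbb A_{y_i}(2^k)$ on the cluster of $x_i$ in $\n_i+\n_i'$, proves $\mathbf E[\mathbf U]=1$ and a second-moment concentration bound (Lemma \ref{lem: concentration of U}), and then uses the switching principle on the good event $\mathcal G(\mathbf u)$ of Definition \ref{def: good event mixing} to rewrite $\mathbf P_\beta^{\mathbf{xy}}[E\cap F]$, up to error $\varepsilon$, as a mixture $\sum_{\mathbf u}\delta(\mathbf u,\mathbf x,\mathbf y)\,\mathbf P_\beta^{\mathbf{xu}}[E]\,\mathbf P_\beta^{\mathbf{uy}}[F]$; the source-invariance \eqref{eq 2 mixing} then comes from the near-constancy of the weights $\delta(\mathbf u,\mathbf x,\mathbf y)$ in $\mathbf y$, which rests on the regularity property $(\mathbf{P2})$, not on locality. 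The one place where $d=3$ requires new input is the bound $\mathbf P_\beta^{\mathbf{xu},\mathbf{uy}}[\mathcal G(\mathbf u)^c]\le\varepsilon/2$ (Lemma \ref{technical lemma mixing}), where annulus-crossing probabilities of backbones and of loop clusters are controlled by the chain rule, the coupling of Proposition \ref{prop: coupling fk rcr}, and $\phi_{\beta_c}[0\connect{}\infty]=0$. So you correctly identified the $d=3$ input (continuity of the phase transition transferred through the Lupu--Werner coupling), but the architecture that consumes it --- regular scales, the concentration of $\mathbf U$, and the switching-lemma surgery at an intermediate source $u$ --- is missing from your plan, and without it the proposed three-step decomposition does not yield \eqref{eq 1 mixing}--\eqref{eq 3 mixing}.
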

We will only use \eqref{eq 2 mixing}. However, for sake of completeness, we state and prove the stronger result.

Theorem \ref{thm: mixing for currents} was obtained by Aizenman and Duminil-Copin \cite{AizenmanDuminilTriviality2021} to derive the \emph{marginal triviality} of the scaling limits of Ising and $\varphi^4$ systems in dimension four. Their result is even stronger in the sense that it is \emph{quantitative}. However, their argument only works in dimensions $d\geq 4$. This is mainly due to the fact that the infrared bound \eqref{eq: IRB} is not sharp when $d=3$. As they mention (see discussion below \cite[Theorem~6.4]{AizenmanDuminilTriviality2021}), it is possible to extend their strategy to the three-dimensional case to the cost of losing the quantitative statement. We provide a full proof in Section \ref{section: mixing d=3}, which heavily relies on the fact that $m^*(\beta_c)=0$.

\subsection{Coupling with the FK-Ising model}
The following result can be found in \cite[Theorem~3.2]{AizenmanDuminilTassionWarzelEmergentPlanarity2019} {\color{black}(see also \cite{hansen2025uniform,hansen2025general})} and originates from the observation of \cite{lupu2016note}. {\color{black}Recall that the event $\mathcal F_S$ introduced in \eqref{eq: event F_B} can be viewed as a percolation event thanks to Remark \ref{rem: fs is a perco event}. This allows to make sense of $\phi^0_{G,\beta}[\:\cdot\:|\: \mathcal F_S]$ below.}

\begin{Prop}\label{prop: coupling fk rcr} Let $G=(V,E)$ be a subgraph of $\mathbb Z^d$ and let $S\subset G$ be a finite even subset of $G$. Let $\beta>0$. Let $\n$ be distributed according to $\mathbf P_{G,\beta}^{S}$. Let $(\omega_e)_{e\in \mathbb E}$ be an independent Bernoulli percolation with parameter $1-\exp(-\beta)$. For each $e\in E$, we define
\begin{equation}
	\eta_e:=\max(\mathds{1}_{\n_e>0},\omega_e).
\end{equation}
Then, the law of $\eta$ is exactly $\phi_{G,\beta}^0[\:\cdot\:|\: {\color{black}\mathcal F_S}]$. In particular, if $\mathcal A$ is an increasing event,
\begin{equation}
	\mathbf P^{S}_{G,\beta}[\mathcal A]\leq \phi_{G,\beta}^0[\mathcal A\: | \: \mathcal F_S].
\end{equation}
\end{Prop}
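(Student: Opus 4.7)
The plan is to compute the marginal law of $\eta$ directly under the coupling and compare it term-by-term with $\phi^0_{G,\beta}[\:\cdot\:|\:\mathcal F_S]$. Fix $\eta\in\{0,1\}^E$ and let $A:=\{e\in E:\eta_e=1\}$. For $(\n,\omega)$ to realise $\eta$ one needs $\n_e=0$ and $\omega_e=0$ for every $e\in E\setminus A$, and, for every $e\in A$, at least one of $\n_e>0$ or $\omega_e=1$.

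The first step is to integrate out the independent Bernoulli field $\omega$. Given an admissible $\n$ with $\mathrm{supp}(\n)\subset A$ and $\sn=S$, the conditional probability of realising $\eta$ equals $(e^{-\beta})^{|E\setminus A|}$, times a factor $1-e^{-\beta}$ for each $e\in A$ with $\n_e=0$ (the edges $e\in A$ with $\n_e>0$ contributing $1$). Summing yields
\begin{equation*}
\mathbb P[\eta]=\frac{(e^{-\beta})^{|E\setminus A|}}{Z_{G,\beta}^S}\sum_{\substack{\sn=S\\\mathrm{supp}(\n)\subset A}}w_\beta(\n)\prod_{e\in A,\,\n_e=0}(1-e^{-\beta}).
\end{equation*}

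The second step is to stratify the remaining sum by $T=\mathrm{supp}(\n)\subset A$ and, within $T$, by the parity pattern $T'\subset T$ on which $\n_e$ is odd. The source constraint becomes $\partial T'=S$, and evaluating $\n_e$ in each stratum via $\sum_{k\ge 1 \textup{ odd}}\beta^k/k!=\sinh\beta$ and $\sum_{k\ge 2\textup{ even}}\beta^k/k!=\cosh\beta-1$ turns the edge weights into $(\sinh\beta)^{|T'|}(\cosh\beta-1)^{|T\setminus T'|}$. Swapping the order of summation so that $T'$ is fixed and $T\supset T'$ varies, the decisive algebraic identity
\begin{equation*}
(1-e^{-\beta})+(\cosh\beta-1)=\cosh\beta-e^{-\beta}=\sinh\beta
\end{equation*}
collapses the edge-by-edge sum on $A\setminus T'$ and reduces the inner sum to $(\sinh\beta)^{|A|}\cdot\#\{T'\subset A:\partial T'=S\}$.

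The third step is to identify this cardinality: since $\{T'\subset A:\partial T'=S\}$ is either empty or a coset of the cycle space of $(V,A)$, its size equals $2^{|A|-|V|+k^0(\eta)}\mathds{1}_{\eta\in\mathcal F_S}$, where the indicator reflects the translation of $\mathcal F_S$ into the $\eta$-language as ``every connected component of $\eta$ contains an even number of vertices of $S$''. Using $(e^{-\beta})^{|E\setminus A|}(\sinh\beta)^{|A|}=(e^{-\beta})^{|E|}(e^\beta\cdot 2\sinh\beta)^{|A|}/2^{|A|}$ together with $e^\beta\cdot 2\sinh\beta=e^{2\beta}-1$ absorbs all $|V|,|E|$-dependent prefactors into the normalisation and leaves $\mathbb P[\eta]\propto 2^{k^0(\eta)}(e^{2\beta}-1)^{|A|}\mathds{1}_{\eta\in\mathcal F_S}$, which is exactly $\phi^0_{G,\beta}[\eta\mid\mathcal F_S]$. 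The increasing-event inequality then follows from the pointwise domination $\mathds{1}_{\n_e>0}\le\eta_e$, which forces $\{\mathds{1}_{\n>0}\in\mathcal A\}\subset\{\eta\in\mathcal A\}$. The main places where care is needed are the combinatorial interpretation of $\mathcal F_S$ and the cycle-space count; the algebraic heart, however, is really the one-line identity $\cosh\beta-e^{-\beta}=\sinh\beta$.
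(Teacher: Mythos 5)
Your proposal is correct. Note that the paper does not prove Proposition \ref{prop: coupling fk rcr} at all: it quotes it from \cite[Theorem~3.2]{AizenmanDuminilTassionWarzelEmergentPlanarity2019}, which itself goes back to \cite{lupu2016note}, so what you have written is essentially a self-contained reproduction of the standard proof of that cited result. The key steps all check out: integrating out $\omega$ gives the factor $(e^{-\beta})^{|E\setminus A|}\prod_{e\in A,\,\n_e=0}(1-e^{-\beta})$; stratifying by the support $T$ and the odd set $T'$ correctly produces $(\sinh\beta)^{|T'|}(\cosh\beta-1)^{|T\setminus T'|}$ with the source constraint $\partial T'=S$; the edge-by-edge collapse via $(\cosh\beta-1)+(1-e^{-\beta})=\sinh\beta$ makes the inner sum $(\sinh\beta)^{|A|}$ independently of $T'$; and the count $\#\{T'\subset A:\partial T'=S\}=2^{|A|-|V|+k^0(\eta)}\mathds{1}_{\eta\in\mathcal F_S}$ is the correct coset-of-the-cycle-space argument, provided one observes (as you implicitly do) that $k^0(\eta)$ in the paper's convention with free boundary condition counts exactly the connected components of $(V,\eta)$, isolated vertices included, and that nonemptiness is equivalent to every $\eta$-cluster meeting $S$ in an even number of points, i.e.\ to $\eta\in\mathcal F_S$. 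Since the leftover prefactor $(e^{-\beta})^{|E|}2^{-|V|}/Z^S_{G,\beta}$ does not depend on $\eta$ and both laws are probability measures on $\{0,1\}^E$, the identification with $\phi^0_{G,\beta}[\:\cdot\:|\:\mathcal F_S]$ is complete, and the monotone-coupling inequality for increasing events follows as you state from $\mathds{1}_{\n_e>0}\leq\eta_e$.
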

We will apply the above statement to the special cases $S=\emptyset$ or $S=\{x,y\}$ for $x,y\in G$. In the former case the law of $\eta$ is $\phi^0_{G,\beta}$, while in the latter case it is $\phi^0_{G,\beta}[\:\cdot\:|\:x\connect{}y]$, i.e. the FK-Ising measure conditioned on the event that $x$ is connected to $y$. Finally, when $G=\mathbb Z^d$, by the results recalled above, $\phi^0_{G,\beta}=\phi_\beta$.
\section{The incipient infinite cluster}\label{section: iic}
In this section, we construct the IIC measure for both random currents and the FK-Ising model.
\subsection{Construction of the IIC measure for random currents}
We will prove the following result.
\begin{Thm}[The IIC measure of the random current]\label{thm: IIC rcr}Let $d\geq 3$ and $s\geq 1$. Let $1\leq t \leq s$. For all $x_1,\ldots,x_t\in \mathbb Z^d$ there exists a measure $\mathbf P^{x_1\infty,\ldots,x_t\infty,\emptyset,\ldots,\emptyset}$ (where $\emptyset$ appears $s-t$ times) on $\Omega_{\mathbb Z^d}$ such that, for all local events $\mathcal A\subset (\Omega_{\mathbb Z^d})^s$,
\begin{equation}\label{eq: IIC critical}
	\lim_{|y_1|,\ldots, |y_t|\rightarrow \infty}\mathbf P_{\beta_c}^{x_1y_1,\ldots,x_ty_t,\emptyset,\ldots,\emptyset}[\mathcal A]=\mathbf P^{x_1\infty,\ldots,x_t\infty,\emptyset,\ldots,\emptyset}[\mathcal A],
\end{equation}
regardless of the manner in which $y_1,\ldots,y_t$ are sent to infinity.
Moreover, one also has,
\begin{equation}\label{eq: IIC near critical}
	\lim_{\beta\nearrow \beta_c}\frac{1}{\chi(\beta)^t}\sum_{y_1,\ldots,y_t\in \mathbb Z^d}\langle \sigma_{x_1}\sigma_{y_1}\rangle_\beta\ldots \langle \sigma_{x_t}\sigma_{y_t}\rangle_\beta\mathbf P_{\beta}^{x_1y_1,\ldots,x_ty_t,\emptyset,\ldots,\emptyset}[\mathcal A]=\mathbf P^{x_1\infty,\ldots,x_t\infty,\emptyset,\ldots,\emptyset}[\mathcal A].
\end{equation}
Finally, the measure $\mathbf P^{x_1\infty,\ldots,x_t\infty,\emptyset,\ldots,\emptyset}$ satisfies,
\begin{equation}
	\mathbf P^{x_1\infty,\ldots,x_t\infty,\emptyset,\ldots,\emptyset}[x_i\connect{\n_i\:}\infty, \: \forall 1\leq i \leq t]=1.
\end{equation}
\end{Thm}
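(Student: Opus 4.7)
The engine is the mixing statement \eqref{eq 2 mixing} of Theorem \ref{thm: mixing for currents}. Fix a cylinder event $\mathcal A$ whose support lies in $\Lambda_n$. Given $\varepsilon>0$, let $N=N(n,\varepsilon)$ and $\beta(\varepsilon)<\beta_c$ be as provided by that theorem (applied with $t$ non-empty source sets, the remaining $s-t$ being empty). For the first limit \eqref{eq: IIC critical}, \eqref{eq 2 mixing} at $\beta=\beta_c$ yields, for any $y_1,\ldots,y_t,y_1',\ldots,y_t'\notin \Lambda_N$,
\[
\bigl|\mathbf{P}_{\beta_c}^{x_1y_1,\ldots,x_ty_t,\emptyset,\ldots,\emptyset}[\mathcal A]-\mathbf{P}_{\beta_c}^{x_1y_1',\ldots,x_ty_t',\emptyset,\ldots,\emptyset}[\mathcal A]\bigr|\le \varepsilon.
\]
Hence the family is Cauchy as $|y_1|,\ldots,|y_t|\to \infty$, independently of the manner in which sources are sent to infinity, and the limit defines a finitely additive set function on cylinder events, which I denote $\mathbf{P}^{x_1\infty,\ldots,x_t\infty,\emptyset,\ldots,\emptyset}[\mathcal A]$. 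To extend it to a genuine probability measure on $\mathcal F$, I would invoke tightness of the marginals: the expected value of $\n_i(e)$ under $\mathbf P_{\beta_c}^{x_1y_1,\ldots,x_ty_t,\emptyset,\ldots}$ is bounded uniformly in the source locations (one can dominate single-edge current values by those of a sourceless current via the switching lemma), so $(\n_1,\ldots,\n_s)$ lives in a tight family on $\mathbb N^{E(\mathbb Z^d)s}$ with the product topology, and the Kolmogorov extension theorem applies.

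\textbf{Near-critical limit.} Write the left-hand side of \eqref{eq: IIC near critical} as $\mathbf Q_\beta^{(t)}[\mathcal A]$. Since $\sum_{y}\langle\sigma_{x_i}\sigma_y\rangle_\beta=\chi(\beta)$, the factor $\prod_i \langle\sigma_{x_i}\sigma_{y_i}\rangle_\beta/\chi(\beta)^t$ is a probability weight on $(\mathbb Z^d)^t$, and $\mathbf Q_\beta^{(t)}[\mathcal A]$ is the corresponding weighted average of $\mathbf P_\beta^{x_1y_1,\ldots,x_ty_t,\emptyset,\ldots}[\mathcal A]$. The contribution of tuples with some $y_i\in \Lambda_N$ is bounded by $t|\Lambda_N|/\chi(\beta)\to 0$ (using $\langle\sigma_{x_i}\sigma_y\rangle_\beta\le 1$ and $\chi(\beta)\to\infty$ as $\beta\nearrow\beta_c$). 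On the complement, fix any reference $y_1^*,\ldots,y_t^*\notin \Lambda_N$; equation \eqref{eq 2 mixing}, valid uniformly in $\beta\in[\beta(\varepsilon),\beta_c]$, gives
\[
\bigl|\mathbf P_\beta^{x_1y_1,\ldots,x_ty_t,\emptyset,\ldots}[\mathcal A]-\mathbf P_\beta^{x_1y_1^*,\ldots,x_ty_t^*,\emptyset,\ldots}[\mathcal A]\bigr|\le \varepsilon,
\]
and the right-hand reference quantity converges as $\beta\nearrow\beta_c$ to $\mathbf P_{\beta_c}^{x_1y_1^*,\ldots,x_ty_t^*,\emptyset,\ldots}[\mathcal A]$ by standard continuity of the random current law in $\beta$ for local events with fixed finite sources (finite-volume measures are analytic in $\beta$ and the infinite-volume limits inherit continuity). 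Combining with the first step, this last quantity is within $\varepsilon$ of $\mathbf P^{x_1\infty,\ldots,x_t\infty,\emptyset,\ldots}[\mathcal A]$. Letting $\beta\nearrow\beta_c$ then $\varepsilon\to 0$ yields \eqref{eq: IIC near critical}.

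\textbf{Connectivity to infinity.} For every $n\ge 1$ and every $i\le t$, the event $\mathcal E_n^{(i)}:=\{x_i\connect{\n_i}\mathbb Z^d\setminus \Lambda_n\}$ is local (it is measurable with respect to the currents restricted to edges inside $\Lambda_{n+1}$). For $y_i\notin \Lambda_n$ the source constraint $\sn_i=\{x_i,y_i\}$ forces the existence of an odd-weight path in $\n_i$ from $x_i$ to $y_i$, hence $\mathbf P_{\beta_c}^{x_1y_1,\ldots,x_ty_t,\emptyset,\ldots}[\mathcal E_n^{(i)}]=1$. Passing to the limit via \eqref{eq: IIC critical} gives $\mathbf P^{x_1\infty,\ldots,x_t\infty,\emptyset,\ldots}[\mathcal E_n^{(i)}]=1$ for every $n$, and monotone convergence as $n\to\infty$ yields the desired $\mathbf P^{x_1\infty,\ldots,x_t\infty,\emptyset,\ldots}[x_i\connect{\n_i}\infty]=1$ for each $i\le t$.

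\textbf{Main obstacle.} Essentially everything rests on the mixing statement of Theorem \ref{thm: mixing for currents} being available uniformly in $\beta\in[\beta(\varepsilon),\beta_c]$, in particular in dimension three. That input is non-trivial and is the content of Section \ref{section: mixing d=3}; once it is granted, the above argument is mostly an exercise in extraction. The only additional subtlety is the tightness needed to extend the cylinder-event limit to a bona fide measure on $\mathcal F$, which requires a uniform bound on expected edge-currents independent of the source locations.
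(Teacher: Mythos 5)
Your argument follows the paper's proof essentially verbatim: the Cauchy property at $\beta_c$ obtained from \eqref{eq 2 mixing}, and, for \eqref{eq: IIC near critical}, the combination of uniform-in-$\beta$ mixing, continuity in $\beta$ at a fixed reference source, and the negligible weight of tuples with some $y_i\in\Lambda_N$ (the paper bounds this by $\chi_N(\beta)/\chi(\beta)\to 0$, which matches your $|\Lambda_N|/\chi(\beta)$ bound); your explicit treatment of the connectivity statement and of the extension to a genuine measure only fills in steps the paper leaves implicit. One caveat: the left-continuity of $\beta\mapsto\mathbf P_\beta^{x_1y_1^*,\ldots,x_ty_t^*,\emptyset,\ldots,\emptyset}[\mathcal A]$ at $\beta_c$ does not follow from finite-volume analyticity plus passage to the infinite-volume limit as you suggest; it is the non-trivial input of \cite[Theorem~2.3]{AizenmanDuminilSidoraviciusContinuityIsing2015}, which is exactly what the paper invokes at this point.
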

{\color{black}
\begin{Rem} In a sampling $(\n_1,\ldots,\n_s)$ of the measure $\mathbf P^{x_1\infty,\ldots,x_t\infty,\emptyset,\ldots,\emptyset}$ introduced above, each $x_i$ ($1\leq i \leq t$) is a source of $\n_i$. It is paired with a ``source at infinity''.
\end{Rem}
}
\begin{proof}[Proof of Theorem \textup{\ref{thm: IIC rcr}}] We let $\mathcal A$ be a local event. Let $n\geq 1$ such that $\mathcal A$ is measurable in terms of the edges with endpoints within $\Lambda_n$.

Let $\varepsilon>0$. Using Theorem \ref{thm: mixing for currents} , we find $N$ large enough such that for all $y_1,y_1',\ldots,y_t,y_t'\notin \Lambda_N$,
\begin{equation}
	\Big|\mathbf P_{\beta_c}^{x_1y_1,\ldots,x_ty_t,\emptyset,\ldots,\emptyset}[\mathcal A]-\mathbf P_{\beta_c}^{x_1y_1',\ldots,x_ty_t',\emptyset,\ldots,\emptyset}[\mathcal A]\Big|\leq \varepsilon.
\end{equation}
This shows that the sequence $(\mathbf P^{x_1y_1,\ldots,x_ty_t,\emptyset,\ldots,\emptyset}_{\beta_c}[\mathcal A])_{y_1,\ldots,y_t}$ is Cauchy\footnote{By this, we mean that for any enumerations $r^{(i)}$, ${1\leq i \leq t}$, of $\mathbb Z^d$, the sequence $(\mathbf P^{x_1r^{(1)}_n,\ldots,x_tr^{(t)}_n,\emptyset,\ldots,\emptyset}_{\beta_c}[\mathcal A])_{n\geq 1}$ is Cauchy, and all the limits are the same.} and thus admits a limit that we denote by $\mathbf P^{x_1\infty,\ldots,x_t\infty,\emptyset,\ldots,\emptyset}[\mathcal A]$. This allows to define the measure $\mathbf P^{x_1\infty,\ldots,x_t\infty,\emptyset,\ldots,\emptyset}$ on the cylinder $\sigma$-algebra of $\Omega_{\mathbb Z^d}$.

We turn to the proof of \eqref{eq: IIC near critical}. Let $\varepsilon>0$. By Theorem \ref{thm: mixing for currents} we have the existence of $N$ large enough and $\beta(\varepsilon)$ close enough to $\beta_c$, such that for all $\beta\in[\beta(\varepsilon),\beta_c]$, for all $y_1,y_1',\ldots,y_t,y_t'\notin \Lambda_N$,
\begin{equation}
	\Big|\mathbf {P}_{\beta}^{x_1y_1,\ldots,x_ty_t,\emptyset,\ldots,\emptyset}[\mathcal A]-\mathbf {P}_{\beta}^{x_1y_1',\ldots,x_ty_t',\emptyset,\ldots,\emptyset}[\mathcal A]\Big|\leq \frac{\varepsilon}{3}.
\end{equation}
Using \eqref{eq: IIC critical}, we can find $z_1,\ldots,z_n\notin \Lambda_N$ such that
\begin{equation}
	\Big|\mathbf{P}_{\beta_c}^{x_1z_1,\ldots,x_tz_t,\emptyset,\ldots,\emptyset}[\mathcal A]-\mathbf P^{x_1\infty,\ldots,x_t\infty,\emptyset,\ldots,\emptyset}[\mathcal A]\Big|\leq \frac{\varepsilon}{3}.
\end{equation}
Finally, we use \cite[Theorem~2.3]{AizenmanDuminilSidoraviciusContinuityIsing2015} to argue that the map $\beta \mapsto \mathbf{P}_{\beta}^{x_1z_1,\ldots,x_tz_t,\emptyset,\ldots,\emptyset}[\mathcal A]$ is continuous at $\beta_c$. This yields the existence of $\beta_1\in [\beta(\varepsilon),\beta_c)$ such that for all $\beta\in [\beta_1,\beta_c]$,
\begin{equation}
	\Big|\mathbf{P}_{\beta}^{x_1z_1,\ldots,x_tz_t,\emptyset,\ldots,\emptyset}[\mathcal A]-\mathbf{P}_{\beta_c}^{x_1z_1,\ldots,x_tz_t,\emptyset,\ldots,\emptyset}[\mathcal A]\Big|\leq \frac{\varepsilon}{3}.
\end{equation}
Combining the last three displayed equations yields: for all $\beta \in [\beta_1,\beta_c]$, for all $y_1,\ldots, y_t\notin \Lambda_N$,
\begin{equation}
	\Big|\mathbf {P}_{\beta}^{x_1y_1,\ldots,x_ty_t,\emptyset,\ldots,\emptyset}[\mathcal A]-\mathbf P^{x_1\infty,\ldots,x_t\infty,\emptyset,\ldots,\emptyset}[\mathcal A]\Big|\leq \varepsilon.
\end{equation}
In particular,
\begin{equation}\label{eq: last eq IIC rcr}
	\limsup_{\beta \nearrow \beta_c}\frac{1}{\chi(\beta)^t}\sum_{y_1,\ldots,y_t\in \mathbb Z^d}\Big(\prod_{i=1}^t\langle \sigma_{x_i}\sigma_{y_i}\rangle_\beta\Big)\mathbf P_{\beta}^{x_1y_1,\ldots,x_ty_t,\emptyset,\ldots,\emptyset}[\mathcal A]\leq \varepsilon + \mathbf P^{x_1\infty,\ldots,x_t\infty,\emptyset,\ldots,\emptyset}[\mathcal A],
\end{equation}
where we used that $\lim_{\beta\nearrow\beta_c}\tfrac{\chi_N(\beta)}{\chi(\beta)}=0$ with $\chi_N(\beta):=\sum_{x\in\Lambda_N}\langle\sigma_0\sigma_x\rangle_{\beta}$. Since \eqref{eq: last eq IIC rcr} holds for any $\varepsilon>0$ this gives one inequality. The other inequality follows similarly. 
\end{proof}

The known properties of the spin model shed some light over the IIC of the random current. Below we state a (short) non-exhaustive list of properties one may easily derive in the case of the measure $\mathbf P^{0\infty,\emptyset}$.\begin{Prop}\label{prop: prop of rc IIC} Let $d\geq 4$. The measure $\mathbf P^{0\infty,\emptyset}$ constructed in Theorem~\textup{\ref{thm: IIC rcr}} satisfies the following properties:
\begin{enumerate}
\item[$(i)$] The cluster $\mathbf C(0)$ of $0$ under $\mathbf P^{0\infty,\emptyset}$ is one ended almost surely.
	\item[$(ii)$] For any $y\in \mathbb Z^d$, one has
	\begin{equation}\label{eq: connectivity in iic rcr}
		\mathbf P^{0\infty,\emptyset}[0\connect{\n_1+\n_2\:}y]=\langle \sigma_0\sigma_y\rangle_{\beta_c}.
	\end{equation}
	Moreover, there exist $c,C>0$ such that for every $y\in \mathbb Z^d$ with $|y|\geq 2$,
	\begin{equation}\label{eq: connectivity iic rcr d>3}
		\frac{C}{|y|^{d-2}}\geq \mathbf P^{0\infty,\emptyset}[0\connect{\n_1+\n_2\:}y]\geq \begin{cases}\displaystyle\frac{c}{|y|^{d-2}} &\text{ if }d\ge5,\\
    \displaystyle\frac{c}{|y|^{2}\log |y|}&\text{ if }d=4. 
    \end{cases}
	\end{equation}
\end{enumerate}
\end{Prop}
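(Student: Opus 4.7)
The starting point for (ii) is the identity that comes out of a single application of the switching lemma (Lemma \ref{lem: switching lemma}) with $S_1 = \{x,y\}$, $S_2 = \{0,y\}$ (so $S_1\Delta S_2 = \{0,x\}$) and $F \equiv 1$; using the standard identification $\mathcal F_{\{u,v\}} = \{u \connect{\n\:} v\}$, this gives
\begin{equation*}
\mathbf P_\beta^{0x,\emptyset}\bigl[0 \connect{\n_1+\n_2\:} y\bigr] = \frac{\langle \sigma_0 \sigma_y\rangle_\beta \langle \sigma_x \sigma_y\rangle_\beta}{\langle \sigma_0 \sigma_x\rangle_\beta}
\end{equation*}
for all $\beta \le \beta_c$. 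Rather than sending $|x|\to\infty$ directly (which would require a delicate ratio asymptotic for the two-point function), I would plug this into the near-critical averaged identity \eqref{eq: IIC near critical} of Theorem \ref{thm: IIC rcr}: the $\langle \sigma_0\sigma_x\rangle_\beta$ factors cancel, and translation invariance reduces $\sum_x\langle\sigma_x\sigma_y\rangle_\beta$ to $\chi(\beta)$, producing the clean identity
\begin{equation*}
\frac{1}{\chi(\beta)}\sum_{x\in\mathbb Z^d}\langle\sigma_0\sigma_x\rangle_\beta\,\mathbf P_\beta^{0x,\emptyset}\bigl[0\connect{\n_1+\n_2\:}y\bigr] = \langle\sigma_0\sigma_y\rangle_\beta
\end{equation*}
for every $\beta<\beta_c$. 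Approximating the non-local event $\{0\connect{}y\}$ from below by the local events $\{0\connect{}y\text{ inside }\Lambda_N\}$ (on which Theorem \ref{thm: IIC rcr} delivers convergence) and controlling the remainder $\mathbf P_\beta^{0x,\emptyset}[0\connect{}y,\:0\not\connect{}y\text{ in }\Lambda_N]$ via a tree-graph bound and \eqref{eq: IRB}, the limit $\beta\nearrow\beta_c$ followed by $N\to\infty$ yields \eqref{eq: connectivity in iic rcr}.

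The bounds in \eqref{eq: connectivity iic rcr d>3} are then direct consequences of \eqref{eq: connectivity in iic rcr}: the upper bound is exactly \eqref{eq: IRB}, the $d\geq 5$ lower bound is the matching lower bound of \cite{duminil2024new}, and the $d=4$ lower bound follows from the corresponding log-corrected two-point function estimate available at the critical dimension (which itself can be extracted via a combination of reflection positivity and random-current arguments, using the marginal-triviality estimates of \cite{AizenmanDuminilTriviality2021}).

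For part (i), I would observe that $\mathbf C(0)$ has $\geq 2$ ends if and only if there exists $n\geq 1$ such that $\mathbf C(0)\setminus\Lambda_n$ has at least two infinite connected components; a union bound over $n$ reduces the problem to showing, for each fixed $n\geq 1$,
\begin{equation*}
\lim_{N\to\infty}\mathbf P^{0\infty,\emptyset}\bigl[\mathbf C(0)\setminus\Lambda_n\text{ contains two disjoint clusters each reaching }\partial\Lambda_N\bigr] = 0.
\end{equation*}
The event inside is local, so one may replace $\mathbf P^{0\infty,\emptyset}$ by the limit of $\mathbf P_{\beta_c}^{0z,\emptyset}$ for $|z|\gg N$ using Theorem \ref{thm: IIC rcr}. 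Under this finite-source measure, $\mathbf C(0) = \mathbf C(z)$, so the event forces two essentially disjoint open paths in $\n_1+\n_2$ from $\partial\Lambda_n$ to $\partial\Lambda_N$. A BK-type bound for sums of two currents --- obtained by using the switching lemma to reorganize the source set $\{0,z\}$ and then invoking the tree-graph inequality of \cite{AizenmanGeometricAnalysis1982} --- dominates this probability by a convolution of two-point functions indexed by pairs of endpoints on $\partial\Lambda_n$ and $\partial\Lambda_N$. Plugging in \eqref{eq: IRB} and using $d\geq 4$ ensures summability and a bound of order $o_N(1)$.

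The principal difficulty sits in the disjoint-paths estimate of (i): unlike Bernoulli percolation, the sum $\n_1+\n_2$ does not satisfy a direct BK inequality, so the correct disjoint-occurrence bound has to be set up at the level of currents, with the switching lemma used to split the two putative paths between the source-carrying current $\n_1$ (whose backbone from $0$ to $z$ must carry one of them) and the sourceless loops of $\n_2$ (which must then carry the other). The bookkeeping this requires is where the bulk of the work sits; part (ii) is, by comparison, essentially a one-line consequence of the switching lemma once the two-point function bounds are taken as inputs.
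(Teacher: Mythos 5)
Your skeleton is close to the paper's, and your averaging trick for (ii) — feeding the switching identity $\mathbf P_\beta^{0x,\emptyset}[0\leftrightarrow y]=\langle\sigma_0\sigma_y\rangle_\beta\langle\sigma_x\sigma_y\rangle_\beta/\langle\sigma_0\sigma_x\rangle_\beta$ into \eqref{eq: IIC near critical} so that the ratio asymptotics of the two-point function is never needed — is a genuinely nice alternative to the paper's route (which sends $|x|\to\infty$ directly and uses the gradient estimate of \cite[Proposition~5.9]{AizenmanDuminilTriviality2021}). But in both (i) and (ii) the step you defer is precisely the content of the proof, and the tools you name for it do not work. For (ii), the remainder $\mathbf P_\beta^{0x,\emptyset}[0\leftrightarrow y,\ 0\not\leftrightarrow y\text{ in }\Lambda_N]$ cannot be handled by ``a tree-graph bound and \eqref{eq: IRB}'': there is no BK-type factorization for $\n_1+\n_2$, and a union over a cut vertex $z\in\partial\Lambda_N$ of $\{0\leftrightarrow z\}\cap\{z\leftrightarrow y\}$ is useless because under your averaged measure both one-arm-type events have non-negligible probability (the backbone to $x$ crosses $\partial\Lambda_N$ with essentially full weight). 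What the paper does instead is decompose according to the backbone: either $\Gamma(\n_1)$ makes a double crossing of an intermediate annulus, which is controlled by the chain rule (Proposition \ref{prop: chain rule}) and \eqref{eq: IRB}, or $y$ is connected over a long distance in $(\n_1+\n_2)\setminus\overline\Gamma(\n_1)$, which is controlled by conditioning on the backbone and applying the switching principle exactly as in \eqref{eq: bound C(s,t)}, giving a sum of \emph{squared} two-point functions; both bounds are uniform in $\beta\le\beta_c$, which your averaged formulation requires. Without this (or an equivalent) localization argument, \eqref{eq: connectivity in iic rcr} is not proved. Also, the $d=4$ lower bound in \eqref{eq: connectivity iic rcr d>3} is not something you can ``extract'' in passing from \cite{AizenmanDuminilTriviality2021}; the paper quotes it from \cite[Theorem~1.4]{duminil2024new}.

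For (i) the same mechanism issue recurs, plus two concrete inaccuracies. First, your splitting ``the backbone of $\n_1$ carries one path, the sourceless loops of $\n_2$ must carry the other'' is wrong: the second crossing may perfectly well use off-backbone edges of $\n_1$; the correct statement is that it lies in $(\n_1+\n_2)\setminus\overline\Gamma(\n_1)$, and its probability is bounded, via the \eqref{eq: bound C(s,t)} argument and Griffiths' inequality, by $\sum_{u\in\partial\Lambda_n}\sum_{v\in\partial\Lambda_N}\langle\sigma_u\sigma_v\rangle_{\beta_c}^2\lesssim n^{d-1}N^{3-d}$, which is where $d\geq4$ enters (note that a literal ``convolution of two-point functions'' $\sum_w\langle\sigma_u\sigma_w\rangle\langle\sigma_w\sigma_v\rangle$ summed over such pairs would \emph{not} be small). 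Second, the event ``two components of $\mathbf C(0)\setminus\Lambda_n$ each reach $\partial\Lambda_N$'' is not local, since membership in $\mathbf C(0)$ is not determined by the edges in $\Lambda_N$, so you cannot directly invoke Theorem \ref{thm: IIC rcr}; one needs a local surrogate or, as in the paper, to phrase the estimate as: for fixed $y$ and $n$, the probability that $y$ is connected to $\partial\Lambda_n$ in $(\n_1+\n_2)\setminus\overline\Gamma(\n_1)$ is at most $Cn^{-(d-3)}$, uniformly in the source $x$, and then let $n\to\infty$. In short: correct outline, genuinely interesting variant in (ii), but the disjointness/localization estimates — the actual work — are missing, and the named substitutes (tree-graph, BK) would not supply them.
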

We postpone the proof of this result to Section \ref{section: proof of prop iic rc}.
The property $(ii)$ above suggests that, when $d\geq 4$, the IIC of the measure $\mathbf P^{0\infty,\emptyset}$ is a two-dimensional object. 
We can easily prove that the above result also holds when $d=3$ under the assumption\footnote{In fact, we even expect $\lim_{|x|\rightarrow \infty}|x|^{1+\varepsilon}\langle \sigma_0\sigma_x\rangle_{\beta_c}=0$ for some $\varepsilon>0$. This is related to the fact that the critical exponent $\eta$ of the two-point function is conjectured to satisfy $\eta>0$ in the $3D$ Ising model.} that $\lim_{|x|\rightarrow \infty}|x|\langle \sigma_0\sigma_x\rangle_{\beta_c}=0$.

\subsection{Construction of the IIC measure for the FK-Ising model}
Theorems \ref{thm: first construction IIC fk} and \ref{thm: subcritical approach to IIC FK Ising} are simple consequences of Proposition \ref{prop: coupling fk rcr} and Theorem \ref{thm: IIC rcr}. 
\begin{proof}[Proof of Theorem \textup{\ref{thm: first construction IIC fk}}] {\color{black}If $E\subset \mathbb E$ and $\eta,\eta'\in \{0,1\}^{E}$, we let $\eta\vee \eta':=(\eta_e\vee\eta_{e}')_{e\in E}$}. Let $\omega{\color{black}=(\omega_e)_{e\in \mathbb E}}$ be a Bernoulli percolation of parameter $1-\exp(-\beta_c)$, and write $\mathbb P_{1-\exp(-\beta_c)}$ for its law. Let $E\in \mathcal F_0$ be measurable in terms of edges in $\Lambda_n$. By Proposition \ref{prop: coupling fk rcr}, for all $x\in \mathbb Z^d$,
\begin{equation}\label{eq:proof iic fk1}
	\Big(\mathbf P_{\beta_c}^{0x}\otimes \mathbb P_{1-\exp(-\beta_c)}\Big)[(\mathds{1}_{\n_e>0})_{e\in \mathbb E}\vee \omega \in E]=\phi_{\beta_c}[E\:|\:0\connect{}x].
\end{equation}
Conditioning on the restriction $\omega_n$ of $\omega$ to the edges in $\Lambda_n$,
\begin{equation}\label{eq:proof iic fk2}
	\sum_{\omega_n \in \{0,1\}^{E(\Lambda_n)}}\mathbb P_{1-\exp(-\beta_c)}[\omega_n]\mathbf P_{\beta_c}^{0x}[(\mathds{1}_{\n_e>0})_{e\in {\color{black}E(\Lambda_n)}}\vee \omega_n \in E]=\phi_{\beta_c}[E\:|\:0\connect{}x].
\end{equation}
For a fixed configuration $\omega_n$, the event $\{(\mathds{1}_{\n_e>0})_{e\in {\color{black}E(\Lambda_n)}}\vee \omega_n \in E\}$ is local and hence by Theorem \ref{thm: IIC rcr} one has
\begin{align}
	\lim_{|x|\rightarrow \infty}\sum_{\omega_n \in \{0,1\}^{E(\Lambda_n)}}&\mathbb P_{1-\exp(-\beta_c)}[\omega_n]\mathbf P_{\beta_c}^{0x}[(\mathds{1}_{\n_e>0})_{e\in {\color{black}E(\Lambda_n)}}\vee \omega_n \in E]\notag
	\\&=\sum_{\omega_n \in \{0,1\}^{E(\Lambda_n)}}\mathbb P_{1-\exp(-\beta_c)}[\omega_n]\mathbf P^{0\infty}[(\mathds{1}_{\n_e>0})_{e\in {\color{black}E(\Lambda_n)}}\vee \omega_n \in E]\notag
	\\&=\Big(\mathbf P_{}^{0\infty}\otimes \mathbb P_{1-\exp(-\beta_c)}\Big)[(\mathds{1}_{\n_e>0})_{e\in \mathbb E}\vee \omega \in E].\label{eq:proof iic fk3}
\end{align}
Plugging \eqref{eq:proof iic fk3} in \eqref{eq:proof iic fk2}, it follows  that $\lim_{|x|\rightarrow \infty}\phi_{\beta_c}[E\:|\:0\connect{}x]$ exists and satisfies
\begin{equation}\label{eq: proof thm1 fk}
	\Big(\mathbf P_{}^{0\infty}\otimes \mathbb P_{1-\exp(-\beta_c)}\Big)[(\mathds{1}_{\n_e>0})_{e\in \mathbb E}\vee \omega \in E]=\lim_{|x|\rightarrow \infty}\phi_{\beta_c}[E\:|\:0\connect{}x].
\end{equation}
Since this holds for every $E\in \mathcal F_0$, the proof follows by defining $\phi^\infty$ to be the law of a percolation configuration sampled by $\mathbf P^{0\infty}$ and sprinkled by an independent Bernoulli percolation of parameter $1-\exp(-\beta_c)$.
\end{proof}
\begin{proof}[Proof of Theorem~\textup{\ref{thm: subcritical approach to IIC FK Ising}}] Using \eqref{eq: consequences of ESC} and Proposition \ref{prop: coupling fk rcr}, for every $\beta<\beta_c$, for every $E\in \mathcal F_0$,
\begin{align}
	\mathbb Q_\beta[E]&=\frac{1}{\chi(\beta)}\sum_{x\in \mathbb Z^d}\langle \sigma_0\sigma_x\rangle_\beta \phi_\beta[E\:|\:0\connect{}x]\notag
	\\&=\frac{1}{\chi(\beta)}\sum_{x\in \mathbb Z^d}\langle \sigma_0\sigma_x\rangle_\beta \Big(\mathbf P_{\beta}^{0x}\otimes \mathbb P_{1-\exp(-\beta)}\Big)[(\mathds{1}_{\n_e>0})_{e\in \mathbb E}\vee \omega \in E]\notag
	\\&=\Big(\frac{1}{\chi(\beta)}\sum_{x\in \mathbb Z^d}\langle \sigma_0\sigma_x\rangle_\beta\mathbf P_{\beta}^{0x}\Big)\otimes \mathbb P_{1-\exp(-\beta)}[(\mathds{1}_{\n_e>0})_{e\in \mathbb E}\vee \omega \in E]
\end{align}
Once again, if $E$ is measurable in terms of edges in $\Lambda_n$, for every $\omega_n\in \{0,1\}^{E(\Lambda_n)}$, Theorem \ref{thm: IIC rcr} implies the following convergence,
\begin{equation}
\lim_{\beta\nearrow\beta_c}\frac{1}{\chi(\beta)}\sum_{x\in \mathbb Z^d}\langle \sigma_0\sigma_x\rangle_\beta\mathbf P_{\beta}^{0x}[(\mathds{1}_{\n_e>0})_{e\in \mathbb E}\vee \omega_n \in E]=\mathbf P^{0\infty}[(\mathds{1}_{\n_e>0})_{e\in \mathbb E}\vee \omega_n \in E].
\end{equation}
Thus, as above,
\begin{equation}
	\lim_{\beta\nearrow \beta_c}\mathbb Q_\beta[E]=\Big(\mathbf P_{}^{0\infty}\otimes \mathbb P_{1-\exp(-\beta_c)}\Big)[(\mathds{1}_{\n_e>0})_{e\in \mathbb E}\vee \omega \in E]=\phi^\infty[E],
\end{equation}
where we used Theorem \ref{thm: first construction IIC fk} (or more specifically \eqref{eq: proof thm1 fk}) in the last equality.
\end{proof}

\section{Near-critical behaviour of the susceptibility when $d>4$: proof of Theorem \ref{thm: exact behaviour}}\label{section: susc}
We now turn to the study of the high-dimensional susceptibility of the Ising model. We begin by reminding a classical consequence of the switching lemma.

\subsection{Geometric interpretation of $\frac{\mathrm{d}\chi^{-1}}{\mathrm{d}\beta}$}
Set $J_{u,v}=\mathds{1}_{|u-v|_2=1}$ and let $|J|:=\sum_{x\in \mathbb Z^d}J_{0,x}=2d$. The following computation goes back to \cite{AizenmanGeometricAnalysis1982}.
We start by writing 
\begin{equation}\label{eq: proof thm 1 (1)}
	-\frac{\mathrm{d}\chi^{-1}(\beta)}{\mathrm{d}\beta}=\frac{1}{2}\frac{1}{\chi(\beta)^2}\sum_{y\in \mathbb Z^d}\sum_{u,v\in \mathbb Z^d}J_{u,v}\langle \sigma_0\sigma_y;\sigma_u\sigma_v\rangle_\beta.
\end{equation}
Now, notice that $\langle \sigma_0\sigma_y;\sigma_u\sigma_v\rangle_\beta=\langle \sigma_0\sigma_u\rangle_\beta\langle \sigma_v\sigma_y\rangle_\beta+\langle \sigma_0\sigma_v\rangle_\beta\langle \sigma_u\sigma_y\rangle_\beta+U_4^\beta(0,y,u,v)$, where $U_4^\beta(0,y,u,v)$ is Ursell's four-point function and is defined by
\begin{equation}
	U_4^\beta(0,y,u,v):=\langle \sigma_0\sigma_y\sigma_u\sigma_v\rangle_\beta-\langle \sigma_0\sigma_y\rangle_\beta\langle \sigma_u\sigma_v\rangle_\beta-\langle \sigma_0\sigma_u\rangle_\beta\langle \sigma_y\sigma_v\rangle_\beta-\langle \sigma_0\sigma_v\rangle_\beta\langle \sigma_y\sigma_u\rangle_\beta.
\end{equation}
A classical consequence of the switching lemma \cite{AizenmanGeometricAnalysis1982} is the following representation of $U_4^\beta$:
\begin{equation}\label{eq: representation ursell}
	U_4^\beta(0,y,u,v)=-2\langle \sigma_0\sigma_u\rangle_\beta\langle \sigma_v\sigma_y\rangle_\beta \mathbf P_\beta^{0u,vy}[\mathbf C_{\n_1+\n_2}(u)\cap \mathbf C_{\n_1+\n_2}(v)\neq \emptyset].
\end{equation}
Hence, using \eqref{eq: proof thm 1 (1)}, \eqref{eq: representation ursell}, and the symmetries of the model,
\begin{align}\notag
	-\frac{\mathrm{d}\chi^{-1}(\beta)}{\mathrm{d}\beta}&=|J|-\frac{1}{\chi(\beta)^2}\sum_{y,u,v\in \mathbb Z^d}J_{u,v}\langle \sigma_0\sigma_u\rangle_\beta\langle\sigma_v\sigma_y\rangle_\beta\mathbf P^{0u,vy}_\beta[\mathbf{C}_{\n_1+\n_2}(u)\cap \mathbf{C}_{\n_1+\n_2}(v)\neq \emptyset]
	\\\notag &=\frac{1}{\chi(\beta)^2}\sum_{y,u,v\in \mathbb Z^d}J_{u,v}\langle \sigma_0\sigma_u\rangle_\beta\langle\sigma_v\sigma_y\rangle_\beta\mathbf P^{0u,vy}_\beta[\mathbf{C}_{\n_1+\n_2}(u)\cap \mathbf{C}_{\n_1+\n_2}(v)=\emptyset]
	\\\notag &=\frac{1}{\chi(\beta)^2}\sum_{x,y\in \mathbb Z^d}\sum_{\substack{w\in \mathbb Z^d\\|w|_2=1}}\langle \sigma_0\sigma_x\rangle_\beta\langle\sigma_{w}\sigma_{y}\rangle_\beta\mathbf P^{0x,wy}_\beta[\mathbf{C}_{\n_1+\n_2}(0)\cap \mathbf{C}_{\n_1+\n_2}(w)=\emptyset]
	\\\label{eq: proof thm 1 (3)}
	&=\frac{2d}{\chi(\beta)^2}\sum_{x,y\in \mathbb Z^d}\langle \sigma_0\sigma_x\rangle_\beta\langle\sigma_{\mathbf e_1}\sigma_{y}\rangle_\beta\mathbf P^{0x,\mathbf{e}_1 y}_\beta[\mathbf{C}_{\n_1+\n_2}(0)\cap \mathbf{C}_{\n_1+\n_2}(\mathbf{e}_1)=\emptyset].
\end{align}
See Figure \ref{figure: event susceptibility} for an illustration. One of the contributions of this paper is a precise analysis of the probability appearing in \eqref{eq: proof thm 1 (3)}. The main difficulty to perform it lies in the lack of independence of $\mathbf{C}_{\n_1+\n_2}(0)$ and $\mathbf{C}_{\n_1+\n_2}(\mathbf{e}_1)$, which is usually helpful for the purpose of using the switching lemma.
To lighten the notations below, we will write for an event $\mathcal E$, 
\begin{equation}\label{eq: notation averaged proba}
	\overline{\mathbf{P}}_{\beta}^{(\ell)}[\mathcal E]:=\frac{1}{\chi(\beta)^2}\sum_{\substack{|x|\geq \ell\\|y|\geq \ell}}\langle \sigma_0\sigma_x\rangle_\beta\langle \sigma_{\mathbf{e}_1}\sigma_y\rangle_\beta \mathbf P^{0x,\mathbf{e}_1 y}_\beta[\mathcal E].
\end{equation}
This quantity corresponds to averaging the probability of occurrence of an event $\mathcal E$ with respect to the position of the sources.

\begin{figure}
	\begin{center}
		\includegraphics{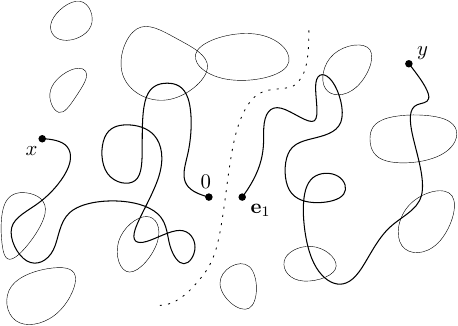}
		\caption{A configuration $\n$ which satisfies the event appearing in \eqref{eq: proof thm 1 (3)}. The dotted line highlights the fact that $0$ and $\mathbf{e}_1$ are not connected in the percolation configuration induced by $\n$.}
		\label{figure: event susceptibility}
	\end{center}
\end{figure}
\begin{Rem}\label{Rem: if x or y small then small}
	In \eqref{eq: proof thm 1 (3)}, the contribution of $x$ or $y$ in $\Lambda_\ell$ is bounded by 
\begin{equation}
	4d\frac{\chi_\ell(\beta)
}{\chi(\beta)},
\end{equation}
{\color{black}where $\chi_{\ell}(\beta):=\sum_{x\in \Lambda_{\ell}}\langle \sigma_0\sigma_x\rangle_{\beta}$.}
This can be made arbitrarily small by choosing $\beta$ close enough to $\beta_c$. As a result, as we approach $\beta_c$, it is enough to study $\overline{\mathbf{P}}_\beta^{(\ell)}[\mathbf{C}_{\n_1+\n_2}(0)\cap \mathbf{C}_{\n_1+\n_2}(\mathbf{e}_1)=\emptyset]$. This justifies the notation introduced in  \eqref{eq: notation averaged proba}.
\end{Rem}

\begin{Def}[Local event of avoidance] Let $k\geq 1$. We introduce the local event
\begin{equation}\label{eq:def Ak}
	\mathcal A_k:=\Big\{\n \in \Omega_{\mathbb Z^d}: \: \mathbf C_{\n_{|\Lambda_k}}(0)\cap \mathbf C_{\n_{|\Lambda_k}}(\mathbf e_1)=\emptyset\Big\},
\end{equation}
or, in words, the event that the clusters of $0$ and $\mathbf{e}_1$ in the restriction of $\n$ to $\Lambda_k$ do not intersect.
\end{Def}
The following lemma formalises the fact that the event appearing in \eqref{eq: proof thm 1 (3)} is essentially local around $0$ and $\mathbf{e}_1$. This is the main technical step of the proof.

\begin{Lem}\label{lem: localization step} {\color{black}Let $d>4$. }There exists $C=C(d),\eta>0$ such that, for all $\beta \leq \beta_c$, for all $k\geq 1$ and $\ell\geq 2k$,
\begin{equation}
	\left|\overline{\mathbf P}_{\beta}^{(\ell)}[\mathbf{C}_{\n_1+\n_2}(0)\cap \mathbf{C}_{\n_1+\n_2}(\mathbf{e}_1)=\emptyset]-\overline{\mathbf P}_{\beta}^{(\ell)}[\mathcal A_k]\right|\leq Ck^{-\eta}.
\end{equation}
\end{Lem}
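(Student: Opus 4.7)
My first observation is that $\{\mathbf{C}_{\n_1+\n_2}(0)\cap \mathbf{C}_{\n_1+\n_2}(\mathbf{e}_1)=\emptyset\}\subset\mathcal{A}_k$, since a global disconnection between $0$ and $\mathbf{e}_1$ in $\n_1+\n_2$ is preserved under restriction to $\Lambda_k$. The difference to be controlled therefore equals $\overline{\mathbf{P}}_\beta^{(\ell)}[\mathcal{B}_k]$, where
\begin{equation*}
    \mathcal{B}_k:=\mathcal{A}_k\cap\{0\connect{\n_1+\n_2}\mathbf{e}_1\}
\end{equation*}
is the ``bad'' event that $0$ and $\mathbf{e}_1$ share a cluster in $\n_1+\n_2$ but take a detour outside $\Lambda_k$. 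On $\mathcal{B}_k$, this common cluster $\mathbf{C}$ must contain a vertex outside $\Lambda_k$ (otherwise $\mathbf{C}\subset\Lambda_k$, which would contradict $\mathcal{A}_k$). A union bound then yields
\begin{equation*}
    \overline{\mathbf{P}}_\beta^{(\ell)}[\mathcal{B}_k]\leq \sum_{z\notin\Lambda_k}\overline{\mathbf{P}}_\beta^{(\ell)}\big[\{0\connect{\n_1+\n_2}z\}\cap\{\mathbf{e}_1\connect{\n_1+\n_2}z\}\big].
\end{equation*}

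The heart of the argument will be the quasi-factorisation estimate
\begin{equation}\label{eq:plan-key}
    \overline{\mathbf{P}}_\beta^{(\ell)}\big[\{0\connect{\n_1+\n_2}z\}\cap\{\mathbf{e}_1\connect{\n_1+\n_2}z\}\big]\leq C\,\langle\sigma_0\sigma_z\rangle_\beta\langle\sigma_{\mathbf{e}_1}\sigma_z\rangle_\beta,
\end{equation}
uniform in $\beta\leq\beta_c$ and $\ell\geq 2k$. With \eqref{eq:plan-key} at hand, the infrared bound \eqref{eq: IRB} applied to both factors gives
\begin{equation*}
    \sum_{z\notin\Lambda_k}\langle\sigma_0\sigma_z\rangle_\beta\langle\sigma_{\mathbf{e}_1}\sigma_z\rangle_\beta\leq C\sum_{|z|\geq k}|z|^{-2(d-2)}\leq C'\,k^{-(d-4)},
\end{equation*}
which is summable precisely because $d>4$, yielding the lemma with $\eta:=d-4$.

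To prove \eqref{eq:plan-key} I would apply the switching lemma in several steps. A first switching $(\n_1,\n_2)\mapsto(\{0,x,\mathbf{e}_1,y\},\emptyset)$ rewrites the weighted integrand as $\langle\sigma_0\sigma_x\sigma_{\mathbf{e}_1}\sigma_y\rangle_\beta\,\mathbf{P}_\beta^{\{0,x,\mathbf{e}_1,y\},\emptyset}\big[\{0,x,\mathbf{e}_1,y,z\}\text{ in a single cluster of }\n_1+\n_2\big]$ (the five-point event comes from combining $\{0\connect{}z,\mathbf{e}_1\connect{}z\}$ with $\mathcal{F}_{\{\mathbf{e}_1,y\}}$ and source parity, which forces $x$ into the common cluster). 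Two further switchings, against independent dummy currents with sources $\{0,z\}$ and $\{\mathbf{e}_1,z\}$, performed in the spirit of Aizenman's derivation of \eqref{eq: representation ursell}, should extract the two correlation factors $\langle\sigma_0\sigma_z\rangle_\beta$ and $\langle\sigma_{\mathbf{e}_1}\sigma_z\rangle_\beta$, while the residual dependence on $x,y$ is to be absorbed by the Lebowitz inequality $\langle\sigma_0\sigma_x\sigma_{\mathbf{e}_1}\sigma_y\rangle_\beta\leq\sum_{\text{pairings}}\langle\sigma\sigma\rangle_\beta\langle\sigma\sigma\rangle_\beta$ and the $1/\chi(\beta)^2$ normalisation. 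The hard part will be the clean execution of this multi-step switching: the event $\{0\connect{}z\}\cap\{\mathbf{e}_1\connect{}z\}$ lives jointly on the pair $(\n_1,\n_2)$ and is not a disjoint occurrence, so no direct BK-type inequality is available; careful bookkeeping will be required to avoid generating spurious denominators of the form $\langle\sigma_0\sigma_z\rangle^{-1}$, which would grow polynomially in $|z|$ and ruin the summability in the infrared sum.
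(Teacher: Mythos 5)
Your reduction is sound up to and including the observation that on $\mathcal B_k=\mathcal A_k\cap\{0\connect{\n_1+\n_2\:}\mathbf e_1\}$ the common cluster must contain some $z\notin\Lambda_k$, but the argument collapses at the next step: the claimed quasi-factorisation $\overline{\mathbf P}_\beta^{(\ell)}\big[0\connect{\n_1+\n_2\:}z,\ \mathbf e_1\connect{\n_1+\n_2\:}z\big]\leq C\langle\sigma_0\sigma_z\rangle_\beta\langle\sigma_{\mathbf e_1}\sigma_z\rangle_\beta$ is not just hard to execute, it is false, and the union bound you feed it into is already too lossy. The point is that the event $\{z\in\mathbf C_{\n_1+\n_2}(0)\cap\mathbf C_{\n_1+\n_2}(\mathbf e_1)\}$ does not force two separate connections emanating from $z$: with probability bounded below uniformly in $|x|,|y|\geq\ell$ (take $\beta=\beta_c$; for instance the first step of the backbone $\Gamma(\n_1)$ is the edge $\{0,\mathbf e_1\}$, an event of probability of order $\beta_c\langle\sigma_{\mathbf e_1}\sigma_x\rangle_{\beta_c}/\langle\sigma_0\sigma_x\rangle_{\beta_c}$, bounded below by the backbone representation and Griffiths' inequality), the clusters of $0$ and $\mathbf e_1$ simply coincide. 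On that event every vertex of $\Gamma(\n_1)$ lies in the intersection, and since $|x|\geq\ell\geq 2k$ the backbone has at least order $k$ vertices outside $\Lambda_k$. Hence $\sum_{z\notin\Lambda_k}\overline{\mathbf P}_\beta^{(\ell)}[0\connect{\n_1+\n_2\:}z,\ \mathbf e_1\connect{\n_1+\n_2\:}z]\geq c\,k$, while your proposed right-hand side sums over $z\notin\Lambda_k$ to $O(k^{-(d-4)})$ by the infrared bound; no constant $C$ can reconcile these. The product bound you have in mind is reminiscent of the tree-diagram bound, but there the sum over $z$ controls the single event $\{\mathbf C(0)\cap\mathbf C(\mathbf e_1)\neq\emptyset\}$ through a canonical choice of meeting vertex (via the backbone and the chain rule); it does not yield a pointwise bound on $\mathbf P[z\in\mathbf C(0)\cap\mathbf C(\mathbf e_1)]$ for each fixed $z$. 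Equivalently: by discarding $\mathcal A_k$ before the union bound you threw away exactly the constraint that makes $\mathcal B_k$ rare, since for a single far $z$ the event $\{z\connect{}0\}\cap\{z\connect{}\mathbf e_1\}$ is not rare once the clusters are allowed to merge near the origin.

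A workable proof has to retain the geometric information in $\mathcal A_k$, and this is what the paper's argument does. One decomposes $\mathcal B_k$ according to the pair $u\sim v$ at which the cluster of $\mathbf e_1$ attaches to $\Gamma(\n_1)$ without using $\overline\Gamma(\n_1)$ (except for the last edge). If $u\notin\Lambda_m$ with $m=k^\delta$, exploring the backbone and using the switching lemma, Griffiths' inequality and the chain rule gives a bound whose sum is $O(m^{-(d-4)})$; if $u\in\Lambda_m$, the constraint $\mathcal A_k$ forces the attaching path to make an excursion outside $\Lambda_k$ and back, which entails either a double crossing of $\textup{Ann}(M,k)$ by $\Gamma(\n_2)$ or a crossing of $\textup{Ann}(m,M)$ in $\n_1+\n_2\setminus(\overline\Gamma(\n_1)\cup\overline\Gamma(\n_2))$ (with an intermediate scale $M=m^A$), and these are controlled by the chain rule, the switching principle (giving $\langle\sigma_s\sigma_t\rangle_\beta^2$ for the latter connection), and the infrared bound. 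That multi-scale bookkeeping, rather than a pointwise factorisation, is what produces the $k^{-\eta}$.
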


Before proving Lemma \ref{lem: localization step}, we show how to conclude the proof of Theorems \ref{thm: exact behaviour} and \ref{thm: value of the constant}. 

\begin{proof}[Proof of Theorems \textup{\ref{thm: exact behaviour} and \textup{\ref{thm: value of the constant}}}]
Using Theorem \ref{thm: IIC rcr}, 
\begin{equation}\label{eq: proof s1}
	\lim_{\beta \nearrow \beta_c}\frac{1}{\chi(\beta)^2}\sum_{x,y\in \mathbb Z^d}\langle \sigma_0\sigma_x\rangle_\beta\langle \sigma_{\mathbf{e}_1}\sigma_y\rangle_\beta\mathbf P_\beta^{0x,\mathbf{e}_1y}[\mathcal A_k]=\mathbf P^{0\infty,\mathbf{e}_1\infty}[\mathcal A_k].
\end{equation}
Let $\varepsilon>0$. Combining \eqref{eq: proof thm 1 (3)}, Lemma \ref{lem: localization step} and \eqref{eq: proof s1}, if $\beta$ is sufficiently close to $\beta_c$ and $\ell\geq 2k$,
\begin{equation}
	-\frac{\mathrm{d}\chi^{-1}(\beta)}{\mathrm{d}\beta}\leq  4d\frac{\chi_{\ell}(\beta)}{\chi(\beta)}+2d\cdot \mathbf P^{0\infty,\mathbf{e}_1\infty}
[\mathcal A_k]+\frac{C}{k^\eta}+\varepsilon.
\end{equation}
Using Remark \ref{Rem: if x or y small then small}, we may choose $\beta$ even closer to $\beta_c$ to get
\begin{equation}
	-\frac{\mathrm{d}\chi^{-1}(\beta)}{\mathrm{d}\beta}\leq  2d\cdot \mathbf P^{0\infty,\mathbf{e}_1\infty}
[\mathcal A_k]+\frac{C}{k^\eta}+2\varepsilon.
\end{equation}
so that, integrating between $\beta$ and $\beta_c$ and taking $\beta$ to $\beta_c$ yields,
\begin{equation}\label{eq: final proof thm1 (1)}
	\limsup_{\beta\nearrow\beta_c} \big[\chi(\beta)(1-\beta/\beta_c)\big]^{-1}\leq (2d\beta_c)\cdot \mathbf P^{0\infty,\mathbf{e}_1\infty}
[\mathcal A_k]+\frac{C}{k^\eta}+2\varepsilon.
\end{equation}
Letting $k\rightarrow \infty$ and $\varepsilon \rightarrow 0$,
\begin{equation}
	\limsup_{\beta\nearrow \beta_c} \big[\chi(\beta)(1-\beta/\beta_c)\big]^{-1}\leq (2d\beta_c) \cdot\mathbf P^{0\infty,\mathbf{e}_1\infty}[\mathbf C_{\n_1+\n_2}(0)\cap \mathbf{C}_{\n_1+\n_2}(\mathbf{e}_1)=\emptyset].
\end{equation}
Similarly, if $\beta$ is close enough to $\beta_c$, $\overline{\mathbf P}_{\beta}^{(\ell)}[\mathcal A_k]\geq \mathbf P^{0\infty,\mathbf{e}_1\infty}[\mathcal A_k]-\tfrac{\varepsilon}{2d}$ so that
\begin{equation}
	-\frac{\mathrm{d}\chi^{-1}(\beta)}{\mathrm{d}\beta}\geq 2d\cdot\overline{\mathbf P}_{\beta}^{(\ell)}[\mathcal A_k]-\frac{C}{k^\eta}\geq  2d\cdot \mathbf P^{0\infty,\mathbf{e}_1\infty}
[\mathcal A_k]-\frac{C}{k^\eta}-\varepsilon.
\end{equation}
We deduce again that
\begin{equation}
	\liminf_{\beta \nearrow\beta_c} \big[\chi(\beta)(1-\beta/\beta_c)\big]^{-1}\geq (2d\beta_c) \cdot\mathbf P^{0\infty,\mathbf{e}_1\infty}[\mathbf C_{\n_1+\n_2}(0)\cap \mathbf{C}_{\n_1+\n_2}(\mathbf{e}_1)=\emptyset].
\end{equation}
The proof follows readily
\end{proof}
\begin{Rem}\label{rem: lower bound proba} In fact, the above proof also shows that 
\begin{equation}\label{eq: rem-2}
	\lim_{\beta\nearrow \beta_c}\left|\frac{\mathrm{d}\chi^{-1}(\beta)}{\mathrm{d}\beta}\right|=2d \cdot \mathbf P^{0\infty,\mathbf{e}_1\infty}[\mathbf C_{\n_1+\n_2}(0)\cap \mathbf{C}_{\n_1+\n_2}(\mathbf{e}_1)=\emptyset].
\end{equation}
Using \cite[(3.4)]{AizenmanGrahamRenormalizedCouplingSusceptibilityd=4-1983}, this gives
\begin{equation}\label{eq: rem-1}
	\mathbf P^{0\infty,\mathbf{e}_1\infty}[\mathbf C_{\n_1+\n_2}(0)\cap \mathbf{C}_{\n_1+\n_2}(\mathbf{e}_1)=\emptyset]\geq \frac{1}{1+(2d\beta_c) B(\beta_c)}.
\end{equation}
However, it is possible to obtain a slightly better bound, replacing $B(\beta_c)$ by $B^{(\mathbf{e}_1)}(\beta_c)$ (defined in \eqref{eq: def open bubble}). {\color{black} This is a minor improvement of \cite{AizenmanGrahamRenormalizedCouplingSusceptibilityd=4-1983} since (by the results of Sakai \cite{Sakai2007LaceExpIsing}) one has that $\lim_{d\rightarrow \infty}(2d\beta_c)B^{(\mathbf e_1)}(\beta_c)=0$, while $\lim_{d\rightarrow \infty}(2d\beta_c)B(\beta_c)=1$.} We briefly explain how to obtain this slightly better bound for sake of completeness. The lower bound on $\big|\tfrac{\mathrm{d}\chi^{-1}(\beta)}{\mathrm{d}\beta}\big|$ is based on an ``improved'' bound\footnote{{\color{black}The first bound on $|U^4_\beta|$ obtained in \cite{AizenmanGeometricAnalysis1982,FrohlichTriviality1982} comes with a $\langle \sigma_{x_1}\sigma_{u}\rangle_{\beta}\langle \sigma_{x_3}\sigma_{v}\rangle_{\beta}+\langle \sigma_{x_1}\sigma_{v}\rangle_{\beta}\langle \sigma_{x_3}\sigma_{u}\rangle_{\beta}$ instead of $\dfrac{\partial\langle \sigma_{x_1}\sigma_{x_3}\rangle_\beta}{\partial(\beta J_{u,v})}
$. The latter is smaller than the former by Lebowitz' inequality \cite{Lebowitz1974Inequ}.}} on $U_\beta^4$ \cite[(4.1')]{AizenmanGrahamRenormalizedCouplingSusceptibilityd=4-1983}: for $x_1,x_2,x_3,x_4\in \mathbb Z^d$,
\begin{multline}\label{eq: rem0}
	|U_4^\beta(x_1,x_2,x_3,x_4)|\leq \sum_{u,v\in \mathbb Z^d}J_{u,v}\langle \sigma_{x_4}\sigma_v\rangle_\beta\langle \sigma_{x_2}\sigma_v\rangle_\beta (\beta J_{u,v})\dfrac{\partial\langle \sigma_{x_1}\sigma_{x_3}\rangle_\beta}{\partial(\beta J_{u,v})}
	\\+\langle \sigma_{x_1}\sigma_{x_2}\rangle_\beta\langle \sigma_{x_1}\sigma_{x_3}\rangle_\beta \langle \sigma_{x_1}\sigma_{x_4}\rangle_\beta + \langle \sigma_{x_3}\sigma_{x_1}\rangle_\beta\langle \sigma_{x_3}\sigma_{x_2}\rangle_\beta \langle \sigma_{x_3}\sigma_{x_4}\rangle_\beta.
\end{multline}
As in \eqref{eq: proof thm 1 (1)} and below, and using Lebowitz' inequality $U_4^\beta\leq 0$ \cite{Lebowitz1974Inequ}, we write
\begin{equation}\label{eq: rem1}
	\left|\frac{\mathrm{d}\chi^{-1}(\beta)}{\mathrm{d}\beta}\right|=|J|-\frac{1}{2}\frac{1}{\chi(\beta)^2}\sum_{x,y,z\in \mathbb Z^d}J_{y,z}|U_4^\beta(0,y,x,z)|.
\end{equation}
Injecting \eqref{eq: rem0} in \eqref{eq: rem1},
\begin{align}
	\left|\frac{\mathrm{d}\chi^{-1}(\beta)}{\mathrm{d}\beta}\right|&\geq |J|-\frac{1}{2}\frac{1}{\chi(\beta)^2}\sum_{x,y,z,u,v\in \mathbb Z^d}J_{y,z}\langle \sigma_z\sigma_v\rangle_\beta\langle \sigma_y\sigma_v\rangle_\beta(\beta J_{u,v})\dfrac{\partial\langle \sigma_0\sigma_{x}\rangle_\beta}{\partial(\beta J_{u,v})} \notag
	\\&\qquad -\frac{1}{2}\frac{1}{\chi(\beta)^2}\sum_{x,y,z\in \mathbb Z^d}J_{y,z}\langle \sigma_0\sigma_x\rangle_\beta\langle\sigma_0\sigma_y\rangle_\beta\langle \sigma_0\sigma_z\rangle_\beta \notag
	\\&\qquad -\frac{1}{2}\frac{1}{\chi(\beta)^2}\sum_{x,y,z\in \mathbb Z^d}J_{y,z}\langle \sigma_x\sigma_0\rangle_\beta\langle\sigma_x\sigma_y\rangle_\beta\langle \sigma_x\sigma_z\rangle_\beta \notag
	\\&=:|J|-(I)-(II)-(III).\label{eq: rem3}
\end{align}
Now, using translation invariance\footnote{In \cite{AizenmanGrahamRenormalizedCouplingSusceptibilityd=4-1983}, the authors apply the Cauchy--Schwarz inequality and get $B(\beta)$ instead of $B^{(\mathbf{e}_1)}(\beta)$.}
\begin{align}
	(I)&=\frac{1}{2}\frac{1}{\chi(\beta)^2}\sum_{x,u,v\in \mathbb Z^d}(\beta J_{u,v})\dfrac{\partial\langle \sigma_0\sigma_{x}\rangle_\beta}{\partial(\beta J_{u,v})}\Big(\sum_{y\in \mathbb Z^d}\sum_{|w|_2=1}\langle \sigma_y\sigma_v\rangle_\beta\langle\sigma_{y+w}\sigma_v\rangle_\beta\Big)
	\\&=(2d\beta) B^{(\mathbf{e}_1)}(\beta)\left|\frac{\mathrm{d}\chi^{-1}(\beta)}{\mathrm{d}\beta}\right|,\label{eq: rem4}
\end{align}
where we recall that $B^{(\mathbf{e}_1)}(\beta)=\sum_{x\in \mathbb Z^d}\langle \sigma_0\sigma_x\rangle_\beta\langle \sigma_{\mathbf{e}_1}\sigma_x\rangle_\beta$.
Moreover, using the Cauchy--Schwarz inequality
{\color{black}
\begin{align}
	\sum_{x,y,z\in \mathbb Z^d}J_{y,z}\langle \sigma_0\sigma_x\rangle_\beta\langle\sigma_0\sigma_y\rangle_\beta\langle \sigma_0\sigma_z\rangle_\beta&=\chi(\beta)\sum_{y,z\in \mathbb Z^d}J_{y,z}\langle\sigma_0\sigma_y\rangle_\beta\langle \sigma_0\sigma_z\rangle_\beta\notag
	\\&\leq \chi(\beta)\Big(\sum_{y,z\in \mathbb Z^d}J_{y,z}\langle \sigma_0\sigma_y\rangle_{\beta}^2\Big)^{1/2}\Big(\sum_{y,z\in \mathbb Z^d}J_{y,z}\langle \sigma_0\sigma_z\rangle_{\beta}^2\Big)^{1/2}\notag
	\\&=\chi(\beta)|J|B(\beta).
\end{align}
A similar computation can be made to bound $(III)$. This gives
}
\begin{equation}\label{eq: rem5}
	(II)+(III)\leq \frac{|J|B(\beta)}{\chi(\beta)}.
\end{equation}
Plugging \eqref{eq: rem4} and \eqref{eq: rem5} in \eqref{eq: rem3}, {\color{black} and using that $|J|=2d$} gives
\begin{equation}
	\left|\frac{\mathrm{d}\chi^{-1}(\beta)}{\mathrm{d}\beta}\right|\geq \frac{2d}{1+(2d\beta_c)B^{(\mathbf{e}_1)}(\beta_c)}\Big(1-\frac{B(\beta_c)}{\chi(\beta)}\Big).
\end{equation}
Since $B(\beta_c)<\infty$ by \eqref{eq: IRB}, one gets
\begin{equation}
	\lim_{\beta\nearrow \beta_c}\left|\frac{\mathrm{d}\chi^{-1}(\beta)}{\mathrm{d}\beta}\right|\geq \frac{2d}{1+(2d\beta_c)B^{(\mathbf{e}_1)}(\beta_c)},
\end{equation}
which immediately yields the improvement on \eqref{eq: rem-1} by \eqref{eq: rem-2}.
\end{Rem}
\subsection{Proof of Lemma \ref{lem: localization step}}
The rest of the section is devoted to the proof of the \emph{localization} result of Lemma \ref{lem: localization step}. We will use the notion of
 \emph{backbone} of a current introduced in \cite[Section~4]{AizenmanBarskyFernandezSharpnessIsing1987} (see also \cite{AizenmanDuminilTassionWarzelEmergentPlanarity2019,PanisTriviality2023}).
A current $\n$ on a $\Lambda\subset \mathbb Z^d$ finite with sources $\sn=\{x,y\}$ has at least one path from $x$ to $y$ in the percolation configuration induced by $\n$. The backbone of $\n$ is an exploration of one such path. We fix an arbitrary ordering $\prec$ of the edges of $E(\Lambda)$.

\begin{Def}
\label{def:backbone}
Let $\n\in \Omega_{\Lambda}$ with $\sn=\lbrace x,y\rbrace$. The \emph{backbone}
of $\n$, which we denote by $\Gamma(\n)$, is the unique oriented and edge self-avoiding path
from $x$ to $y$, supported on edges with $\n$ odd, which is minimal for $\prec$.
The backbone $\Gamma(\n)=\{x_ix_{i+1}:  0\leq i < k\}$ is obtained via the following procedure:
\begin{enumerate}
    \item[$(i)$] Set $x_0=x$. The first edge $x_0x_1$ of $\Gamma(\n)$ is the earliest of all the edges $e$ emerging from $x_0$ for which $\n_e$ is odd.
    All edges $e\ni x_0$ satisfying $e\prec x_0x_1$ and $e\neq x_0x_1$ are explored and have $\n_e$ even.
    \item[$(ii)$] Each edge $x_ix_{i+1}$, $i\ge 1$, is the first of all edges $e$ emerging from $x_i$ that have not been explored previously, and for which $\n_e$ is odd.
    \item[$(iii)$] The exploration stops when it reaches $y=x_k$ for the first time.
\end{enumerate}
Let $\overline{\Gamma}(\n)$ denote the set of explored edges, i.e. $\Gamma(\n)$ together with all explored even edges.
\end{Def}

By definition, $\overline \Gamma(\n)$ is determined by $\Gamma(\n)$ and the ordering $\prec$,
and similarly $\Gamma(\n)$ can be recovered from $\overline\Gamma(\n)$ by choosing odd edges in $\overline\Gamma(\n)$ according to $\prec$.
The current $\n\setminus \overline \Gamma(\n)$, defined to be the restriction of $\n$ to the complement of $\overline{\Gamma}(\n)$, is sourceless. Moreover,  we can write
\begin{equation}\label{eq: backbone exp}
    \langle \sigma_x\sigma_y\rangle_{\Lambda,\beta}=\sum_{\gamma :x \rightarrow y}\rho_\Lambda(\gamma),
\end{equation}
where for a path $\gamma:x \rightarrow y$,
\begin{equation}
    \rho_\Lambda(\gamma):=\langle \sigma_x\sigma_y\rangle_\beta\mathbf P_{\Lambda,\beta}^{xy}[\Gamma(\n)=\gamma].
\end{equation}
When $\Lambda=\mathbb Z^d$, we write $\rho(\gamma)=\rho_{\Lambda}(\gamma)$.
We will use the following useful property of backbones, often referred to as the \emph{chain rule} (see \cite{AizenmanBarskyFernandezSharpnessIsing1987}).
\begin{Prop}[Chain rule for the backbone]\label{prop: chain rule}  Let $\Lambda \subset \mathbb Z^d$. Let $x,y,u,v\in \Lambda$. Then,
\begin{equation}
    \mathbf{P}^{xy}_{\Lambda,\beta}[\Gamma(\n) \textup{ passes through }u \textup{ first and then through }v]\leq \frac{\langle \sigma_x\sigma_u\rangle_{\Lambda,\beta}\langle \sigma_u\sigma_v\rangle_{\Lambda,\beta}\langle \sigma_v\sigma_y\rangle_{\Lambda,\beta}}{\langle \sigma_x\sigma_y\rangle_{\Lambda,\beta}}.
\end{equation}
In particular, if $u=v$, 
\begin{equation}
	\mathbf{P}^{xy}_{\Lambda,\beta}[\Gamma(\n) \textup{ passes through }u]\leq \frac{\langle \sigma_x\sigma_u\rangle_{\Lambda,\beta}\langle \sigma_u\sigma_y\rangle_{\Lambda,\beta}}{\langle \sigma_x\sigma_y\rangle_{\Lambda,\beta}}.
\end{equation}
\end{Prop}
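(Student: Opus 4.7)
The plan is to derive both inequalities from the Switching Lemma~\ref{lem: switching lemma}. I first handle the special case $u=v$, which contains the essential mechanism, and then reduce the general case to it.

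If $\Gamma(\n_1)\ni u$, then the portion of $\Gamma(\n_1)$ from $u$ to $y$ is an edge self-avoiding path on odd edges of $\n_1$; its edge-indicator $\m$ satisfies $\m\le\n_1$ and $\partial\m=\{u,y\}$, so for any sourceless auxiliary current $\n_2$ one has $(\n_1+\n_2)\in\mathcal F_{\{u,y\}}$. Writing $\mathbf{P}^{xy}_{\Lambda,\beta}[\Gamma(\n)\ni u]=(Z^{xy}_{\Lambda,\beta})^{-1}\sum_{\sn_1=\{x,y\}}w_\beta(\n_1)\mathds{1}_{\Gamma(\n_1)\ni u}$ and multiplying by $Z^\emptyset_{\Lambda,\beta}$,
\[
Z^\emptyset_{\Lambda,\beta}\,Z^{xy}_{\Lambda,\beta}\,\mathbf{P}^{xy}_{\Lambda,\beta}[\Gamma(\n)\ni u]\le \sum_{\substack{\sn_1=\{x,y\}\\\sn_2=\emptyset}}w_\beta(\n_1)w_\beta(\n_2)\,\mathds{1}_{(\n_1+\n_2)\in\mathcal F_{\{u,y\}}}.
\]
Applying Lemma~\ref{lem: switching lemma} with $(S_1,S_2)=(\{x,u\},\{u,y\})$ (so $S_1\Delta S_2=\{x,y\}$) and test function $F\equiv 1$ rewrites the right-hand side as $Z^{xu}_{\Lambda,\beta}\,Z^{uy}_{\Lambda,\beta}$. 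Dividing by $Z^\emptyset_{\Lambda,\beta}\,Z^{xy}_{\Lambda,\beta}$ and using \eqref{equation correlation rcr} yields the second inequality.

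For the general case, I would iterate by exploiting the sequential nature of the backbone exploration in Definition~\ref{def:backbone}. Conditional on the initial segment $\gamma_1\colon x\to u$ of $\Gamma(\n)$ and the explored set $\overline\gamma_1$, the restriction of $\n$ to $E(\Lambda)\setminus\overline\gamma_1$ is distributed as a current on the subgraph $\Lambda\setminus\overline\gamma_1$ with sources $\{u,y\}$, whose backbone in the inherited ordering $\prec$ coincides with the continuation of $\Gamma(\n)$ past $u$. The single-vertex bound applied to this restricted current controls the conditional probability that the continuation passes through $v$, and summing over admissible $\gamma_1$ by means of the backbone expansion~\eqref{eq: backbone exp} (which yields in particular $\sum_{\gamma_1\colon x\to u}\rho_\Lambda(\gamma_1)=\langle\sigma_x\sigma_u\rangle_{\Lambda,\beta}$) recovers the first inequality.

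The main obstacle will be handling the bookkeeping in this reduction: the single-vertex bound applied to the continuation introduces restricted two-point functions $\langle\sigma_\cdot\sigma_\cdot\rangle_{\Lambda\setminus\overline\gamma_1,\beta}$, which appear in the numerator (where Griffiths' monotonicity provides an upper bound by the full-domain correlation, in the correct direction) but also in the denominator, where an exact cancellation with the normalization coming from the conditioning on $\gamma_1$ is required to produce the clean ratio $\langle\sigma_x\sigma_u\rangle\langle\sigma_u\sigma_v\rangle\langle\sigma_v\sigma_y\rangle/\langle\sigma_x\sigma_y\rangle$. An alternative route uses two successive applications of Lemma~\ref{lem: switching lemma} with two independent auxiliary sourceless currents $\n_2,\n_3$, exploiting that the event $\{\Gamma(\n_1)\ni u,v\text{ in order}\}$ simultaneously forces $(\n_1+\n_2)\in\mathcal F_{\{v,y\}}$ (via the portion $v\to y$ of $\Gamma(\n_1)$) and $(\n_1+\n_3)\in\mathcal F_{\{u,v\}}$ (via $u\to v$); the difficulty there is that the test function carried through each switching must depend only on the switched pair's sum, which requires organising the two constraints so that they are compatible with the two-step switching arithmetic that produces the target source configuration $(\{x,u\},\{u,v\},\{v,y\})$.
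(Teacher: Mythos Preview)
The paper does not prove this proposition; it simply states it and cites \cite{AizenmanBarskyFernandezSharpnessIsing1987}, where the chain rule originates. So there is no in-paper proof to compare against.

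Your argument for the case $u=v$ via the switching lemma is correct and clean.

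For the general case, your approach (a) --- conditioning on the initial segment $\gamma_1$ and applying the one-point bound to the continuation --- is the standard route, and the bookkeeping you worry about in fact resolves exactly. Writing $\Lambda'$ for the graph $\Lambda$ with the edges of $\overline{\gamma_1}$ removed, the Markov property of the exploration gives the identity
\[
Z^{xy}_{\Lambda,\beta}\,\mathbf{P}^{xy}_{\Lambda,\beta}[\Gamma\text{ starts with }\gamma_1]
= w(\gamma_1)\,Z^{uy}_{\Lambda',\beta}
= w(\gamma_1)\,\langle\sigma_u\sigma_y\rangle_{\Lambda',\beta}\,Z^{\emptyset}_{\Lambda',\beta},
\]
with $w(\gamma_1)=\prod_{e\in\gamma_1}\sinh\beta\prod_{e\in\overline{\gamma_1}\setminus\gamma_1}\cosh\beta$. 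The factor $\langle\sigma_u\sigma_y\rangle_{\Lambda',\beta}$ cancels \emph{exactly} against the denominator produced by your one-point bound on $\Lambda'$, so after applying Griffiths to the two numerator correlations you are left with
\[
\mathbf{P}^{xy}_{\Lambda,\beta}[\Gamma\ni u\text{ then }v]\le \frac{\langle\sigma_u\sigma_v\rangle_{\Lambda,\beta}\langle\sigma_v\sigma_y\rangle_{\Lambda,\beta}}{\langle\sigma_x\sigma_y\rangle_{\Lambda,\beta}}\cdot\frac{1}{Z^{\emptyset}_{\Lambda,\beta}}\sum_{\gamma_1\colon x\to u}w(\gamma_1)\,Z^{\emptyset}_{\Lambda',\beta}.
\]
The remaining sum equals $Z^{xu}_{\Lambda,\beta}$ on the nose: for every current $\n$ with $\sn=\{x,u\}$ the exploration from $x$ determines a unique admissible $\gamma_1$ (the segment up to the first visit of $u$), and conversely the event $A(\gamma_1)=\{\n\text{ odd on }\gamma_1,\text{ even on }\overline{\gamma_1}\setminus\gamma_1\}$ together with sourcelessness on the complement exhausts all such $\n$. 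So no inequality is lost at this step, and the first display of the proposition follows. Your alternative route (b) via two simultaneous switchings does face the genuine obstacle you identify --- the second indicator is not a function of the sum being switched --- and is best abandoned in favour of (a).
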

{\color{black}We now turn to the proof of Lemma \ref{lem: localization step}. Recall that
\begin{equation}
	\mathcal A_k=\Big\{\n \in \Omega_{\mathbb Z^d}, \: \mathbf C_{\n_{|\Lambda_k}}(0)\cap \mathbf C_{\n_{|\Lambda_k}}(\mathbf e_1)=\emptyset\Big\}.
\end{equation}}
\begin{proof}[Proof of Lemma~\textup{\ref{lem: localization step}}] Clearly $\{ \mathbf{C}_{\n_1+\n_2}(0)\cap \mathbf{C}_{\n_1+\n_2}(\mathbf{e}_1)=\emptyset\}\subset \mathcal A_k$. It then suffices to analyse the event $\mathcal B:=\mathcal A_k\cap \{\mathbf{C}_{\n_1+\n_2}(0)\cap \mathbf{C}_{\n_1+\n_2}(\mathbf{e}_1)\neq\emptyset\}$. Note that if the two clusters intersect, there must be a path from $\mathbf{e}_1$ to $\Gamma(\n_1)$ that does not use any edge of $\overline{\Gamma}(\n_1)$, except maybe for the last edge:
\begin{equation}\label{eq: inclusion lem local}
	\{\mathbf{C}_{\n_1+\n_2}(0)\cap \mathbf{C}_{\n_1+\n_2}(\mathbf{e}_1)\neq\emptyset\}\subset \bigcup_{\substack{u\in \mathbb Z^d\\v\in \mathbb Z^d\\u\sim v}} \{u\in \Gamma(\n_1)\}\cap \{\mathbf{e}_1\connect{\n_1+\n_2\setminus \overline \Gamma(\n_1)\:}v\}\cap \{(\n_1+\n_2)_{u,v}>0\}.
\end{equation}
Let $\mathcal B_{u,v}:=\mathcal A_k\cap \{u\in \Gamma(\n_1)\}\cap \{\mathbf{e}_1\connect{\n_1+\n_2\setminus \overline \Gamma(\n_1)\:}v\}\cap \{(\n_1+\n_2)_{u,v}>0\}$. Let $m=k^\delta$ for some small $\delta>0$ to be fixed.

$\bullet$ We first look at the case $u\notin \Lambda_m$, see Figure \ref{figure:localisation1} for an illustration. Exploring $\Gamma(\n_1)$, and forgetting about $ \mathcal B\cap \{(\n_1+\n_2)_{u,v}>0\}$, we get\footnote{A similar argument is used in the proof of \cite[Lemma~6.13]{PanisTriviality2023}.}
\begin{align}
	\langle \sigma_0\sigma_x\rangle_{\beta}\langle \sigma_{\mathbf{e}_1}\sigma_y\rangle_\beta\mathbf P_\beta^{0x,\mathbf{e}_1 y}[\mathcal B_{u,v}]&\leq\sum_{\substack{\gamma:0\rightarrow x\\ \gamma\ni u}}\rho(\gamma)\langle \sigma_{\mathbf{e}_1}\sigma_y\rangle_\beta\mathbf{P}^{\emptyset,\mathbf{e}_1y}_{\overline\gamma^c,\mathbb Z^d,\beta}[\mathbf{e}_1\connect{\n_1+\n_2\setminus \overline \gamma\:}v]\notag
	\\&=\sum_{\substack{\gamma:0\rightarrow x\\ \gamma\ni u}}\rho(\gamma)\langle \sigma_{\mathbf{e}_1}\sigma_v\rangle_{\overline{\gamma}^c,\beta}\langle \sigma_v\sigma_y\rangle_\beta\notag
	\\&\leq \langle \sigma_{\mathbf{e}_1}\sigma_v\rangle_\beta\langle \sigma_v\sigma_y\rangle_\beta\sum_{\substack{\gamma_1:0\rightarrow u\\\gamma_2:u\rightarrow x}}\rho(\gamma_1\circ \gamma_2)\notag
	\\&\leq \langle \sigma_{\mathbf{e}_1}\sigma_v\rangle_\beta\langle \sigma_v\sigma_y\rangle_\beta\langle \sigma_{0}\sigma_u\rangle_\beta\langle \sigma_u\sigma_x\rangle_\beta \label{eq: bound u not in lambda m},
\end{align}
where we used the switching lemma on the second line, Griffiths' inequality on the third line, and Proposition \ref{prop: chain rule} on the last line. As a result, using the infrared bound \eqref{eq: IRB}, there exists $C_1=C_1(d)>0$ such that
\begin{equation}\label{eq:bound u far away}
	\frac{1}{\chi(\beta)^2}\sum_{|x|,|y|\geq \ell} \sum_{u\notin \Lambda_m}\sum_{v\sim u}\langle \sigma_0\sigma_x\rangle_{\beta}\langle \sigma_{\mathbf{e}_1}\sigma_y\rangle_\beta\mathbf P_\beta^{0x,\mathbf{e}_1 y}[\mathcal B_{u,v}]\leq \sum_{u\notin \Lambda_m}\sum_{v\sim u}\langle \sigma_0\sigma_u\rangle_\beta\langle \sigma_{\mathbf{e_1}}\sigma_u\rangle_\beta\leq \frac{C_1}{m^{d-4}}.
\end{equation}

\begin{figure}
	\begin{center}
		\includegraphics{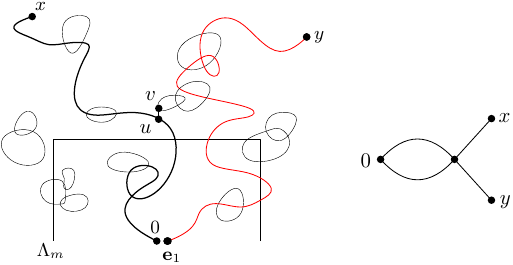}
		\caption{On the left, an illustration of a configuration realizing $\mathcal B_{u,v}$ for $u\notin \Lambda_m$. The backbone of $\n_1$ (resp. $\n_2$) is the black (resp. red) bold line. A string of loops connects $\Gamma(\n_2)$ to $\Gamma(\n_1)$ in $\n_1+\n_2$ without using any edges of $\overline \Gamma(\n_1)$. On the right, a diagrammatic representation of the bound obtained in \eqref{eq: bound u not in lambda m}.}
		\label{figure:localisation1}
	\end{center}
\end{figure}

$\bullet$ We now turn to the case $u\in \Lambda_m$. We introduce an additional intermediate scale $M=m^A\leq k$ with $A>0$ large enough\footnote{Since $M\leq k$, one has $A\delta\leq 1$. Hence, how large $A$ can be taken will depend of how small $\delta$ is chosen.} to be fixed.  {\color{black}We now introduce the following events: 
\begin{enumerate}
	\item[-]$\mathcal G_1:$ one of the backbones $\Gamma(\n_1)$ or $\Gamma(\n_2)$ does two successive crossings of $\textup{Ann}(M,k)$,
	\item[-]$\mathcal G_2:$ the event $\mathcal G_1$ does not occur and there exist $s\in \partial \Lambda_M$ and $t\in \partial \Lambda_m$ such that $s \connect{\n_1+\n_2\setminus(\overline \Gamma(\n_1)\cup\overline\Gamma(\n_2))\:}t$.
\end{enumerate}

We now prove the following claim. See Figure \ref{fig:figure loc2} for an illustration.
\begin{Claim}\label{claim} Let $u\in \Lambda_m$ and $v\sim u$. If the event $\mathcal B_{u,v}$ occurs, then $\mathcal G_1$ or $\mathcal G_2$ must occur.
\end{Claim}
\begin{proof}[Proof of Claim \textup{\ref{claim}}] Assume that the event $\mathcal B_{u,v}$ occurs but not $\mathcal G_1$. We prove that the event $\mathcal G_2$ must occur. 

First, observe that since the event $\mathcal G_1$ does not occur, if $w\in \Lambda_M$ is visited by $\Gamma(\n_i)$ ($i=1,2$), then the portion of $\Gamma(\n_i)$ from $0$ to $w$ is entirely contained in $\Lambda_k$. In particular, the portion of $\Gamma(\n_1)$ from $0$ to $u$ is included in $\Lambda_k$.

 Since the event $\mathcal B_{u,v}$ occurs, there is an open path from $\mathbf{e}_1$ to $v$ in $\n_1+\n_2\setminus \overline\Gamma(\n_1)$. We call $\gamma$ such an open path. By the first observation, the path $\gamma$ must exit $\Lambda_M$ before reaching $v$. Indeed, if not, since $\{u\in \Gamma(\n_1)\}\cap \{(\n_1+\n_2)_{u,v}>0\}$ also occurs, it would create a connection from $\mathbf{e_1}$ to $0$ which is entirely contained in $\Lambda_k$, and would contradict the occurrence of $\mathcal A_k$. 
 
 Call $s$ the last vertex of $\partial \Lambda_M$ visited by $\gamma$ before reaching $v$, and $\tilde{\gamma}$ the portion of $\gamma$ between $s$ and $v$. We now claim that $\tilde{\gamma}$--- which by definition does not use any edge of $\overline{\Gamma}(\n_1)$--- cannot use any edge of $\overline{\Gamma}(\n_2)$. Assume by contradiction that it goes through some edge $e=ab\in \overline{\Gamma}(\n_2)$ with $b\in \Gamma(\n_2)$. Concatenating the portion of $\Gamma(\n_2)$ from $\mathbf{e}_1$ to $b$ (which is entirely contained in $\Lambda_k$ by the first observation), and the portion of $\tilde{\gamma}$ from $b$ to $v$ creates a connection from $\mathbf{e}_1$ to $0$ in $\Lambda_k$: this contradicts this occurrence of $\mathcal A_k$ and concludes the proof. 
\end{proof}
}
% We claim that if $\mathcal B_{u,v}$ occurs then one of the following events occurs (see Figure \ref{fig:figure loc2}),
%\begin{enumerate}
%	\item[-]$\mathcal G_1:$ the backbone $\Gamma(\n_2)$ does two successive crossings of $\textup{Ann}(M,k)$,
%	\item[-]$\mathcal G_2:$ the event $\mathcal F_1$ does not occur and there exist $s\in \partial \Lambda_M$ and $t\in \partial \Lambda_m$ such that $s \connect{\n_1+\n_2\setminus(\overline \Gamma(\n_1)\cup\overline\Gamma(\n_2))\:}t$.
%\end{enumerate}
%Indeed, explore a path from $\mathbf{e}_1$ to $v$ in $\n_1+\n_2\setminus \overline\Gamma(\n_1)$. As mentioned above, this path must reach $\partial \Lambda_M$ a last time at a vertex $s$ before reaching $v$. If {\color{red}the event} $\mathcal G_1$ does not occur, then $s$ must be connected to $v$ without using any edges of $\overline\Gamma(\n_2)$. If not, there would be a path 
%	inside $\Lambda_M$ connecting $s$ to both $v$ and a point\footnote{We are using the fact that any edge of $\overline\Gamma(\n_2)$ is incident to a vertex of $\Gamma(\n_2)$.} of $\Gamma(\n_2)$. Since $(\n_1+\n_2)_{u,v}>0$ this would create a connection between $0$ and $\mathbf{e}_1$ that is entirely included in $\Lambda_k$, which is impossible by assumption. 

We begin with a bound of $\mathcal G_1$. A similar bound was already used in \cite[(4.21)]{AizenmanDuminilTriviality2021}. {\color{black}Let us write for $i=1,2$, $\mathcal G_1(i):=\{\Gamma(\n_i) \textup{ does two successive crossings of Ann}(M,k)\}$, so that $\mathcal G_1\subset \mathcal G_1(1)\cup \mathcal G_1(2)$.} By the chain rule of Proposition \ref{prop: chain rule},
\begin{align}
	\langle \sigma_0\sigma_x\rangle_{\beta}\langle \sigma_{\mathbf{e}_1}\sigma_y\rangle_\beta\mathbf P_\beta^{0x,\mathbf{e}_1 y}[\mathcal G_1{\color{black}(1)}]&=\langle \sigma_0\sigma_x\rangle_{\beta}\langle \sigma_{\mathbf{e}_1}\sigma_y\rangle_\beta \mathbf P_\beta^{{\color{black}0x}}[\mathcal G_1{\color{black}(1)}]\notag
	\\&\leq {\color{black}\langle \sigma_{\mathbf{e}_1}\sigma_y\rangle_\beta} \sum_{\substack{a\in \partial \Lambda_k\\b \in \partial \Lambda_M}}\langle \sigma_{{\color{black}0}}\sigma_a\rangle_\beta\langle \sigma_a\sigma_b\rangle_\beta\langle \sigma_b\sigma_{{\color{black}x}}\rangle_\beta.\label{eq:bound g1(1)}
\end{align}
Using once again the infrared bound \eqref{eq: IRB}, there exists $C_2=C_2(d)>0$ such that,
\begin{equation}\label{eq: bound F_1}
	\overline{\mathbf P}^{(\ell)}_\beta[\mathcal G_1{\color{black}(1)}]\leq \sum_{\substack{a\in \partial \Lambda_k\\b \in \partial \Lambda_M}}\langle \sigma_{{\color{black}0}}\sigma_a\rangle_\beta\langle \sigma_a\sigma_b\rangle_\beta \leq C_2\frac{k^{d-1}M^{d-1}}{k^{2d-4}}=C_2\frac{M^{d-1}}{k^{d-3}}=C_2\frac{k^{(d-1)\delta A}}{k^{d-3}}.
\end{equation}
{\color{black}A similar computation allows to bound $\overline{\mathbf P}^{(\ell)}_\beta[\mathcal G_1{\color{black}(2)}]$ so that we obtain $C_3=C_3(d)>0$ such that
\begin{equation}\label{eq: bound F_1}
	\overline{\mathbf P}^{(\ell)}_\beta[\mathcal G_1]\leq  C_3\frac{k^{(d-1)\delta A}}{k^{d-3}}.
\end{equation}
}
\begin{figure}
	\begin{center}
		\includegraphics{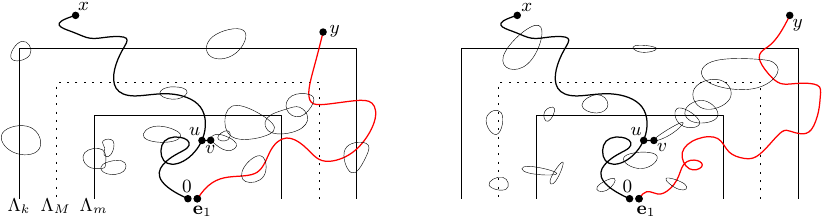}
		\caption{An illustration of the events $\mathcal G_1$ and $\mathcal G_2$. The backbone of $\n_1$ (resp. $\n_2$) is the black (resp. red) bold line. On the left, $\Gamma(\n_2)$ crosses $\text{Ann}(M,k)$ twice and a string of loops creates a connection between the clusters of $0$ and $\mathbf{e}_1$. On the right, {\color{black} $\Gamma(\n_1)$ and }$\Gamma(\n_2)$ only cross $\text{Ann}(M,k)$ once. This forces the existence of a crossing of $\textup{Ann}(m,M)$ in $\n_1+\n_2\setminus \overline\Gamma(\n_1)\cup\overline\Gamma(\n_2)$.}
		\label{fig:figure loc2}
	\end{center}
\end{figure}
We turn to the bound of $\mathcal{G}_2$. Write,
\begin{align}\label{eq: proof localisation f2 (1°}
	\langle \sigma_0\sigma_x\rangle_{\beta}\langle \sigma_{\mathbf{e}_1}\sigma_y\rangle_\beta\mathbf P_\beta^{0x,\mathbf{e}_1 y}[\mathcal G_2]
	&\leq \sum_{\substack{s\in \partial \Lambda_M\\ t \in \partial \Lambda_m}}\langle \sigma_0\sigma_x\rangle_{\beta}\langle \sigma_{\mathbf{e}_1}\sigma_y\rangle_\beta\mathbf P_\beta^{0x,\mathbf{e}_1y}[s \connect{\n_1+\n_2\setminus(\overline \Gamma(\n_1)\cup\overline\Gamma(\n_2))\:}t].
\end{align}
{\color{black}Let ${C(s,t) :=\{s \connect{\n_1+\n_2\setminus(\overline \Gamma(\n_1)\cup\overline\Gamma(\n_2))\:}t\}}$. The end of the proof relies on the following claim.
\begin{Claim}\label{claim2} One has
\begin{equation}
	\mathbf P_\beta^{0x,\mathbf{e}_1y}[C(s,t)]\leq \langle \sigma_s\sigma_t\rangle_\beta^2.
\end{equation}
\end{Claim}

Recall that for $\Lambda \subset \mathbb Z^d$, $\Omega_\Lambda$ is the set of currents on $E(\Lambda)$. Let us introduce some notations for the partition functions of the random current measures: if $\Lambda,\Lambda'$ are finite subsets of $\mathbb Z^d$, $A\subset \Lambda$ and $B\subset \Lambda'$, $\mathcal E_1\subset \Omega_{\Lambda}$ and $\mathcal E_2\subset \Omega_{\Lambda}\times \Omega_{\Lambda'}$, we let
\begin{align}
Z_{\Lambda,\beta}^A[\mathcal E_1]&=\sum_{\n\in \Omega_\Lambda:\sn=A}w(\n)\mathds{1}\{\n\in \mathcal E_1\},
\\ 
	Z_{\Lambda,\Lambda',\beta}^{A,B}[\mathcal E_2]&=\sum_{\substack{\n_1\in \Omega_{\Lambda}: \sn_1=A\\\n_2\in \Omega_{\Lambda'}:\sn_2=B}}w(\n_1)w(\n_2)\mathds{1}\{(\n_1,\n_2)\in \mathcal E\}.
\end{align}
}
\begin{proof}[Proof of Claim \textup{\ref{claim2}}] We go to partition functions\footnote{For full disclosure, this step requires working in a finite volume $\Lambda$ and then taking the limit $\Lambda\nearrow \mathbb Z^d$. We omit this detail here as it does not affect the argument.} and write
\begin{align}
	Z_\beta^{0x,\mathbf{e}_1y}[C(s,t)]=\sum_{\substack{\gamma_1:0\rightarrow x\\\gamma_2:\mathbf{e}_1\rightarrow y}}Z_\beta^{0x}[\Gamma(\n_1)=\gamma_1]Z_\beta^{\mathbf{e}_1y}[\Gamma(\n_2)=\gamma_2]\mathbf P^{\emptyset,\emptyset}_{\overline \gamma_1^c,\overline \gamma_2^c,\beta}[C(s,t)].
\end{align}
Now, write for $\n_1\in \Omega_{\overline\gamma_1^c}$ and $\n_2\in \Omega_{\overline\gamma_2^c}$
\begin{equation}\notag
	\n_1=\mathbf{k}_1+\m_1, \qquad \mathbf{k}_1(e)=\n_1(e)\mathds{1}\{e\in \overline\gamma_2\} \qquad (e\in \mathbb E),
\end{equation}
\begin{equation}\notag
	\n_2=\mathbf{k}_2+\m_2, \qquad \mathbf{k}_2(e)=\n_2(e)\mathds{1}\{e\in \overline\gamma_1\} \qquad (e\in \mathbb E),
\end{equation}
so that $\m_1$ and $\m_2$ are both currents on $(\overline \gamma_1\cup \overline \gamma_2)^c$. Then,
\begin{align}
	Z^{\emptyset,\emptyset}_{\overline \gamma_1^c,\overline \gamma_2^c,\beta}[C(s,t)]&=\sum_{\mathbf{k}_1,\mathbf{k}_2}w_\beta(\mathbf{k}_1)w_\beta(\mathbf{k}_2)\sum_{\substack{\partial \m_1=\partial \mathbf{k}_1\\ \partial \m_2=\partial \mathbf{k}_2}}w_\beta(\m_1)w_\beta(\m_2)\mathds{1}\{C(s,t)\}\notag
	\\&=\sum_{\mathbf{k}_1,\mathbf{k}_2}w_\beta(\mathbf{k}_1)w_\beta(\mathbf{k}_2)\sum_{\substack{\partial \m_1=\partial \mathbf{k}_1\Delta \{s,t\}\\ \partial \m_2=\partial \mathbf{k}_2\Delta \{s,t\}}}w_\beta(\m_1)w_\beta(\m_2)\mathds{1}\{C(s,t)\}\notag
	\\&=Z^{st,st}_{\overline \gamma_1^c,\overline \gamma_2^c,\beta}[C(s,t)]\leq Z^{st,st}_{\overline \gamma_1^c,\overline \gamma_2^c,\beta},
\end{align}
where on the second line we used a slightly more general version of the switching lemma, called the \emph{switching principle}, see \cite[Lemma~2.1]{AizenmanDuminilTassionWarzelEmergentPlanarity2019}. As a result,
\begin{equation}\label{eq: bound C(s,t)}
	\mathbf P^{\emptyset,\emptyset}_{\overline \gamma_1^c,\overline \gamma_2^c,\beta}[C(s,t)]\leq \langle \sigma_s\sigma_t\rangle_{\overline\gamma_1^c,\beta}\langle \sigma_s\sigma_t\rangle_{\overline\gamma_2^c,\beta}.
	\end{equation}
Collecting the above work and using Griffiths' inequality yields
\begin{equation}\label{eq: proof localisation f2 (2)}
	\mathbf P_\beta^{0x,\mathbf{e}_1y}[C(s,t)]\leq \sum_{\substack{\gamma_1:0\rightarrow x\\\gamma_2:\mathbf{e}_1\rightarrow y}}\mathbf P_\beta^{0x}[\Gamma(\n_1)=\gamma_1]\mathbf P_\beta^{\mathbf{e}_1y}[\Gamma(\n_2)=\gamma_2]\langle \sigma_s\sigma_t\rangle_{\overline\gamma_1^c,\beta}\langle \sigma_s\sigma_t\rangle_{\overline\gamma_2^c,\beta}\leq \langle \sigma_s\sigma_t\rangle_\beta^2,
\end{equation}
{\color{black}which concludes the proof of the claim.}
\end{proof}
{\color{black} Using Claim \ref{claim2} in \eqref{eq: proof localisation f2 (1°}}, averaging over $x$ and $y$, {\color{black}and using the infrared bound \eqref{eq: IRB}} gives $C_4=C_4(d)>0$ such that
\begin{equation}\label{eq: bound f2}
	\overline{\mathbf P}^{(\ell)}_\beta[\mathcal G_2]\leq \sum_{\substack{s\in \partial \Lambda_M\\ t \in \partial \Lambda_m}}\langle \sigma_s\sigma_t\rangle_\beta^2\leq C_3\frac{M^{d-1}m^{d-1}}{M^{2d-4}}=C_4\frac{k^{\delta(d-1)}}{k^{A\delta(d-3)}}.
\end{equation}
{\color{black}Combining \eqref{eq:bound u far away}}, \eqref{eq: bound F_1}, and \eqref{eq: bound f2} gives $C_5=C_5(d),C_6=C_6(d)>0$ such that
\begin{align}
	\frac{1}{\chi(\beta)^2}\sum_{|x|,|y|\geq \ell} \sum_{u\in \Lambda_m}\sum_{v\sim u}\langle \sigma_0\sigma_x\rangle_{\beta}\langle \sigma_{\mathbf{e}_1}\sigma_y\rangle_\beta &\mathbf P_\beta^{0x,\mathbf{e}_1 y}[\mathcal B_{u,v}]\notag\\&\leq \frac{C_5}{k^{\delta(d-4)}}+C_5m^d\left(\frac{k^{(d-1)\delta A}}{k^{d-3}}+\frac{k^{\delta(d-1)}}{k^{A\delta(d-3)}}\right)\notag
	\\&\leq C_6k^{-\eta},\label{eq:bound k to the eta}
\end{align}
for some $\eta>0$ sufficiently small, {\color{black}where we chose $A=3$ and $\delta<\tfrac{d-3}{4d-3}$}. The proof follows readily.
\end{proof}

\subsection{Proof of Theorem \ref{prop:uniform lower bound}}
We conclude this section by proving Theorem \ref{prop:uniform lower bound}. It will be an easy consequence of the following result.

\begin{Prop}\label{prop:equality of A with approximation} Let $d>4$. One has,
\begin{equation}\label{eq:equality of A with approximation}
	\mathbf P^{0\infty,\mathbf{e}_1\infty}[\mathbf C_{\n_1+\n_2}(0)\cap \mathbf C_{\n_1+\n_2}(\mathbf{e}_1)=\emptyset]=\lim_{|x|\wedge |y|\wedge |x-y|\rightarrow \infty}\mathbf P^{0x,\mathbf{e}_1y}_{\beta_c}[\mathbf C_{\n_1+\n_2}(0)\cap \mathbf C_{\n_1+\n_2}(\mathbf{e}_1)=\emptyset].
\end{equation}
\end{Prop}

\begin{proof}[Proof of Theorem \textup{\ref{prop:uniform lower bound}}] Let $c_0:=\tfrac{1}{2}\mathbf P^{0\infty,\mathbf{e}_1\infty}[\mathbf C_{\n_1+\n_2}(0)\cap \mathbf C_{\n_1+\n_2}(\mathbf{e}_1)=\emptyset]$. By \eqref{eq: lower bound on P intro}, one has $c_0>0$. Using Proposition \ref{prop:equality of A with approximation}, there exists $N_0>0$ such that, for every $x,y\in \mathbb Z^d$ with $|x|\wedge|y|\wedge|x-y|\geq N_0$,
\begin{equation}
	\mathbf P^{0x,\mathbf{e}_1y}_{\beta_c}[\mathbf C_{\n_1+\n_2}(0)\cap \mathbf C_{\n_1+\n_2}(\mathbf{e}_1)=\emptyset]\geq c_0.
\end{equation}
This concludes the proof.
\end{proof}
To prove Proposition \ref{prop:equality of A with approximation}, we will need the following lemma. It can be seen as an ``unaveraged'' version of Lemma \ref{lem: localization step}. Recall the definition of $\mathcal A_k$ from \eqref{eq:def Ak}.
\begin{Lem}\label{lem:equality of A with approximation} Let $d>4$. There exist $C,\eta>0$ such that, for every $k\geq 1$, every $x,y\in \mathbb Z^d$ with $x\neq y$ and $|x|,|y|\geq 2k$,
\begin{equation}
	\left|\mathbf P^{0x,\mathbf{e}_1y}_{\beta_c}[\mathbf C_{\n_1+\n_2}(0)\cap \mathbf C_{\n_1+\n_2}(\mathbf{e}_1)=\emptyset]-\mathbf P^{0x,\mathbf{e}_1y}_{\beta_c}[\mathcal A_k]\right|\leq C\left(\frac{1}{|x-y|^{d-4}}\vee\frac{1}{k^\eta}\right).
\end{equation}
\end{Lem}
Proposition \ref{prop:equality of A with approximation} follows almost straightforwardly from Lemma \ref{lem:equality of A with approximation}.
\begin{proof}[Proof of Proposition \textup{\ref{prop:equality of A with approximation}}]
Fix $\beta=\beta_c$ and drop it from the notations. Observe that $(\mathcal A_k)_{k\geq 1}$ is a decreasing sequence of events which satisfies 
\begin{equation}\label{eq:local limit1}
	\bigcap_{k\geq 1}\mathcal A_k=\{\mathbf C_{\n_1+\n_2}(0)\cap \mathbf C_{\n_1+\n_2}(\mathbf{e}_1)=\emptyset\}.
\end{equation}
As a result,
\begin{equation}\label{eq:local limit2}
	\lim_{k\rightarrow \infty}\mathbf P^{0\infty,\mathbf{e}_1\infty}[\mathcal A_k]=\mathbf P^{0\infty,\mathbf{e}_1\infty}[\mathbf C_{\n_1+\n_2}(0)\cap \mathbf C_{\n_1+\n_2}(\mathbf{e}_1)=\emptyset].
\end{equation}
Let $\varepsilon>0$. By \eqref{eq:local limit2}, there exists $k_0>0$ such that, for every $k\geq k_0$, 
\begin{equation}\label{eq:local limit3}
\left|\mathbf P^{0\infty,\mathbf{e}_1\infty}[\mathbf C_{\n_1+\n_2}(0)\cap \mathbf C_{\n_1+\n_2}(\mathbf{e}_1)=\emptyset]-\mathbf P^{0\infty,\mathbf{e}_1\infty}[\mathcal A_{k}]\right|\leq \frac{\varepsilon}{3}.	
\end{equation}
Moreover, by Theorem \ref{thm: IIC rcr}, for every $k\geq 1$, there exists $N_0=N_0(k)>0$ such that, for every $|x|,|y|>N_0$,
\begin{equation}\label{eq:local limit4}
	\left|\mathbf P^{0\infty,\mathbf{e}_1\infty}[\mathcal{A}_{k}]-\mathbf P^{0x,\mathbf{e}_1y}[\mathcal A_{k}]\right|\leq \frac{\varepsilon}{3}.
\end{equation}
Finally, using Lemma \ref{lem:equality of A with approximation}, there exists $\ell=\ell(\varepsilon)\geq k_0$ such that: for every $k\geq \ell$, every $x,y\in \mathbb Z^d$ with $|x|,|y|\geq 2k$ and $|x-y|\geq \ell$,
\begin{equation}\label{eq:local limit5}
		\left|\mathbf P^{0x,\mathbf{e}_1y}[\mathbf C_{\n_1+\n_2}(0)\cap \mathbf C_{\n_1+\n_2}(\mathbf{e}_1)=\emptyset]-\mathbf P^{0x,\mathbf{e}_1y}[\mathcal A_{k}]\right|\leq \frac{\varepsilon}{3}.
\end{equation}
As a consequence of \eqref{eq:local limit3} and \eqref{eq:local limit4} applied to $k=\ell$, together with \eqref{eq:local limit5}, one obtains, for every $x,y\in \mathbb Z^d$ with $|x|,|y|\geq N_0(\ell)$ and $|x-y|\geq \ell$, 
\begin{equation}
	\left|\mathbf P^{0\infty,\mathbf{e}_1\infty}[\mathbf C_{\n_1+\n_2}(0)\cap \mathbf C_{\n_1+\n_2}(\mathbf{e}_1)=\emptyset]-\mathbf P^{0x,\mathbf{e}_1y}[\mathbf C_{\n_1+\n_2}(0)\cap \mathbf C_{\n_1+\n_2}(\mathbf{e}_1)=\emptyset]\right|\leq \varepsilon,
\end{equation}
which concludes the proof.

\end{proof}

We are left with the proof of Lemma \ref{lem:equality of A with approximation}. We rely on the lower bound obtained in \cite{duminil2024new} which, combined with \eqref{eq: IRB}, gives: if $d>4$, there exist $c,C>0$ such that, for every $x\in \mathbb Z^d\setminus\{0\}$,
\begin{equation}\label{eq:up to constant}
	\frac{c}{|x|^{d-2}}\leq \langle \sigma_0\sigma_x\rangle_{\beta_c}\leq \frac{C}{|x|^{d-2}}.
\end{equation}

\begin{proof}[Proof of Lemma \textup{\ref{lem:equality of A with approximation}}]

Fix $\beta=\beta_c$ and drop it from the notations. In the proof, the constant $C$ only depends on $d$ and may change from line to line. Let $k\geq 1$. Recall the definition of $\mathcal A_k$ from \eqref{eq:def Ak}. We will prove the existence of $C,\eta>0$ such that, for every $|x|,|y|\geq 2k$ with $x\neq y$,
\begin{equation}\label{eq:unaveraged0}
	\mathbf P^{0x,\mathbf{e}_1y}[\mathcal B]\leq C\left(\frac{1}{|x-y|^{d-4}}\vee\frac{1}{k^\eta}\right),
\end{equation}
where we recall that $\mathcal B=\mathcal A_k\cap \{\mathbf{C}_{\n_1+\n_2}(0)\cap \mathbf{C}_{\n_1+\n_2}(\mathbf{e}_1)\neq\emptyset\}$.

To prove this result, we import intermediate results and notations from the proof of Lemma \ref{lem: localization step}. Let $\eta>0$ be the constant of Lemma \ref{lem: localization step}. Recall the definitions of $m=k^{\delta}$ and $M=m^A\leq k$ for $A>1,\delta\in(0,1)$. By \eqref{eq:bound k to the eta} we may choose $A,\delta$ so that, for every $k\geq 1$,
\begin{equation}\label{eq:unaveraged1}
	\frac{1}{m^{d-4}}+m^d\left(\frac{k^{(d-1)\delta A}}{k^{d-3}}+\frac{k^{\delta(d-1)}}{k^{A\delta(d-3)}}\right)\leq \frac{C}{k^{\eta}}.
\end{equation}
As in the proof of Lemma \ref{lem: localization step}, we write,
\begin{equation}\label{eq:unaveraged2}
	\mathbf P^{0x,\mathbf{e}_1y}[\mathcal B]\leq \sum_{\substack{u\notin \Lambda_m\\v\sim u}}\mathbf{P}^{0x,\mathbf{e}_1y}[\mathcal B_{u,v}]+Cm^d\mathbf P^{0x,\mathbf{e}_1}[\mathcal G_1\cup \mathcal G_2].
\end{equation}

$\bullet$ Using \eqref{eq: bound u not in lambda m}, we find that
\begin{equation}\label{eq:unaveraged3}
	\sum_{\substack{u\notin \Lambda_m\\v\sim u}}\mathbf{P}^{0x,\mathbf{e}_1y}[\mathcal B_{u,v}]\leq \sum_{\substack{u\notin \Lambda_m\\v\sim u}}\frac{\langle \sigma_{\mathbf{e}_1}\sigma_v\rangle\langle \sigma_v\sigma_y\rangle}{\langle \sigma_{\mathbf{e}_1}\sigma_y\rangle}\frac{\langle \sigma_{0}\sigma_u\rangle\langle \sigma_u\sigma_x\rangle}{\langle \sigma_{0}\sigma_x\rangle}\leq C\sum_{u\notin \Lambda_m}\frac{\langle \sigma_{0}\sigma_u\rangle\langle \sigma_u\sigma_y\rangle}{\langle \sigma_{0}\sigma_y\rangle}\frac{\langle \sigma_{0}\sigma_u\rangle\langle \sigma_u\sigma_x\rangle}{\langle \sigma_{0}\sigma_x\rangle},
\end{equation}
where we used \eqref{eq:up to constant} in the second inequality.
A convolution estimate presented in Appendix \ref{appendix:convolution} (which is licit thanks to \eqref{eq:up to constant}) yields
\begin{equation}\label{eq:unaveraged4}
	\sum_{\substack{u\notin \Lambda_m\\v\sim u}}\mathbf{P}^{0x,\mathbf{e}_1y}[\mathcal B_{u,v}]\leq C\left(\frac{1}{|x-y|^{d-4}}\vee \frac{1}{m^{d-4}}\right).
\end{equation}

$\bullet$ Recall that $\mathcal G_1\subset \mathcal G_1(1)\cup \mathcal G_1(2)$. By \eqref{eq:bound g1(1)}, one has
\begin{align}
	\mathbf P^{0x,\mathbf{e}_1y}[\mathcal G_1(1)]=\mathbf P^{0x}[\mathcal G_1(1)]\leq \sum_{\substack{a\in \partial \Lambda_k\\b \in \partial \Lambda_M}}\frac{\langle \sigma_{0}\sigma_a\rangle\langle \sigma_a\sigma_b\rangle\langle \sigma_b\sigma_{x}\rangle}{\langle \sigma_0\sigma_x\rangle}&\leq C\sum_{\substack{a\in \partial \Lambda_k\\b \in \partial \Lambda_M}}\langle \sigma_{0}\sigma_a\rangle\langle \sigma_a\sigma_b\rangle\langle \sigma_b\sigma_{x}\rangle \notag
	\\&\leq C\frac{k^{(d-1)\delta A}}{k^{d-3}},
\end{align}
where in the second inequality, we used \eqref{eq:up to constant} to argue that $\langle \sigma_b\sigma_x\rangle\leq C\langle \sigma_0\sigma_x\rangle$, and where we used \eqref{eq: bound F_1} in the third inequality. A similar bound holds for $\mathbf P^{0x,\mathbf{e}_1y}[\mathcal G_1(2)]$, so that
\begin{equation}
	\mathbf P^{0x,\mathbf{e}_1y}[\mathcal G_1]\leq C\frac{k^{(d-1)\delta A}}{k^{d-3}}.\label{eq:unaveraged5}
\end{equation}

$\bullet$ Finally, using \eqref{eq: proof localisation f2 (1°}, Claim \ref{claim2}, and \eqref{eq: bound f2}, we get
\begin{equation}\label{eq:unaveraged6}
	\mathbf P^{0x,\mathbf{e}_1y}[\mathcal G_2]\leq C\frac{k^{\delta(d-1)}}{k^{A\delta(d-3)}}.
\end{equation}
Plugging \eqref{eq:unaveraged4}, \eqref{eq:unaveraged5}, and \eqref{eq:unaveraged6} in \eqref{eq:unaveraged2} yields
\begin{equation}\label{eq:unaveraged7}
	\mathbf P^{0x,\mathbf{e}_1y}[\mathcal B]\leq C\left(\frac{1}{|x-y|^{d-4}}\vee \frac{1}{m^{d-4}}\right)+Cm^d\left(\frac{k^{(d-1)\delta A}}{k^{d-3}}+\frac{k^{\delta(d-1)}}{k^{A\delta(d-3)}}\right).
\end{equation}
Combining \eqref{eq:unaveraged7} with \eqref{eq:unaveraged0} concludes the proof.
\end{proof}

\section{Proof of the mixing property}\label{section: mixing d=3}
 
 In this section, we prove Theorem \ref{thm: mixing for currents}. We follow the strategy of \cite{AizenmanDuminilTriviality2021} {\color{black} and refer to this paper for more details}. The argument only requires a modification in one of the steps (namely  \cite[Lemma~6.7]{AizenmanDuminilTriviality2021}), but we provide a full proof for sake of completeness. One essential ingredient to the proof will be the following result.
 \begin{Thm}[Absence of infinite cluster at $\beta_c$, \cite{AizenmanDuminilSidoraviciusContinuityIsing2015}]\label{thm: absence of infinite cluster at criticality} Let $d\geq 3$. Then,
 \begin{equation}
 	\phi_{\beta_c}[0\connect{}\infty]=0.
 \end{equation}
 \end{Thm}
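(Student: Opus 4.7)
The plan is to reduce the vanishing of $\phi_{\beta_c}[0\connect{}\infty]$ to the vanishing of the spontaneous magnetisation $m^*(\beta_c)$, and then to contradict $m^*(\beta_c) > 0$ using the random current representation combined with the infrared bound \eqref{eq: IRB}. The overall strategy is the one developed by Aizenman, Duminil-Copin and Sidoravicius.

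\textbf{Reduction to magnetisation.} By the FK-coupling \eqref{eq: consequences of ESC} one has $m^*(\beta_c) = \phi^1_{\beta_c}[0\connect{}\infty]$, and since $\phi^0_\beta = \phi^1_\beta = \phi_\beta$ for all $\beta > 0$ (mentioned earlier in the text, via the results on Gibbs measures of the FK-Ising model), the claim reduces to proving $m^*(\beta_c) = 0$.

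\textbf{Reformulation via random currents.} Assume for contradiction that $m^*(\beta_c) > 0$. Using the random current representation of Section \ref{section: rcr}, together with the classical ghost-vertex trick for handling $+$ boundary conditions, one rewrites $m^*(\beta)^2$ as the infinite-volume limit of $\langle \sigma_0\sigma_x\rangle^+_\beta - \langle \sigma_0\sigma_x\rangle_\beta$ as $|x| \to \infty$. Via the switching lemma, this difference admits a representation as the probability that in an appropriate double random current with sources $\{0,x\}$, the cluster of $0$ reaches the boundary of the box, i.e. ``connects to infinity'' in the infinite-volume limit.

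\textbf{Key estimate.} The core technical step is to establish a geometric inequality of the form
\begin{equation}
m^*(\beta_c)^2 \le C\,\limsup_{|x|\to\infty}\langle \sigma_0\sigma_x\rangle_{\beta_c},
\end{equation}
for some constant $C$. The idea is that the existence of an infinite cluster in the $+$ state forces, via a finite-energy-type argument on the free-boundary double random current, long connections that can be converted by a source-swap (switching lemma) into long-distance contributions to the free two-point function. This is where the subtle analysis of the geometry of double random currents enters, and where one exploits translation invariance of the free measure.

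\textbf{Conclusion.} Once the key estimate is in place, the infrared bound \eqref{eq: IRB} guarantees $\langle \sigma_0\sigma_x\rangle_{\beta_c} \le C|x|^{-(d-2)} \to 0$ as $|x|\to\infty$ (this is the only place where the assumption $d\geq 3$ is used). Combined with the key estimate, this forces $m^*(\beta_c) = 0$, contradicting the assumption. The main obstacle is of course the key estimate itself: converting a putative infinite $+$-cluster into long-range connections of the \emph{free} two-point function is not straightforward, as one must simultaneously handle the ``ghost'' sources coming from the $+$ boundary condition and rule out the possibility that the infinite cluster is carried only by these ghost connections.
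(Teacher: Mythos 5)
Your reduction to $m^*(\beta_c)=0$ via \eqref{eq: consequences of ESC} and $\phi^0_\beta=\phi^1_\beta$ is fine, and your overall plan (contradict $m^*(\beta_c)>0$ using the infrared bound \eqref{eq: IRB}, which indeed only kills long-range order in the \emph{free} two-point function) is the correct strategy of Aizenman--Duminil-Copin--Sidoravicius. But note first that the paper does not prove this theorem at all: it is imported verbatim from \cite{AizenmanDuminilSidoraviciusContinuityIsing2015}, so there is no internal argument to compare with; your proposal must therefore stand on its own as a proof, and it does not.

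The genuine gap is your ``key estimate'' $m^*(\beta_c)^2\leq C\limsup_{|x|\to\infty}\langle\sigma_0\sigma_x\rangle_{\beta_c}$, which you assert via a ``finite-energy-type argument'' and a ``source-swap''. That estimate \emph{is} the theorem: transferring a putative infinite cluster of the $+$ state into a uniform lower bound on the free two-point function is the entire content of \cite{AizenmanDuminilSidoraviciusContinuityIsing2015}, and none of its ingredients appear in your sketch. Concretely, one needs (i) to show that $m^*(\beta_c)>0$ forces percolation of the \emph{sourceless} double random current $\n_1+\n_2$ under the free infinite-volume measure (this uses the ghost-vertex representation and the switching lemma, not just the limit $\lim_{|x|\to\infty}\langle\sigma_0\sigma_x\rangle^+_{\beta_c}=m^*(\beta_c)^2$); (ii) uniqueness of the infinite cluster of $\n_1+\n_2$, where the Burton--Keane argument cannot be quoted off the shelf because sourceless currents are only insertion-tolerant --- deleting an odd edge violates the parity constraint, so the usual ``finite energy'' modification fails and a genuinely new argument is required; and (iii) a way to deduce from percolation that $\langle\sigma_x\sigma_y\rangle_{\beta_c}^2=\mathbf P^{\emptyset,\emptyset}_{\beta_c}[x\connect{\n_1+\n_2\:}y]$ is bounded below uniformly in $x,y$, which needs $\mathbf P^{\emptyset,\emptyset}_{\beta_c}[x\connect{}\infty,\,y\connect{}\infty]$ bounded below without any FKG inequality for the double-current measure. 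Your sketch acknowledges the difficulty but supplies no mechanism for any of these three steps, so as written it is an outline of the cited result rather than a proof of it; the only honest short proof in the context of this paper is the citation the author gives.
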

We begin by recalling a fundamental result: the existence of \emph{regular} scales.

\begin{Def}[Regular scales] Fix $c,C>0$. An annular region \textup{Ann}$(n/2,8n)$ is said to be \emph{$(c,C)$-regular} if the following properties hold:
\begin{enumerate}
    \item[$(\mathbf{P1})$] for every $x,y\in \textup{Ann}(n/2,8n)$, $\langle \sigma_0\sigma_y\rangle_\beta\leq C\langle \sigma_0\sigma_x\rangle_\beta$,
    \item[$(\mathbf{P2})$] for every $x,y\in \textup{Ann}(n/2,8n)$, $|\langle \sigma_0\sigma_x\rangle_\beta-\langle \sigma_0\sigma_y\rangle_\beta|\leq \frac{C|x-y|}{|x|}\langle \sigma_0\sigma_x\rangle_\beta$,
    \item[$(\mathbf{P3})$] $\chi_{2n}(\beta)-\chi_n(\beta)\geq c\chi_n(\beta)$,
    \item[$(\mathbf{P4})$] for every $x\in \Lambda_n$ and $y\notin \Lambda_{Cn}$, $\langle \sigma_0\sigma_y\rangle_\beta\leq \frac{1}{2}\langle \sigma_0\sigma_x\rangle_\beta$.
    \end{enumerate}
A scale $k$ is said to be \emph{regular} if $n=2^k$ is such that $\textup{Ann}(n/2,8n)$ is $(c,C)$-regular, a vertex $x \in \mathbb Z^d$ is said to be \emph{in a regular scale} if it belongs to an annulus $\textup{Ann}(n,2n)$ with $n=2^k$ and $k$ a regular scale.
\end{Def}
{\color{black}
Recall that the correlation length $\xi(\beta)$ was defined in \eqref{eq:correlation length}. It diverges as $\beta$ approaches $\beta_c$ (see \cite[Theorem~4.4]{SimonInequalityIsing1980}).
}

\begin{Thm}[Existence of regular scales, {\cite[Theorem~5.12]{AizenmanDuminilTriviality2021}}]\label{thm: existence reg scales} Let $d\geq 3$. Let $\gamma>2$. There exist $\bfc,\bfC>0$ such that, for every $\beta\leq\beta_c$, and every $1\leq n^\gamma\leq N\leq \xi(\beta)$, there are at least $\bfc\log_2\left(\frac{N}{n}\right)$ $(\bfc,\bfC)$-regular scales between $n$ and $N$.
	
\end{Thm}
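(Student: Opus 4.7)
The plan is to follow the strategy of \cite[Section~5]{AizenmanDuminilTriviality2021}: show that each of the four properties (P1)--(P4) fails only on a sparse subset of the dyadic scales in $[\log_2 n, \log_2 N]$, and conclude by a pigeonhole argument. The assumption $n^{\gamma}\leq N$ with $\gamma>2$ provides the slack needed to absorb the $O(1)$ boundary scales on which the various scale comparisons (e.g.\ comparing $\Lambda_n$ with $\Lambda_{\bfC n}$ in (P4)) cannot be run. If each property can be shown to fail on at most a $\delta$-fraction of scales, then choosing $\bfc,\bfC$ so that $4\delta+\bfc<1$ yields at least $\bfc\log_2(N/n)$ $(\bfc,\bfC)$-regular scales in $[n,N]$.

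The easiest property is (P3). Using the infrared bound \eqref{eq: IRB} together with the constraint $N\leq\xi(\beta)$, one obtains a polynomial lower bound of the form $\chi_N(\beta)/\chi_n(\beta)\gtrsim(N/n)^{\alpha}$ for some $\alpha>0$. If (P3) failed on more than a $(1-\varepsilon)$-fraction of scales in $[\log_2 n,\log_2 N]$, iterating $\chi_{2\cdot 2^k}(\beta)-\chi_{2^k}(\beta)<\bfc\,\chi_{2^k}(\beta)$ on these bad scales (and using the trivial bound $\chi_{2n}(\beta)\leq 2^d\chi_n(\beta)$ on the remaining ones) would yield $\chi_N(\beta)\leq (1+\bfc)^{(1-\varepsilon)\log_2(N/n)}\cdot 2^{d\varepsilon\log_2(N/n)}\chi_n(\beta)$; for $\bfc,\varepsilon$ small enough this contradicts the polynomial lower bound, so (P3) must hold on a positive-density subset of scales.

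For (P1), (P2), and (P4), the key tool is the Messager--Miracle-Sol\'e (MMS) inequality, which gives monotonicity of $x\mapsto\langle\sigma_0\sigma_x\rangle_\beta$ along axis directions. Combined with the IRB this controls oscillations: (P1) holds at any scale $n$ where the monotone sequence $m\mapsto\langle\sigma_0\sigma_{m\mathbf{e}_1}\rangle_\beta$ varies by at most a constant factor across $m\in[n/2,8n]$, and (P2) follows from (P1) by a telescoping argument along axis-parallel paths. For (P4), one notes that along each axis the sequence $\langle\sigma_0\sigma_{m\mathbf{e}_1}\rangle_\beta$ must drop by at least a factor of two between some $n$ and $\bfC n$ for most scales $n$; otherwise the two-point function would fail to decay to zero, contradicting Theorem~\ref{thm: absence of infinite cluster at criticality}. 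A standard averaging argument then bounds the density of bad scales for each of (P1), (P2), (P4).

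The main obstacle lies in dimension three, where the infrared bound $\langle\sigma_0\sigma_x\rangle_\beta\leq C|x|^{-1}$ is not matched by a sharp lower bound of the same order, so the IRB alone is too weak to propagate quantitative oscillation control through scales. The $d\geq 4$ argument of \cite{AizenmanDuminilTriviality2021} exploits the sharp IRB to obtain a quantitative version of (P4); in $d=3$ I would substitute the qualitative input $\langle\sigma_0\sigma_x\rangle_\beta\to 0$ coming from Theorem~\ref{thm: absence of infinite cluster at criticality}, at the cost of losing the quantitative density of regular scales. This exchange is exactly the modification the author signals in the paragraph preceding the statement, and it is the step I expect to require the most care.
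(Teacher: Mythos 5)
You should first note that the paper does not prove this statement at all: it is imported verbatim from \cite[Theorem~5.12]{AizenmanDuminilTriviality2021}, where it is established for all $d\geq 3$ by a quantitative multi-scale argument (dyadic pigeonhole using the infrared bound \eqref{eq: IRB}, the MMS inequalities, and a Simon--Lieb type lower bound $\langle\sigma_0\sigma_x\rangle_\beta\gtrsim |x|^{-(d-1)}$ valid for $|x|\leq\xi(\beta)$). Your broad outline --- controlling how often each of $(\mathbf{P1})$--$(\mathbf{P4})$ can fail along dyadic scales and pigeonholing --- is in the spirit of that proof. But your final paragraph, which you yourself identify as the crux, rests on a misreading of the paper and would not prove the theorem as stated. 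The modification that the author signals for $d=3$ (trading quantitative bounds for the qualitative input $m^*(\beta_c)=0$, i.e.\ Theorem \ref{thm: absence of infinite cluster at criticality}) concerns the mixing statement, Theorem \ref{thm: mixing for currents}, and more precisely the step corresponding to \cite[Lemma~6.7]{AizenmanDuminilTriviality2021} (Lemma \ref{technical lemma mixing} here); it does \emph{not} concern the existence of regular scales. For regular scales no sharp lower bound of order $|x|^{-(d-2)}$ is needed: the hypothesis $N\geq n^\gamma$ with $\gamma>2$ is exactly the device that compensates the mismatch between the upper bound $|x|^{-(d-2)}$ and the lower bound $|x|^{-(d-1)}$, and $\gamma>2=\tfrac{d-1}{d-2}$ is precisely the threshold for $d=3$. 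Moreover, the statement you must prove is quantitative and uniform: at least $\bfc\log_2(N/n)$ regular scales, for \emph{every} $\beta\leq\beta_c$ and every window $n^\gamma\leq N\leq\xi(\beta)$. A plan that ``loses the quantitative density of regular scales'' in $d=3$, or that invokes $\phi_{\beta_c}[0\connect{}\infty]=0$ (a statement at $\beta_c$ only, with no uniformity in $\beta$ or in the scale window), simply proves a weaker assertion than Theorem \ref{thm: existence reg scales}.

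A second, more technical gap is the bookkeeping ``each property fails on at most a $\delta$-fraction of scales, choose $4\delta+\bfc<1$.'' For $(\mathbf{P1})$--$(\mathbf{P2})$ one can indeed make the bad fraction small by taking $\bfC$ large, since the total multiplicative drop of the two-point function over $[n,N]$ is polynomially bounded. But for $(\mathbf{P3})$ your own argument only yields a \emph{positive} density of good scales (roughly of order $1/d$, since the trivial per-scale growth bound is $2^d$), not a $1-\delta$ density; and similarly for $(\mathbf{P4})$ the guaranteed density of scales with a factor-two drop depends on $\gamma$ and $d$ and can be far from $1$ (the drop of the profile may be concentrated on few scales, subject only to the constraint coming from the upper and lower pointwise bounds). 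Hence a naive union bound over the four properties does not close, and the intersection of good scales has to be extracted more carefully --- this is where the hypothesis $n^\gamma\leq N\leq\xi(\beta)$ genuinely enters, and where the constants $\bfc,\bfC$ acquire their dependence on $d$ and $\gamma$. If you want a complete argument you should follow \cite[Section~5]{AizenmanDuminilTriviality2021} rather than reprove it; within the present paper the correct thing to say is simply that the theorem is quoted from there.
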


We are now equipped to prove Theorem \ref{thm: mixing for currents}. Below, $\bfc,\bfC$ are given by Theorem \ref{thm: existence reg scales}.

Let $d\geq 3$ and $\varepsilon>0$. Fix $n\geq 1$, $s\geq 1$, and $t$ satisfying $1\leq t \leq s$. {\color{black}Let $N\geq n$ to be taken large enough. Introduce integers $m,M$ such that $n\leq m\leq M\leq N$.

 Let $\beta\leq \beta_c$. In the proof, we will need many regular scales to ensure that the probabilities of interest ``mix'' with a sufficiently small error. As seen in the statement of Theorem \ref{thm: existence reg scales}, for a given $\beta$ we may ensure the existence of at most $\bfc \log_2(\xi(\beta))$ many such scales. This explains the necessity to take $\beta$ sufficiently close to $\beta_c$ below.}
 
For $\mathbf{x}=(x_1,\ldots,x_t)$ and $\mathbf{y}=(y_1,\ldots,y_t)$, we define
\begin{equation}
    \mathbf{P}^{\mathbf{xy}}_\beta:=\mathbf{P}_\beta^{x_1y_1,\ldots,x_ty_t,\emptyset,\ldots,\emptyset},\qquad \mathbf{P}_\beta^{\mathbf{xy},\emptyset}:=\mathbf{P}_\beta^{\mathbf{xy}}\otimes \mathbf{P}^{\emptyset,\ldots,\emptyset}_\beta,
\end{equation}
where $\mathbf{P}_\beta^{\emptyset,\ldots,\emptyset}$ is the law of a sum of $s$ independent sourceless currents that we denote by $(\n_1',\ldots,\n_s')$. We also let $\mathbf E_\beta^{\mathbf{xy}}$ and $\mathbf E_\beta^{\mathbf{xy},\emptyset}$ be the expectations with respect to these measures. 

{\color{black}We will need the following technical definition which can be compared to the property (\textbf{P2}) of regular scales}. If $p\geq 1$, define for $y\notin \Lambda_{2dp}$,
\begin{equation}
    \mathbb A_y(p):=\left\lbrace u \in \text{Ann}(p,2p) \: : \: \forall x \in \Lambda_{p/d}, \: \langle \sigma_x\sigma_y\rangle_\beta\leq \left(1+\bfC\frac{|x-u|}{|y-u|}\right)\langle \sigma_u\sigma_y\rangle_\beta\right\rbrace.
\end{equation}
Note that if $y$ is in a regular scale, then $\mathbb A_y(p)=\text{Ann}(p,2p)$. Moreover, the MMS inequalities ensure that this set is not empty, see \cite[Remark~6.5]{AizenmanDuminilTriviality2021}.

{\color{black}Let $\mathfrak{K}$ be the set of regular scales $k$ between $m$ and $M/2$ constructed as follows:
\begin{enumerate}
	\item[$(i)$] Let $k_1$ be the smallest $(\bfc,\bfC)$-regular scale in $\text{Ann}(m,M/2)$.
	\item[$(ii)$] Let $\ell\geq 1$ and assume that the regular scale $k_\ell$ is constructed. Choose the smallest regular scale $k>k_\ell$ satisfying both that $2^k\leq M/2$ and $2^{k}\geq \bfC 2^{k_\ell}$. The second condition is useful to apply property (\textbf{P4}) of regular scales. If $k$ exists, then set $k_{\ell+1}:=k$. Otherwise, stop the algorithm.
	\item[$(iii)$] Set $\mathfrak K:=\{k_\ell: \ell\geq 1\}$.
\end{enumerate}
}
By Theorem \ref{thm: existence reg scales}, we may choose $\beta$ sufficiently close to $\beta_c$ and $m,M,N$ large enough so that $\mathfrak{K}\neq \emptyset$. Introduce $\mathbf{U}:=\prod_{i=1}^t\mathbf{U}_i$, where
\begin{equation}
    \mathbf{U}_i:=\frac{1}{|\mathfrak{K}|}\sum_{k\in \mathfrak{K}}\frac{1}{A_{x_i,y_i}(2^k)}\sum_{u\in \mathbb{A}_{y_i}(2^k)}\mathds{1}\{u\connect{\n_i+\n_i'\:}x_i\},
\end{equation}
and,
\begin{equation}
    a_{x,y}(u):=\frac{\langle \sigma_x\sigma_u\rangle_\beta\langle\sigma_u\sigma_y\rangle_\beta}{\langle \sigma_x\sigma_y\rangle_\beta},\qquad A_{x,y}(p):=\sum_{u\in \mathbb A_y(p)}a_{x,y}(u).
\end{equation}
Using the switching lemma \eqref{eq: switching lemma}, we have that $\mathbf{E}_\beta^{\mathbf{xy},\emptyset}[\mathbf{U}]=1$.
The following concentration inequality is a consequence of the definition of $\mathbb A_{y_i}(2^k)$ and the properties of regular scales, and is still valid when $d=3$ (see \cite[Proposition~6.6]{AizenmanDuminilTriviality2021}). The assumption on $m/n$ and $N/M$ below is justified by the definition of $\mathbb A_y(p)$.
\begin{Lem}[Concentration of $\mathbf{U}$]\label{lem: concentration of U} Assume that $x_i\in \Lambda_n$ and $y_i\notin \Lambda_N$ for all $1\leq i \leq t$. Additionally, assume that $m\geq 2dn$ and $N\geq 2dM$. Then, there exists $C=C(d,s)>0$ such that,
\begin{equation}
    \mathbf{E}_\beta^{\mathbf{xy},\emptyset}[(\mathbf{U}-1)^2]\leq \frac{C^2}{|\mathfrak{K}|}.
\end{equation}
\end{Lem}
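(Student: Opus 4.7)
The proof rests on three ingredients: independence of the $t$ factors $\mathbf{U}_i$ under $\mathbf{P}_\beta^{\mathbf{xy},\emptyset}$, a switching identity giving $\mathbf{E}_\beta^{x_iy_i,\emptyset}[\mathbf{U}_i]=1$, and a near-factorisation of the joint probabilities $\mathbf{P}_\beta^{x_iy_i,\emptyset}[u\connect{\n_i+\n_i'\:}x_i,\,u'\connect{\n_i+\n_i'\:}x_i]$ when $u,u'$ sit in well-separated regular annuli. The first is immediate, since $\mathbf{U}_i$ depends only on the independent pair $(\n_i,\n_i')$. The second follows from applying Lemma \ref{lem: switching lemma} with $S_1=\{x_i,u\}$, $S_2=\{u,y_i\}$ and $F\equiv 1$, which yields $\mathbf{P}_\beta^{x_iy_i,\emptyset}[u\connect{\n_i+\n_i'\:}x_i]=a_{x_i,y_i}(u)$ and hence $\mathbf{E}_\beta^{x_iy_i,\emptyset}[\mathbf{U}_i]=1$. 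Since $\mathbf{E}_\beta^{\mathbf{xy},\emptyset}[(\mathbf{U}-1)^2]=\prod_{i=1}^t \mathbf{E}_\beta^{x_iy_i,\emptyset}[\mathbf{U}_i^2]-1$, the problem reduces to proving $\mathbf{E}_\beta^{x_iy_i,\emptyset}[\mathbf{U}_i^2]\leq 1+C(d)/|\mathfrak{K}|$ for each $i$; the lemma then follows from $t\leq s$ and $(1+\delta)^t-1\leq Cs\delta$.

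Fix $i$ and drop indices, writing $q(u):=\mathds{1}\{u\connect{\n+\n'\:}x\}$. Expanding $\mathbf{U}_i^2$ produces a double sum over pairs $(k,k')\in\mathfrak{K}^2$ and $(u,u')\in\mathbb{A}_y(2^k)\times\mathbb{A}_y(2^{k'})$, which I split into diagonal ($k=k'$) and off-diagonal parts. On the off-diagonal part, the construction of $\mathfrak{K}$ gives the scale separation $2^{|k-k'|}\geq C_0$; I would then combine a second switching (with $S_1=\{x,u'\}$, $S_2=\{u',y\}$, applied on top of the first) with the defining property of $\mathbb{A}_y(2^{\max(k,k')})$ (which captures $(\mathbf{P2})$ at the outer scale) and the infrared bound \eqref{eq: IRB} (used to sum over meeting points of the two clusters) to obtain
\[
\mathbf{P}_\beta^{xy,\emptyset}[q(u)q(u')=1]\leq a_{x,y}(u)\,a_{x,y}(u')\,(1+\eta_{k,k'}),
\]
with $\eta_{k,k'}\to 0$ as $|k-k'|\to\infty$. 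After summation against the normalisations $1/(A_{x,y}(2^k)A_{x,y}(2^{k'}))$, the off-diagonal contribution to $\mathbf{E}_\beta^{xy,\emptyset}[\mathbf{U}_i^2]$ is thus $1+O(1/|\mathfrak{K}|)$. For the diagonal contribution, a classical triangle estimate (via Proposition \ref{prop: chain rule} and \eqref{eq: IRB}) yields $\sum_{u,u'\in\mathbb{A}_y(2^k)}\mathbf{P}_\beta^{xy,\emptyset}[q(u)q(u')=1]\leq C\,A_{x,y}(2^k)^2$ uniformly in regular scales $k$, so the total diagonal contribution is $O(1/|\mathfrak{K}|)$.

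The main obstacle is the uniform control of $\eta_{k,k'}\to 0$ as $|k-k'|$ grows, uniformly in $\beta\leq\beta_c$. For $d\geq 4$, the quantitative infrared bound \eqref{eq: IRB} already yields a polynomial rate, as in \cite{AizenmanDuminilTriviality2021}. For $d=3$, the sharpness of \eqref{eq: IRB} is unknown; the key replacement is the qualitative input $m^\ast(\beta_c)=0$ from Theorem \ref{thm: absence of infinite cluster at criticality}, which is strong enough to force $\eta_{k,k'}\to 0$ once the scales are sufficiently separated, at the cost of losing any quantitative rate. This is precisely the point at which the argument of \cite{AizenmanDuminilTriviality2021} has to be modified in three dimensions.
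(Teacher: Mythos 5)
Your skeleton (independence of the $t$ factors, $\mathbf{E}_\beta^{\mathbf{xy},\emptyset}[\mathbf U]=1$ by switching, reduction to a second-moment bound split into diagonal and off-diagonal pairs of scales) is the standard route; note that the paper does not reprove this lemma at all but quotes it from \cite[Proposition~6.6]{AizenmanDuminilTriviality2021}, remarking that only the definition of $\mathbb A_{y_i}(2^k)$ and the properties of regular scales enter. (A cosmetic slip: with $S_1=\{x_i,u\}$, $S_2=\{u,y_i\}$ the switching lemma produces the indicator of $\mathcal F_{\{u,y_i\}}$, i.e.\ of $u\connect{\n_i+\n_i'\:}y_i$; to get the connection to $x_i$ you should swap the roles of the two source sets.) The genuine gap is in your off-diagonal estimate. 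If all you control is $\eta_{k,k'}\to 0$ as $|k-k'|\to\infty$ with no rate, then $\tfrac{1}{|\mathfrak K|^2}\sum_{k\neq k'}\eta_{k,k'}$ is only $o(1)$ as $|\mathfrak K|\to\infty$, not $O(1/|\mathfrak K|)$ (take e.g.\ $\eta_{k,k'}=1/\log(2+|k-k'|)$), so you do not reach the asserted bound $C^2/|\mathfrak K|$. What is actually needed is $\sup_k\sum_{k'}\eta_{k,k'}\leq C$ uniformly in $\beta\leq\beta_c$, i.e.\ summable (essentially geometric) decay in the number of regular scales separating $k$ from $k'$; this is precisely what the regularity properties $(\mathbf{P1})$--$(\mathbf{P4})$ (notably $(\mathbf{P4})$), together with the separation $2^{k'}\geq C_0 2^k$ built into the definition of $\mathfrak K$, provide, and they are available for all $d\geq 3$ by Theorem \ref{thm: existence reg scales}. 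The diagonal bound $\sum_{u,u'\in\mathbb A_y(2^k)}\mathbf P_\beta^{xy,\emptyset}[q(u)q(u')=1]\leq C A_{x,y}(2^k)^2$ likewise rests on these regularity properties (a pointwise bound such as $\min(a(u),a(u'))$ does not sum to $CA^2$; Cauchy--Schwarz goes the wrong way), though this part is fixable along the lines you indicate.

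Your diagnosis of the three-dimensional issue is misplaced. This concentration lemma needs no modification when $d=3$ and, in particular, no appeal to $m^*(\beta_c)=0$: its quantitative input is the existence and properties of regular scales, not the sharpness of \eqref{eq: IRB}. The step of the mixing argument that genuinely has to be changed in $d=3$ is Lemma \ref{technical lemma mixing}, where the crossing probabilities controlling $\mathcal G(\mathbf u)^c$ are bounded using $\phi_{\beta_c}[0\connect{}\infty]=0$ (Theorem \ref{thm: absence of infinite cluster at criticality}), and where accordingly only a qualitative $\varepsilon$-bound is claimed. As written, your plan for $d=3$ (``losing any quantitative rate'') is self-defeating for the present statement, since the inequality you are asked to prove is itself quantitative in $|\mathfrak K|$.
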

We work under the assumptions of Lemma \ref{lem: concentration of U}. Using the Cauchy--Schwarz inequality together with Lemma \ref{lem: concentration of U}, 
\begin{equation}\label{eq preuve 1}
\left|\mathbf{P}_\beta^{\mathbf{xy}}[E\cap F]-\mathbf E_\beta^{\mathbf{xy},\emptyset}\big[\mathbf{U}\cdot\mathds{1}\{(\n_1,\ldots,\n_s)\in E\cap F\}\big]\right|\leq \Big(\mathbf{E}_\beta^{\mathbf{xy},\emptyset}[(\mathbf{U}-1)^2]\Big)^{\tfrac{1}{2}}\leq \frac{C}{\sqrt{|\mathfrak{K}|}}. 
\end{equation}

We now introduce the proper event which will allow us to ``decouple'' the events $E$ and $F$.

\begin{Def}\label{def: good event mixing}
Let $\mathbf{u}=(u_1,\ldots,u_t)$ with $u_i\in \textup{Ann}(m,M)$ for every $i$. The event $\mathcal G(u_1,\ldots,u_t)=\mathcal G(\mathbf{u})$ is defined as follows: for every $i\leq s$, there exists $\mathbf{k}_i\leq \n_i+\n_i'$ such that $\mathbf{k}_i=0$ on $\Lambda_n$, $\mathbf{k}_i=\n_i+\n_i'$ outside $\Lambda_N$, $\partial \mathbf{k}_i=\lbrace u_i,y_i\rbrace$ for $i\leq t$, and $\partial \mathbf{k}_i=\emptyset$ for $t<i\leq s$. 
\end{Def}
{\color{black}Observe that under the occurrence of $\mathcal G(\mathbf{u})$, one has $u_i\connect{\n_i+\n_i'\:}y_i$ for every $1\leq i \leq t$.}
By the switching principle (see \cite[Lemma~2.1]{AizenmanDuminilTassionWarzelEmergentPlanarity2019}),\begin{multline}
    \mathbf P_\beta^{\mathbf{x}\mathbf{y},\emptyset}\Big[\{(\n_1,\ldots,\n_s)\in E\cap F\}\cap \:\mathcal G(\mathbf{u})\Big]\\=\Big(\prod_{i=1}^ta_{x_i,y_i}(u_i)\Big)\mathbf P_\beta^{\mathbf{xu},\mathbf{uy}}\left[(\n_1,\ldots,\n_s)\in E, (\n_1',\ldots,\n_s')\in F, \:\mathcal G(\mathbf{u})\right].
\end{multline}
The identity,
\begin{multline}
    \mathbf P_\beta^{\mathbf{xu},\mathbf{uy}}[(\n_1,\ldots,\n_s)\in E, (\n_1',\ldots,\n_s')\in F]\\=\mathbf P_\beta^{\mathbf{xu}}[(\n_1,\ldots,\n_s)\in E]\mathbf P_\beta^{\mathbf{uy}}[(\n_1',\ldots,\n_s')\in F]
\end{multline}
motivates us to prove that under $\mathbf{P}_\beta^{\mathbf{xu},\mathbf{uy}}$, the event $\mathcal G(\mathbf{u})$ occurs with high probability. This is the technical step in the proof of \cite{AizenmanDuminilTriviality2021} that requires some work to be extended to the three-dimensional case. Recall that $n\leq m \leq M \leq N$.
\begin{Lem}\label{technical lemma mixing} We keep the assumptions of Lemma \textup{\ref{lem: concentration of U}}. Assume $\tfrac{\beta_c}{2}\leq \beta\leq \beta_c$. For any fixed value $\alpha=m/M$, there exist $m,N>0$ large enough (and which only depend on $\varepsilon,\alpha,n,s,d$) such that, for every $\mathbf{u}$ with $u_i\in \mathbb A_{y_i}(2^{k_i})$ with $k_i\in\mathfrak{K}$ for $1\leq i \leq t$,
\begin{equation}
    \mathbf P_\beta^{\mathbf{x}\mathbf{y},\emptyset}\Big[\{u_i\connect{\n_i+\n_i'\:}y_i ,\: \forall 1\leq i \leq t\}\cap \:{\color{black}\mathcal G(\mathbf{u})^c}\Big]\Big(\prod_{i=1}^ta_{x_i,y_i}(u_i)\Big)^{-1}=\mathbf P_\beta^{\mathbf{xu},\mathbf{uy}}[\mathcal G(\mathbf{u})^c]\leq \frac{\varepsilon}{2}.
\end{equation}
\end{Lem}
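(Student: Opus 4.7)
The strategy will follow the scheme of \cite[Lemma~6.7]{AizenmanDuminilTriviality2021}, adapted to dimension three by replacing the quantitative infrared-type estimates used there by the qualitative absence-of-percolation statement of Theorem~\ref{thm: absence of infinite cluster at criticality}. By a union bound over $i \leq s$, it is enough to control, for each fixed index $i$, the $\mathbf{P}_\beta^{\mathbf{x}\mathbf{u},\mathbf{u}\mathbf{y}}$-probability that $\n_i + \n_i'$ admits no sub-current $\mathbf{k}_i$ with the prescribed source set, vanishing inside $\Lambda_n$ and coinciding with $\n_i + \n_i'$ outside $\Lambda_N$.

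The next (and main) step is a purely topological reduction showing that $\mathcal G(\mathbf u)^c$ forces a macroscopic connection event. More precisely, the failure of such a decomposition for coordinate $i$ implies that, in the percolation configuration induced by $\n_i + \n_i'$, the cluster of some vertex of $\Lambda_n$ must reach $\Lambda_N^c$: otherwise, one could truncate $\n_i + \n_i'$ to its edges outside $\Lambda_n$ while keeping the source $y_i$ and creating the new source $u_i$ (using the fact that $u_i \in \mathrm{Ann}(m,M)$ with $m \geq 2dn$), yielding a valid $\mathbf{k}_i$. Hence $\mathcal{G}(\mathbf{u})^c \subset \bigcup_{i \leq s}\{ \Lambda_n \connect{\n_i + \n_i'\,} \Lambda_N^c \}$.

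Once this inclusion is established, the switching principle \cite[Lemma~2.1]{AizenmanDuminilTassionWarzelEmergentPlanarity2019} applied to $\n_i + \n_i'$ (whose combined source set is $\{x_i, y_i\}$) reduces the probability of the connection event to an expectation against a configuration with different (or empty) sources, which by Proposition~\ref{prop: coupling fk rcr} is dominated by an FK-Ising connection probability. Up to constants depending only on $n$ and $s$ (coming from absorbing the sources $\{x_i, y_i\}$ into the event), one obtains
\begin{equation*}
\mathbf{P}_\beta^{\mathbf{x}\mathbf{u},\mathbf{u}\mathbf{y}}[\mathcal{G}(\mathbf{u})^c] \;\leq\; C(n,s)\,\phi_\beta[\Lambda_n \connect{} \Lambda_N^c].
\end{equation*}
By monotonicity of the FK-Ising measure in $\beta$, $\phi_\beta[\Lambda_n \connect{} \Lambda_N^c] \leq \phi_{\beta_c}[\Lambda_n \connect{} \Lambda_N^c]$ for every $\beta \leq \beta_c$, and Theorem~\ref{thm: absence of infinite cluster at criticality} gives $\lim_{N\to\infty}\phi_{\beta_c}[\Lambda_n \connect{} \Lambda_N^c] = 0$, so choosing $N$ large enough (depending only on $\varepsilon, n, s, d$) makes the bound smaller than $\varepsilon/2$ uniformly in $\beta \in [\beta_c/2, \beta_c]$.

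The main obstacle is the topological reduction of the second paragraph. In $d \geq 4$, the analogous step in \cite{AizenmanDuminilTriviality2021} exploits summable diagrammatic bounds that follow from \eqref{eq: IRB}; in $d = 3$, these bounds are lost since $\langle \sigma_0 \sigma_x\rangle_{\beta_c}$ is only assumed to decay like $C/|x|$, which is not summable. The entire difficulty is therefore to avoid any quantitative estimate and reduce the event to one handled purely by the qualitative input $\phi_{\beta_c}[0 \connect{} \infty] = 0$. This is the price for extending the argument to three dimensions: any quantitative rate in $m$ and $N$ is lost, but the bound still suffices for the asymptotic statement of Lemma~\ref{technical lemma mixing}.
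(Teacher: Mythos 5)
There is a genuine gap in your second paragraph, and it is fatal to the rest of the argument. Under the switched measure $\mathbf P_\beta^{\mathbf{xu},\mathbf{uy}}$ one has $\partial\n_i=\{x_i,u_i\}$ and $\partial\n_i'=\{u_i,y_i\}$, and every cluster of a current contains an even number of sources; hence, deterministically, $x_i$ is connected to $u_i$ in $\n_i$ and $u_i$ to $y_i$ in $\n_i'$, so $x_i\in\Lambda_n$ is \emph{always} connected to $y_i\notin\Lambda_N$ in $\n_i+\n_i'$ (the same holds under $\mathbf P_\beta^{\mathbf{xy},\emptyset}$, where $\partial(\n_i+\n_i')=\{x_i,y_i\}$). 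Consequently, for every $1\leq i\leq t$ the event $\{\Lambda_n\connect{\n_i+\n_i'\:}\Lambda_N^c\}$ has probability one: your inclusion $\mathcal G(\mathbf u)^c\subset\bigcup_{i\leq s}\{\Lambda_n\connect{\n_i+\n_i'\:}\Lambda_N^c\}$ is trivially true but yields a union bound that is at least $1$, and the claimed estimate $\mathbf P_\beta^{\mathbf{xu},\mathbf{uy}}[\mathcal G(\mathbf u)^c]\leq C(n,s)\,\phi_\beta[\Lambda_n\connect{}\Lambda_N^c]$ cannot follow (absorbing the far source $y_i$ ``into the event'' cannot cost a constant depending only on $n,s$, since $\phi_\beta[x_i\connect{}y_i]$ decays with $N$). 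Moreover the deterministic step itself is unjustified: ``truncating $\n_i+\n_i'$ to its edges outside $\Lambda_n$'' does not address the parity constraint $\partial\mathbf k_i=\{u_i,y_i\}$ — $u_i$ is not a source of the sum, and creating a source exactly at $u_i$ uses the decomposition into the pair $(\n_i,\n_i')$, not the sum alone.

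The actual proof exploits precisely what the switching buys: both sources of $\n_i$ lie in $\Lambda_M$ and both sources of $\n_i'$ lie outside $\Lambda_m$, so no crossing of the relevant annuli is forced. One uses $H_i^c\cap F_i^c\subset G_i$, where $H_i$ is the event that a cluster of $\n_i$ crosses $\textup{Ann}(M,N)$ and $F_i$ that a cluster of $\n_i'$ crosses $\textup{Ann}(n,m)$; on $H_i^c\cap F_i^c$ a valid $\mathbf k_i$ is the sum of the restriction of $\n_i$ to the clusters meeting $\Lambda_N^c$ (sourceless, vanishing on $\Lambda_M$) and of the restriction of $\n_i'$ to the clusters meeting $\Lambda_m^c$ (sources $\{u_i,y_i\}$, vanishing on $\Lambda_n$). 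The resulting bounds are not purely qualitative: $\mathbf P_\beta^{u_iy_i}[F_i]$ is split along the backbone, the event that $\Gamma(\n_i')$ crosses $\textup{Ann}(r,m)$ being controlled by the chain rule, the infrared bound and the hypothesis $u_i\in\mathbb A_{y_i}(2^{k_i})$ (which your proposal never uses), while the loop part $\n_i'\setminus\overline\Gamma(\n_i')$ crossing $\textup{Ann}(n,r)$ is bounded via Proposition \ref{prop: coupling fk rcr} by $\phi_{\beta_c}[\partial\Lambda_n\connect{}\partial\Lambda_r]$ and Theorem \ref{thm: absence of infinite cluster at criticality}; and $\mathbf P_\beta^{x_iu_i}[H_i]$ is bounded by $\phi_\beta[H_i\:|\:x_i\connect{}u_i]\leq \max_{w,v\in\Lambda_m}\phi_{\beta_c/2}[w\connect{}v]^{-1}\,\phi_{\beta_c}[\partial\Lambda_M\connect{}\partial\Lambda_N]$, which is where the hypothesis $\beta\geq\beta_c/2$ (also unused by you) enters. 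Your final appeal to $\phi_{\beta_c}[0\connect{}\infty]=0$ is in the right spirit for the sourceless and loop contributions, but the reduction to a single FK crossing of $\textup{Ann}(n,N)$ for the summed currents does not work.
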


\begin{proof} Below, the constants $C_i$ only depend on $d$ and $s$. The equality follows from an application of the switching lemma, we thus focus on the inequality. Write $\mathcal G(\mathbf{u})=\cap_{1\leq i \leq s}G_i$ (where the definition of $G_i$ is implicit). Then, $H_i^c\cap F_i^c\subset G_i$ where,
\begin{equation}
    H_i:=\lbrace \text{Ann}(M,N) \text{ is crossed by a cluster in }\n_i\rbrace,
\end{equation}
and
\begin{equation}\label{eq: def F_i}
    F_i:=\lbrace \text{Ann}(n,m) \text{ is  crossed by a cluster in }\n_i'\rbrace.
\end{equation}
Indeed, if $H_i^c\cap F_i^c$ occurs, we may define $\mathbf{k}_i$ as the sum of the restriction of $\n_i$ to the clusters intersecting $\Lambda_N^c$ and the restriction of $\n_i'$ to the clusters intersecting $\Lambda_m^c$. Introduce an intermediate scale $n\leq r \leq m$. Using a union bound, 
\begin{equation}\label{eq: prooflast0}
	\mathbf P_\beta^{\mathbf{xu},\mathbf{uy}}[\mathcal G(\mathbf{u})^c]\leq \sum_{i=1}^t\Big(\mathbf P_\beta^{x_iu_i}[H_i]+\mathbf P_\beta^{u_iy_i}[F_i]\Big)+(s-t)\Big(\mathbf P_\beta^{\emptyset}[H_s]+\mathbf P_\beta^\emptyset[F_s]\Big).
\end{equation}
We progressively fix $r,m,N$ large enough (recall that $m/M=\alpha$ is fixed).
\paragraph{Bound on $\mathbf P_\beta^{u_i y_i}[F_i]$.} Fix $1\leq i \leq t$. Notice that
\begin{equation}
	\mathbf P_\beta^{u_i y_i}[F_i]\leq \mathbf P_\beta^{u_i y_i}[\Gamma(\n_i') \textup{ crosses }\textup{Ann}(r,m)]+\mathbf P_\beta^{u_i y_i}[\n_i'\setminus \overline\Gamma(\n_i') \textup{ crosses }\textup{Ann}(n,r)],
\end{equation}
see Figure \ref{fig:mixing}.
Now, using the chain rule (Proposition \ref{prop: chain rule}),
\begin{align}
	\mathbf P_\beta^{u_i y_i}[\Gamma(\n_i') \textup{ crosses }\textup{Ann}(r,m)]&\leq \sum_{v \in \partial \Lambda_r}\frac{\langle \sigma_{u_i}\sigma_v\rangle_\beta \langle \sigma_v\sigma_{y_i}\rangle_\beta}{\langle \sigma_{u_i}\sigma_{y_i}\rangle_\beta}\notag
	\\&\leq C_1\frac{r^2}{m}\max_{v\in \partial \Lambda_r}\frac{\langle \sigma_v\sigma_{y_i}\rangle_\beta}{\langle \sigma_{u_i}\sigma_{y_i}\rangle_\beta}\notag
	\\&\leq C_2\frac{r^2}{m}\label{eq: prooflast1},
\end{align}
where we used the fact that $u\in \mathbb A_{y_i}(2^{k_i})$ on the third line.

\begin{figure}
	\begin{center}
		\includegraphics{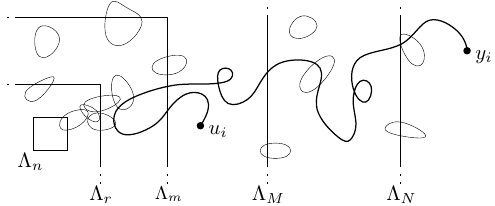}
		\caption{An illustration of a configuration $\n_i'$ contributing to the event $F_i$ introduced in \eqref{eq: def F_i}. The backbone of $\n_i'$ is the bold black line. Here, $\Gamma(\n_i')$ does not cross $\textup{Ann}(r,m)$ but $\n_i'$ crosses $\textup{Ann}(n,m)$ thanks to a string of loops crossing $\textup{Ann}(n,r)$.}
		\label{fig:mixing}
	\end{center}
\end{figure}

Then, arguing as in \eqref{eq: proof localisation f2 (2)} (see also the proof of\cite[Lemma~6.3]{PanisTriviality2023}),
\begin{equation}
	\mathbf P_\beta^{u_i y_i}[\n_i'\setminus \overline\Gamma(\n_i') \textup{ crosses }\textup{Ann}(n,r)]=\sum_{\gamma: u_i\rightarrow y_i}\mathbf P_\beta^{u_iy_i}[\Gamma(\n_i')=\gamma]\mathbf P^\emptyset_{\overline \gamma^c,\beta}[\partial \Lambda_n \connect{\n_i'\:}\partial\Lambda_r].
\end{equation}
However, by Proposition \ref{prop: coupling fk rcr}, one has 
\begin{align}
\mathbf P^\emptyset_{\overline \gamma^c,\beta}[\partial \Lambda_n \connect{\n_i'\:}\partial\Lambda_r]&\leq \phi_{\overline \gamma^c,\beta_c}^0[\partial \Lambda_n \connect{}\partial\Lambda_r]
\\&\leq \phi_{\beta_c}[\partial \Lambda_n \connect{}\partial\Lambda_{r}]\label{eq: prooflast2},
\end{align}
where {\color{black}we used the fact if $\Lambda\subset \mathbb Z^d$, then $\phi_{\beta_c}$ stochastically dominates $\phi^0_{\Lambda,\beta_c}$ (this is a consequence of the domain Markov property as seen \cite[Proposition~1.8]{DuminilLecturesOnIsingandPottsModels2019})}. Using Theorem \ref{thm: absence of infinite cluster at criticality}, we find that for $r$ large enough, one has 
\begin{equation}
\phi_{\beta_c}[\partial \Lambda_n \connect{}\partial\Lambda_{r}]\leq \frac{\varepsilon}{8s}.
\end{equation}
Combining this observation and \eqref{eq: prooflast1}, we obtain that for $m$ large enough,
\begin{equation}\label{eq: prooflast3}
	\mathbf P_\beta^{u_iy_i}[F_i]\leq \frac{\varepsilon}{4s}.
\end{equation}

\paragraph{Bound on $\mathbf P_\beta^\emptyset[F_s]$.} Using Proposition \ref{prop: coupling fk rcr}, 
\begin{equation}
	\mathbf P_\beta^\emptyset[F_s]\leq \phi_{\beta_c}[\partial \Lambda_n\connect{}\partial\Lambda_m].
\end{equation}
Using Theorem \ref{thm: absence of infinite cluster at criticality} and choosing $m$ large enough (and such that \eqref{eq: prooflast3} holds) therefore yields
\begin{equation}\label{eq: prooflast4}
	\mathbf P_\beta^\emptyset[F_s]\leq \frac{\varepsilon}{4s}
\end{equation}

\paragraph{Bound on $\mathbf P_\beta^{x_iu_i}[H_i]$.} Fix $1\leq i \leq t$. Fix $r,m$ such that the above bounds holds. Notice that this fixes the value of $M$ by assumption. Still using Proposition \ref{prop: coupling fk rcr}, one has
\begin{equation}
	\mathbf P_\beta^{x_i u_i}[H_i]{\color{black}\leq}\phi_{\beta}[H_i\:|\:x_i\connect{}u_i]\leq \max_{w,v\in \Lambda_{m}} \phi_{\beta_c/2}[w\connect{}v]^{-1} \cdot \phi_{\beta_c}[\partial \Lambda_M \connect{}\partial\Lambda_N],
\end{equation}
which can be made smaller than $\tfrac{\varepsilon}{4s}$ by using Theorem \ref{thm: absence of infinite cluster at criticality} and choosing $N>0$ large enough.

\paragraph{Bound on $\mathbf P_\beta^{\emptyset}[H_s]$.} We notice that
\begin{equation}
 \mathbf P_\beta^{\emptyset}[H_s]\leq\phi_{\beta_c}[\partial \Lambda_M\connect{} \partial \Lambda_N],
\end{equation}
which we bound similarly by $\tfrac{\varepsilon}{4s}$ for $N$ large enough.

Plugging the above bounds in \eqref{eq: prooflast0}, we obtain
\begin{equation}
	\mathbf P_\beta^{\mathbf{xu},\mathbf{uy}}[\mathcal G(\mathbf{u})^c]\leq \sum_{i=1}^t\Big(\frac{\varepsilon}{4s}+\frac{\varepsilon}{4s}\Big)+(s-t)\Big(\frac{\varepsilon}{4s}+\frac{\varepsilon}{4s}\Big)\leq \frac{\varepsilon}{2},
\end{equation}
which concludes the proof.
\end{proof}

We can now conclude.

\begin{proof}[Proof of Theorem~\textup{\ref{thm: mixing for currents}}]
Introduce the weights $\delta(\mathbf{u},\mathbf{x},\mathbf{y})$ defined by
\begin{equation}
    \delta(\mathbf{u},\mathbf{x},\mathbf{y}):=\mathds{1}\{\exists (k_1,\ldots,k_t)\in \mathfrak{K}^t,\: \mathbf{u}\in \mathbb A_{y_1}(2^{k_1})\times\ldots\times \mathbb A_{y_t}(2^{k_t})\}\prod_{i=1}^t\frac{a_{x_i,y_i}(u_i)}{|\mathfrak{K}|A_{x_i,y_i}(2^{k_i})}.
\end{equation}
Notice that
\begin{equation}
    \sum_{ \substack{(k_1,\ldots,k_t)\in \mathfrak{K}^t\\\mathbf{u}\in \mathbb A_{y_1}(2^{k_1})\times\ldots\times \mathbb A_{y_t}(2^{k_t})}}\delta(\mathbf{u},\mathbf{x},\mathbf{y})=1.
\end{equation}
We abbreviate the above sum as $\sum_{\mathbf{u}}$. Assume that $m/M=\alpha$. Let $\alpha>0$ to be chosen small enough, and let $m=m(\alpha),N=N(\alpha)$ be given by Lemma \ref{technical lemma mixing}.

Let $\mathbf{x}=(x_1,\ldots,x_t)\in (\Lambda_n)^t$ and $\mathbf{y}=(y_1,\ldots,y_t)\in (\Lambda_N^c)^t$.
Equation (\ref{eq preuve 1}) together with Lemma \ref{technical lemma mixing} yield
\begin{equation}
    \Big|\mathbf{P}_\beta^{\mathbf{xy}}[E\cap F]-\sum_{\mathbf{u}}\delta(\mathbf{u},\mathbf{x},\mathbf{y})\mathbf{P}_\beta^{\mathbf{xu}}[E]\mathbf{P}_\beta^{\mathbf{uy}}[F]\Big|\\\leq \frac{C}{\sqrt{|{\color{black}\mathfrak{K}}|}}+\frac{\varepsilon}{2}.
\end{equation}
We now use Theorem \ref{thm: existence reg scales} to argue that if $\beta$ is sufficiently close to $\beta_c$ and if $M/m$ is large enough (which affects how large $N$ is), then $|\mathfrak K|\geq {\color{black}(2C)^2}\varepsilon^{-2}$. This gives for every $\mathbf{y}\in (\Lambda_N^c)^t$,
\begin{equation}\label{eq preuve mixing 1}
	\Big|\mathbf{P}_\beta^{\mathbf{xy}}[E\cap F]-\sum_{\mathbf{u}}\delta(\mathbf{u},\mathbf{x},\mathbf{y})\mathbf{P}_\beta^{\mathbf{xu}}[E]\mathbf{P}_\beta^{\mathbf{uy}}[F]\Big|\\\leq \varepsilon.
\end{equation}
\paragraph{Proof of \eqref{eq 2 mixing}.} We begin by proving (\ref{eq 2 mixing}) when $y_i,y_i'$ are in regular scales (but not necessarily the same ones). Applying the above inequality once for $\mathbf{y}$ and once for $\mathbf{y'}$ with the event $E$ and $F=\Omega_{\mathbb{Z}^d}$,
\begin{equation}\label{eq: preuve mixing }
    \Big|\mathbf{P}_\beta^{\mathbf{xy}}[E]-\mathbf{P}_\beta^{\mathbf{xy'}}[E]\Big|
    \leq 
    \Big|\sum_{\mathbf{u}}(\delta(\mathbf{u},\mathbf{x},\mathbf{y})-\delta(\mathbf{u},\mathbf{x},\mathbf{y'}))\mathbf{P}_\beta^{\mathbf{xu}}[E]\Big|+2\varepsilon.
\end{equation}
Since all the $y_i,y_i'$ are in regular scales, one has $\mathbb{A}_{y_i}(2^{k_i})=\mathbb{A}_{y_i'}(2^{k_i})=\text{Ann}(2^{k_i},2^{k_i+1}).$ Moreover, using Property $\mathbf{(P2)}$ of regular scales, there exists $C_1=C_1(s,d)>0$ such that
\begin{equation}
    \left|\delta(\mathbf{u},\mathbf{x},\mathbf{y})-\delta(\mathbf{u},\mathbf{x},\mathbf{y'})\right|
    \leq C_1  \Big(\frac{M}{N}\Big)\delta(\mathbf{u},\mathbf{x},\mathbf{y})
    \leq \varepsilon\delta(\mathbf{u},\mathbf{x},\mathbf{y}),
\end{equation}
if $N$ is large enough. Indeed, in this configuration $\delta(\mathbf{u},\mathbf{x},\mathbf{y})$ and $\delta(\mathbf{u},\mathbf{x},\mathbf{y}')$ are both close to 
\begin{equation}
    \prod_{i\leq t}\frac{\langle \sigma_{x_i}\sigma_{u_i}\rangle}{|\mathfrak{K}|\sum_{v_i\in \textup{Ann}(2^{k_i},2^{k_i+1})}\langle \sigma_{x_i}\sigma_{v_i}\rangle}.
\end{equation}
This gives that for every $\mathbf{y},\mathbf{y}'\in (\Lambda_N^c)^t$ having all coordinates in regular scales,
\begin{equation}\label{eq: prooflast5}
	\Big|\mathbf{P}_\beta^{\mathbf{xy}}[E]-\mathbf{P}_\beta^{\mathbf{xy'}}[E]\Big|
    \leq 3\varepsilon.
\end{equation}
Note that we can increase $m,M,N$, while maintaining the value $M/m$, and choose $\beta$ even closer to $\beta_c$, to also obtain (using the exact same argument): for every $\mathbf{z},\mathbf{z}'\in (\Lambda_m^c)^t$ having all coordinates in regular scales,
\begin{equation}\label{eq: prooflast6}
	\Big|\mathbf{P}_\beta^{\mathbf{xz}}[E]-\mathbf{P}_\beta^{\mathbf{xz'}}[E]\Big|
    \leq 3\varepsilon.
\end{equation}
We now consider $\mathbf{z}=(z_1,\ldots,z_t)$ with $z_i\in \text{Ann}(m,M)$ in a regular scale. Also, pick $\mathbf{y}$ on which we do not assume anything. We have,
\begin{eqnarray*}
    \Big|\mathbf{P}_\beta^{\mathbf{xy}}[E]-\mathbf{P}_\beta^{\mathbf{xz}}[E]\Big|&=&\Big|\mathbf{P}_\beta^{\mathbf{xy}}[E]-\sum_{\mathbf{u}}\delta(\mathbf{u},\mathbf{x},\mathbf{y})\mathbf{P}_\beta^{\mathbf{xz}}[E]\Big|
    \\&\leq& 
    \Big|\mathbf{P}_\beta^{\mathbf{xy}}[E]-\sum_{\mathbf{u}}\delta(\mathbf{u},\mathbf{x},\mathbf{y})\mathbf{P}_\beta^{\mathbf{xu}}[E]\Big|+3\varepsilon    \\&\leq& 4\varepsilon,
\end{eqnarray*}
where on the second line we used \eqref{eq: prooflast6} with $(\mathbf{y},\mathbf{y}')=(\mathbf{u},\mathbf{z})$, and in the third line we used (\ref{eq preuve mixing 1}) with $F=\Omega_{\mathbb Z^d}$. This gives (\ref{eq 2 mixing}). 

\paragraph{Proof of \eqref{eq 3 mixing}.} The same argument works for (\ref{eq 3 mixing}) for every $\mathbf{x},\mathbf{x'},\mathbf{y}$, noticing that for every  regular $\mathbf{u}$ for which $\delta(\mathbf{u},\mathbf{x},\mathbf{y})\neq 0$, using once again $\mathbf{(P2)}$,
\begin{equation}
    \left|\delta(\mathbf{u},\mathbf{x},\mathbf{y})-\delta(\mathbf{u},\mathbf{x'},\mathbf{y})\right|
    \leq C_2  \left(\frac{n}{m}\right)\delta(\mathbf{u},\mathbf{x},\mathbf{y})\leq \varepsilon\delta(\mathbf{u},\mathbf{x},\mathbf{y})
\end{equation}
if we take $m$ sufficiently large.

\paragraph{Proof of \eqref{eq 1 mixing}.} To obtain (\ref{eq 1 mixing}) we repeat the same line of reasoning. We start by applying (\ref{eq preuve mixing 1}). Then, we replace each $\mathbf{P}_\beta^{\mathbf{xu}}[E]$ by $\mathbf{P}_\beta^{\mathbf{xy}}[E]$ using \eqref{eq: prooflast6}. Similarly, we replace $\mathbf P_\beta^{\mathbf{uy}}[F]$ by $\mathbf{P}_\beta^{\mathbf{xy}}[F]$. The proof follows readily.
\end{proof}

\section{Proof of Proposition~\ref{prop: prop of rc IIC}}\label{section: proof of prop iic rc}
In this last section, we use the techniques introduced in Section  \ref{section: mixing d=3} to prove Proposition \ref{prop: prop of rc IIC}.

\begin{proof}[Proof of Proposition \textup{\ref{prop: prop of rc IIC}}] 
We begin with $(i)$. The argument follows from the fact that the IIC of the measure of interest can be described by the union of the backbone and a collection of finite loops. If the IIC has two ends, then there must be an infinite path which does not use any edges of the backbone. {\color{black}Fix $x,y\in \mathbb Z^d$ and $n\geq 1$}. Reasoning again as in \eqref{eq: bound C(s,t)} {\color{black}and using \eqref{eq: IRB} gives $C_1=C_1(d)>0$ such that}  
\begin{equation}
	\mathbf P^{0x,\emptyset}_{\beta_c}[y \connect{\n_1+\n_2\setminus\overline\Gamma(\n_1)\:}\partial \Lambda_n]\leq  \sum_{u\in \partial \Lambda_n}\langle \sigma_u\sigma_y\rangle_{\beta_c}^2\leq \frac{C_1}{n^{d-3}}.
\end{equation}
Hence, by Theorem \ref{thm: IIC rcr},
\begin{equation}
	\mathbf P^{0\infty,\emptyset}[y \connect{\n_1+\n_2\setminus\overline\Gamma(\n_1)\:}\partial \Lambda_n]\leq \frac{C_1}{n^{d-3}},
\end{equation}
which immediately yields,
\begin{equation}
	\mathbf P^{0\infty,\emptyset}[y \connect{\n_1+\n_2\setminus\overline\Gamma(\n_1)\:}\infty]=0
\end{equation}
and the result.

We turn to $(ii)$.  Notice that by the switching lemma, if $x,y\in \mathbb Z^d$,
\begin{equation}
	\mathbf P_{\beta_c}^{0x,\emptyset}[0\connect{}y]=\frac{\langle \sigma_0\sigma_y\rangle_{\beta_c}\langle \sigma_y\sigma_x\rangle_{\beta_c}}{\langle \sigma_0\sigma_x\rangle_{\beta_c}}.
\end{equation}
The gradient estimate \cite[Proposition~5.9]{AizenmanDuminilTriviality2021} gives that
\begin{equation}
	\lim_{|x|\rightarrow \infty}\frac{\langle \sigma_y\sigma_x\rangle_{\beta_c}}{\langle \sigma_0\sigma_x\rangle_{\beta_c}}=1,
\end{equation}
which seems to imply \eqref{eq: connectivity in iic rcr} by Theorem \ref{thm: IIC rcr}. However, the event $\{0\connect{}y\}$ is not local\footnote{For each fixed $x$, this event can be approximated by local events using the fact that the measure $\mathbf P_{\beta_c}^{0x,\emptyset}$ does not percolate, see\cite{AizenmanDuminilSidoraviciusContinuityIsing2015}. Here we need a version of that statement which is uniform in $x$. This is why the qualitative result of \cite{AizenmanDuminilSidoraviciusContinuityIsing2015} is not enough when $d=3$.}, so we need to work a little to conclude. Let $2|y|\leq m\leq M$ to be taken large enough. We follow the argument used in the proof of Lemma \ref{technical lemma mixing} and notice that
\begin{equation}
	\mathbf P_{\beta_c}^{0x,\emptyset}[\{0\connect{\Lambda_M}y\}^c\cap \{0\connect{}y\}]\leq \mathbf P_{\beta_c}^{0x}[\mathcal H_1]+\mathbf P_{\beta_c}^{0x,\emptyset}[\mathcal H_2],
\end{equation}
where $\mathcal H_1$ and $\mathcal H_2$ are two events defined as follows:
\begin{enumerate}
	\item[-] $\mathcal H_1$: the backbone $\Gamma(\n_1)$ does two successive crossings of $\textup{Ann}(m,M)$,
	\item[-] $\mathcal H_2$: the event $\mathcal H_1$ does not occur and there exists $s\in \Lambda_m$ such that $y\connect{\n_1+\n_2\setminus\overline\Gamma(\n_1)\:}s$.
\end{enumerate}
Using the chain rule for backbones and \eqref{eq: IRB},
\begin{equation}
	\mathbf P_{\beta_c}^{0x}[\mathcal H_1]\leq \sum_{\substack{u\in \partial \Lambda_m\\v\in \partial \Lambda_M}}\frac{\langle \sigma_0\sigma_v\rangle_{\beta_c}\langle \sigma_v\sigma_u\rangle_{\beta_c}\langle \sigma_u\sigma_x\rangle_{\beta_c}}{\langle \sigma_0\sigma_x\rangle_{\beta_c}}\leq \max_{u\in \Lambda_m}\frac{\langle \sigma_u\sigma_x\rangle_{\beta_c}}{\langle \sigma_0\sigma_x\rangle_{\beta_c}} \cdot C_2\frac{m^{d-1}}{M^{d-3}},
\end{equation}
where $C_2=C_2(d)>0$.
Moroever, a similar reasoning as in \eqref{eq: bound C(s,t)} yields the existence of $C_3=C_3(d)>0$ such that,
\begin{equation}
	\mathbf P_{\beta_c}^{0x,\emptyset}[\mathcal H_2]\leq \sum_{u\in \partial \Lambda_m}\langle \sigma_u\sigma_y\rangle_{\beta_c}^2\leq \frac{C_3}{m^{d-3}},
\end{equation}
where we used \eqref{eq: IRB} in the last inequality. Taking $m=M^{1/4}$, {\color{black}and using the gradient estimate \cite[Proposition~5.9]{AizenmanDuminilTriviality2021} one more time to argue that
\begin{equation}
	\lim_{|x|\rightarrow \infty}\max_{u\in \Lambda_m}\frac{\langle \sigma_u\sigma_x\rangle_{\beta_c}}{\langle \sigma_0\sigma_x\rangle_{\beta_c}} =1,
\end{equation}
yields}
\begin{equation}
	\limsup_{M\rightarrow \infty}\limsup_{|x|\rightarrow \infty}\mathbf P_{\beta_c}^{0x,\emptyset}[\{0\connect{\Lambda_M}y\}^c]=0.
\end{equation}
The proof of \eqref{eq: connectivity in iic rcr} follows readily. 

Finally, \eqref{eq: connectivity iic rcr d>3} follows from \eqref{eq: IRB} and \cite[Theorem~1.4]{duminil2024new}.

 \end{proof}

\begin{Acknowledgements} We thank Hugo Duminil-Copin for inspiring discussions and for constant support. We thank Gordon slade for useful comments leading to the discussion below Theorem \ref{thm: value of the constant} and to Proposition \ref{prop: prop of rc IIC}. We also thank Lucas D'Alimonte, Trishen S. Gunaratnam, Yucheng Liu, Mathilde Pacholski, Alexis Prévost, and two referees for useful comments. This project has received funding from the Swiss National Science Foundation, the NCCR SwissMAP, and the European Research Council (ERC) under the European Union’s Horizon 2020 research and innovation programme (grant agreement No. 757296). \end{Acknowledgements}

\appendix
{\color{black}
\section{A convolution estimate}\label{appendix:convolution}

\begin{Prop} Let $d>4$. Let $f:\mathbb Z^d\rightarrow\mathbb R^*$. Assume that there exist $c_1,C_1>0$ such that, for every $x\in \mathbb Z^d\setminus\{0\}$,
\begin{equation}\label{eq:assumption f}
	\frac{c_1}{|x|^{d-2}}\leq f(x)\leq \frac{C_1}{|x|^{d-2}}.
\end{equation}
Then, there exist $C>0$ such that, for every $m\geq 1$, for every $x,y\in \mathbb Z^d$ with $x\neq y$ and $|x|,|y|\geq 2m$,
\begin{equation}
	\sum_{u\notin \Lambda_m}f(u)f(x-u)f(u)f(y-u)\leq C\left(\frac{1}{|x-y|^{d-4}}\vee \frac{1}{m^{d-4}}\right)f(x)f(y).
\end{equation}
\end{Prop}
\begin{proof} In this proof, the constant $C$ only depends on $d$ and may change from line to line. Let $m\geq 1$ and $x,y\in \mathbb Z^d$ be as above. Without loss of generality, we assume that $|x|\geq |y|$. We split the sum we want to estimate into three contributions: $u\in \Lambda_{|x|/2}(x)$, $u\in \Lambda_{|y|/2}(y)\setminus \Lambda_{|x|/2}(x)$, and $u\in \Lambda_m^c\setminus \Big(\Lambda_{|x|/2}(x)\cup \Lambda_{|y|/2}(y)\Big)$. We respectively denote them by $(I)$, $(II)$, and $(III)$.
\paragraph{Bound on $(I)$.} If $u\in \Lambda_{|x|/2}(x)$, then by \eqref{eq:assumption f}, one has $f(u)\leq Cf(x)^2\leq Cf(x)f(y)$ (since $|x|\geq |y|$). Moreover, using \eqref{eq:assumption f} again,
\begin{equation}\label{eq:convol1}
	\sum_{u\in \Lambda_{|x|/2}(x)}f(x-u)f(y-u)\leq \sum_{u\in \mathbb Z^d}f(x-u)f(y-u)\leq \frac{C}{|x-y|^{d-4}}.
\end{equation}
As a result,
\begin{equation}
	(I)\leq \frac{C}{|x-y|^{d-4}}f(x)f(y).
\end{equation}
\paragraph{Bound on $(II)$.} If $u\in \Lambda_{|y|/2}(y)\setminus\Lambda_{|x|/2}(x)$, then $f(x-u)\leq Cf(x)$ and $f(u)\leq Cf(y)$. Moreover, still by \eqref{eq:assumption f},
\begin{equation}\label{eq:convol2}
	\sum_{u\in \Lambda_{|y|/2}(y)\setminus\Lambda_{|x|/2}(x)}f(u)f(y-u)\leq \sum_{u\in \mathbb Z^d}f(u)f(y-u)\leq \frac{C}{|y|^{d-4}}\leq \frac{C}{m^{d-4}}.
\end{equation}
\paragraph{Bound on $(III)$.} If $u\notin \Lambda_{|x|/2}(x)\cup \Lambda_{|y|/2}(y)$, then $f(x-u)\leq Cf(x)$ and $f(y-u)\leq Cf(y)$ by \eqref{eq:assumption f}. By \eqref{eq:assumption f}, one has
\begin{equation}\label{eq:convol3}
	\sum_{u\in \Lambda_m^c\setminus\Big(\Lambda_{|x|/2}(x)\cup \Lambda_{|y|/2}(y)\Big)}f(u)^2\leq \frac{C}{m^{d-4}}.
\end{equation}
Combining \eqref{eq:convol1}, \eqref{eq:convol2}, and \eqref{eq:convol3} concludes the proof.
\end{proof}
}

\bibliographystyle{alpha}
\bibliography{biblio}
\end{document}